\definecolor{mygray}{gray}{0.75} %ight grey
\newtheorem{definition}{Definition}[section]
\newtheorem{theorem}[definition]{Theorem}
\newtheorem{proposition}[definition]{Proposition}
\newtheorem{lemma}[definition]{Lemma}
\newtheorem{remark}[definition]{Remark}
\newtheorem{assumption}{Assumption}
\newtheorem*{setting*}{Setting}
\newcommand{\norme}[1]{\left\Vert #1\right\Vert}
\newcommand{\R}{\mathbb{R}}
\newcommand{\E}{\mathbb{E}}
\newcommand{\N}{\mathbb{N}}
\newcommand{\Z}{\mathbb{Z}}
\newcommand{\di}{\mathrm{d}}
\newcommand{\eps}{\varepsilon}
\def\XX{\widehat{X}}
\def\YY{\widehat Y}
\def\aa{\hat a}
\def\C{\mathcal{C}}
\def\P{\mathbb{P}}
\begin{document}
\title[Numerical scheme for SDEs with distributional drift]{A numerical scheme for stochastic differential equations with distributional drift}
\author[T.~De Angelis, M.~Germain, E.~Issoglio]{Tiziano De Angelis, Maximilien Germain, Elena Issoglio}
\address{T.~De Angelis: School of Management and Economics, Dept. ESOMAS, University of Turin, C.so
Unione Sovietica 218bis, 10134, Torino, Italy. Collegio Carlo Alberto, P.za Arbarello 8, 10122, Torino, Italy}\email{tiziano.deangelis@unito.it}
\address{M.~Germain: LPSM, Universit\'e de Paris\newline B\^atiment Sophie Germain, Case 7012, 8 place Aur\'elie Nemours,
	75205 Paris Cedex 13, France}\email{mgermain@lpsm.paris}
\address{E.~Issoglio: Dept.\ of Mathematics ``G.\ Peano'', University of Turin, Via Carlo Alberto 10, 10123, Torino, Italy.}\email{elena.issoglio@unito.it}
\date{\today}

\begin{abstract}
In this paper we introduce a scheme for the numerical solution of one-dimensional stochastic differential equations (SDEs) whose drift belongs to a fractional Sobolev space of negative regularity (a subspace of Schwartz distributions). We obtain a convergence rate in a suitable $L^1$-norm and, as a by-product, a convergence rate for a numerical scheme applied to SDEs with drift in $L^p$-spaces with $p\in(1,\infty)$. 
\end{abstract}

\keywords{Euler-Maruyama numerical scheme, stochastic differential equations, distributional drift, rate of convergence, Haar and Faber functions, fractional Sobolev spaces}

\thanks{{\em Mathematics Subject Classification 2020}: Primary 65C30; Secondary 60H35, 65C20, 46F99.}
\thanks{{\em Acknowledgments}: T.~De Angelis gratefully acknowledges support by the EPSRC grant EP/R021201/1. Parts of this work were carried out while E.\ Issoglio and T.\ De Angelis were employed by the University of Leeds. M.\ Germain's visited the University of Leeds from May to August 2018. We thank the School of Mathematics at the University of Leeds for the hospitality. M.~Germain gratefully acknowledges support from  Erasmus+ and Universit\'e Paris-Saclay.}

\maketitle

\section{Introduction} 
The aim of our paper is to obtain a numerical algorithm (and its convergence rate) capable of approximating the solution of a one-dimensional SDE of the form
\begin{equation} \label{eq: original sde}
\di X_t = b(t,X_t)\di t + \di W_t,\qquad X_0 = x,\qquad t\in[0,T],
\end{equation}
where $W$ is a Brownian motion on a probability space $(\Omega,\mathcal F,\P)$ and $b$ is a distributional drift. In particular, $b(t)$ takes values in a fractional Sobolev space of negative order (defined in Section \ref{sec:setting}) for each $t\in [0,T]$, and $t\mapsto b(t)$ is H\"older continuous (i.e., $b\in \mathcal C^{\kappa}([0,T]; H^{-\beta_0}_{\tilde q_0, q_0} )$ for $\kappa\in(1/2,1)$, $\beta_0\in(0,1/4)$ and suitable $\tilde q_0$ and $q_0$).
Existence and uniqueness of solutions for $d$-dimensional versions of \eqref{eq: original sde} were first derived in Flandoli et al.\ \cite{Fla-Iss-Rus-2017}, where the authors give a mathematical meaning to the term $\int_0^t b(s, X_s) \di s$ by introducing the concept of \emph{virtual solution}. The latter is needed since pointwise evaluation of $b(t,\,\cdot\,)$ is meaningless. Further theoretical work on equations of a similar kind can be found, for example, in Cannizzaro and Chouk \cite{cannizzaro2018}, Delarue and Diel \cite{Delarue2016}, Flandoli et al.\ \cite{FlandoliRussoWolf}, Issoglio and Jing \cite{issoglio2019}, Issoglio and Russo \cite{Iss-Rus-2200}. Besides their theoretical interest in the context of regularisation by noise, these singular SDEs usually provide models for random irregular media. For example, Russo and Trutnau \cite{RussoTrutnau} use them in the study of singular Stochastic Partial Differential Equations (SPDEs). Other works such as \cite{cannizzaro2018,Delarue2016} explain how certain SDEs with distributional drift 
can be used to describe the infinitesimal behaviour of the so-called polymer measure (a singular measure on the space of continuous functions) with links to the celebrated KPZ equation. 
Another example of SDE with distributional drift is contained in Hu et al.\ \cite{BroxLe} which studies the so-called Brox diffusion, introduced by Brox \cite{Brox} as an example of random process in a random medium. In the framework of \cite{BroxLe} the drift has the regularity of the distributional derivative of a two-sided Brownian motion.

For practical implementation, mathematical models as the ones mentioned above require numerical schemes designed to handle distributional drifts. Our work provides results and tools in that direction. 
We devise a 2-step algorithm for the numerical solution of \eqref{eq: original sde}:  we first regularise the drift and then apply Euler-Maruyama scheme. This produces a sequence $(X^N)_{N\ge 1}$ of solutions of SDEs with a smooth drift and a sequence $(X^{N,m})_{m\ge 1}$ of corresponding Euler-Maruyama approximations for each $N\ge 1$. 
We prove in Theorem \ref{thm:global} that the scheme converges to the original virtual solution of \eqref{eq: original sde} and obtain a strong $L^1$-rate of convergence when we let $(N,m)\to\infty$ simultaneously, i.e., we obtain a rate of convergence for the limit
\[
\lim_{(N,m)\to\infty}\sup_{0\le t\le T}\E\left[\big|X_t-X^{N,m}_t\big|\right].
\]

In order to regularise the drift we first pick a sequence of functions $(b^N)_{N\ge 1}$ that converges to $b$ in the appropriate norm for the fractional Sobolev space of negative regularity (i.e., it converges in $\mathcal C^{\kappa}([0,T]; H^{-\beta_0}_{\tilde q_0, q_0} )$). Then, we further mollify the functions $b^N$ by convolution with the heat kernel (we refer to it as {\em randomisation} procedure). 
The first step removes the difficulty of working with distributions, while 
the randomisation allows us to control the convergence rate of the overall scheme as $(N,m)\to\infty$ simultaneously.
As explained in detail at the beginning of Section \ref{sec:thm2}, this approach gives us a better convergence rate than the one we would obtain if we omitted the randomisation procedure and relied upon known (tight) bounds in Euler-Maruyama schemes for SDEs with bounded measurable drift.  

Due to the distributional nature of our drift, the actual numerical implementation of the scheme is non-trivial and, in particular, the choice of the approximating functions $b^N$, in the first step of the algorithm, needs to be addressed carefully. In general, an explicit representation of $b$ can be provided in terms of an infinite sum of Haar wavelets (see Appendix \ref{sc: haar}) and we construct the sequence $(b^N)_{N\ge 1}$ by suitably truncating such series representation. The main reasons for this choice are: (i) these wavelets form a basis for the fractional Sobolev spaces of negative order which are needed to accommodate the original drift $b$; (ii) they enjoy the so-called {\em multi-resolution property}, which improves the computational efficiency of the algorithm; (iii) since Haar wavelets are piecewise constant functions, their convolution with the heat kernel only requires knowledge of the cumulative Gaussian distribution, hence requiring no additional computational effort for our randomisation procedure. Crucially, thanks to property (i) above, we are able to determine the convergence rate of $b^N$ to $b$ (see Proposition \ref{prop:Haar}).

A special case of distributional drift is obtained when $b$ is of Dirac-delta type. That leads to one-dimensional SDEs with local-time and the celebrated skew Brownian motion (see Harrison and Shepp \cite{harrison1981skew}; see also \'Etor\'e and Martinez \cite{EMskew} for the time-inhomogeneous case). Such SDEs have been widely studied in the literature, including several works on numerical approximations (see, e.g., \'Etor\'e and Martinez \cite{EMexact} and various contributions by Lejay and co-authors \cite{lejay,lejayetal,LM06}). Properties of the transition density and resolvent of the process, together with links to It\^o and McKean's theory of one-dimensional diffusions (\cite{itomckean}), enable efficient numerical methods. Those methods cannot be applied in our setting, where the process $X$ solution of \eqref{eq: original sde} is not necessarily a semi-martingale, as shown in Flandoli et al.\ \cite[Cor.\ 5.11]{FRW04}.

Except for the case of skew diffusions, our work seems to be the first to address numerical methods for a class of SDEs whose drift is merely a distribution.  This advancement on all the existing results hinges on the concept of virtual solution given by Flandoli et al. \cite{Fla-Iss-Rus-2017}, which links the SDE in \eqref{eq: original sde} to a class of partial differential equations (PDEs) with distributional drift studied in Issoglio \cite{Elena}. 
It is worth emphasising that our algorithm {\em does not} require a numerical solution of the PDE and instead it deals directly with the SDE in \eqref{eq: original sde}. Hence, the methods that we use here can be adopted to complement/extend the existing studies on numerical schemes for SDEs whose drift is a {\em function} with low regularity.

The literature on Euler-Maruyama approximation of SDEs whose drift is some function with low regularity is very vast and here we only provide a short overview. Early contributions are due to Gy\"ongy and Krylov \cite{Gyongy-Krylov-1996} who obtain convergence in probability for SDEs with continuous coefficients. A few years later Yan \cite{Yan} proves weak convergence for SDEs with particular discontinuities of the coefficients and an $L^1$-rate of convergence under the assumption of a Lipschitz-continuous drift and a H\"older-continuous diffusion coefficient, in a one-dimensional setting.
Also Halidias and Kloeden \cite{Hal-Klo} prove strong convergence in $L^2$ (but with no rate) when the coefficients allow certain types of discontinuity. 

More recent results include work by M\"uller-Gronbach and Yaroslavtseva \cite{muller2018}, who obtain an $L^p$-rate of $1/2$ (for any $p\ge 1$) for one-dimensional SDEs with discontinuous drift, and work by Neuenkirch et al.\ \cite{neuenkirch2019} where analogous results are obtained in a multi-dimensional setting with respect to an $L^2$-norm. Neuenkirch and Sz\"olgyenyi \cite{neuenkirch2019b} instead find an $L^2$-rate of up to $3/4$ for one-dimensional diffusions with possibly discontinuous drift (with Sobolev-Slobodeckij type regularity). Further related results can also be found in Leobacher and Sz\"olgyenyi \cite{leobacher2017} where an $L^2$-rate of convergence of $1/2$ is obtained for (possibly degenerate) multi-dimensional SDEs. Notice that in \cite{leobacher2017} the Euler-Maruyama scheme is applied to a process obtained as a suitable transformation of the solution of the SDE. Similar ideas were also used in another paper by the same authors (\cite{leobacher_et_al18}) to find an $L^2$-rate of convergence of $1/4$ but, differently from \cite{leobacher2017}, the convergence in \cite{leobacher_et_al18} is for the approximation of the original SDE. Numerical schemes for non-degenerate SDEs with irregular coefficients are also addressed in works by Ngo and Taguchi \cite{Ngo-Tag} (multi-dimensional setting, rate $1/4$) and \cite{ngo2017} (one-dimensional setting, rate $1/2$).

Our approach is close in spirit to the one adopted by Dareiotis and Gerencs\'er \cite{dareiotis2020}, who use the regularising effect of the Brownian noise to obtain a strong convergence rate of $1/2$ for multidimensional SDEs with continuous drift and, in the one-dimensional case, for SDEs with bounded drift. We discuss extensively differences between their approach and ours at the beginning of Section \ref{sec:thm2}.
Prior to their work, Menoukeu-Pamen and Taguchi \cite{pamen} had obtained strong rate of convergence in $L^p$ of order $ p\beta/2$ for $d$-dimensional SDEs with $\beta$-H\"older continuous coefficients.
 Finally, we would like also to mention a new approach developed by Butkovsky et al.\ \cite{butkovsky2019}, who use regularisation by noise and a so-called stochastic sewing lemma to obtain convergence rates for SDEs driven by fractional Brownian motion and a convergence rate of $1/2$ for SDEs with continuous drift and multiplicative Brownian noise.

There are also numerous results on weak convergence of Euler-Maruyama approximation of SDEs, however a detailed review falls outside the scopes of our paper. For example, when coefficients are smooth, convergence with rate up to $1$ was obtained by Bally and Talay \cite{bally1996law} (also work by Mikulevicius and Platen \cite{mikulevicius1991} contains further results in that direction). In the case of irregular drift, a scheme in two steps is analysed by Kohatsu-Higa et al. \cite{Koh-Lej-Kaz}. They first regularise the drift of their SDE and then apply Euler-Maruyama scheme to the more regular process and obtain a rate of weak convergence.

We note that a direct comparison of the rate we obtain in the case of distributional drift and the rates obtained in the various papers mentioned above is not necessarily meaningful: the methods used in those papers require that the drift be a function and do not allow easy extensions to the distributional case. 
Finally,  it is worth noticing that our results also apply to SDEs with drift in $L^p$-spaces with $p\in(1,\infty)$ (Remark \ref{rem:Lp}) for which no rate is known yet.

The paper is organised as follows. In Section \ref{sec:setting} we introduce the necessary notation, including the fractional Sobolev spaces of negative order that our drift belongs to. Then in Section \ref{sec: scheme} we introduce the numerical scheme. In Section \ref{sc: main} we present the main results of the paper, whose proofs are then provided in Sections \ref{sec:thm1} and \ref{sec:thm2}. Background material on SDEs with distributional drift, which is needed to understand our arguments of proof, is presented in Section \ref{sec:background}. 
The paper is completed by a technical appendix that accounts for important properties of Haar wavelets and a short appendix on standard estimates for the (killed) heat semigroup.

\section{Setting and numerical scheme}
\subsection{Notation}\label{sec:setting}

Here we introduce the functional analytic framework needed for the well-posedness of equation \eqref{eq: original sde}.  Throughout the paper we will use $\nabla$ and $\Delta$ for the spatial gradient and Laplacian of a function, respectively, and $\partial_t$ for its partial derivative with respect to time.

For any Banach space $(B, \|\cdot\|_B)$ we denote by  $\mathcal{C}\left([0,T];B\right)$ the space of $B$-valued continuous functions of time. This is again a Banach space when endowed with the norm $\norme{f}_{\infty,B} =  \sup_{0\leq t \leq T} \norme{f(t)}_{B}$. For future reference we also introduce on $\mathcal{C}\left([0,T];B\right)$ the family of equivalent norms
\begin{align}\label{eq:norm-rho}
\norme{f}^{(\rho)}_{\infty,B} :=  \sup_{0\leq t \leq T} e^{-\rho t}\norme{f(t)}_{B},\quad\text{for $\rho\ge 0$}.
\end{align}
For $\alpha\in(0,1)$ we   introduce the subspace $\mathcal{C}^\alpha([0,T];B)$ of functions $f\in \mathcal{C}([0,T];B)$ such that 
\begin{align}\label{eq:Hsemi}
[f]_{\alpha,B}:=\sup_{s\neq t\in[0,T]}\frac{\|f(t)-f(s)\|_B}{|t-s|^\alpha}<\infty.
\end{align}
This is also a Banach space when endowed with the norm $\norme{f}_{\alpha,B} =  \norme{f}_{\infty,B}+[f]_{\alpha,B}$.

For $1\le r\le \infty$ we have the usual $L^r(\R)$-spaces endowed with the norms $\|\cdot\|_{L^r}$ and we use the short-hand notation $L^r$. We denote by  $\mathcal{C}^{0,0}(\R)$ and $\mathcal{C}^{1,0}(\R)$ the closure of the space $\mathcal{S}(\R)$ of Schwartz functions with respect to the norms  $ \norme{f}_{L^\infty} $ and  $\norme{f}_{L^\infty} + \norme{\nabla f}_{L^\infty}$, respectively. For simplicity of notation we just write ${\mathcal{C}^{0,0}}$ and ${\mathcal{C}^{1,0}}$. 
Further, we define the space of continuous functions (respectively, continuously differentiable functions) which are $\alpha$-H\"older continuous (respectively,  with $\alpha$-H\"older continuous first derivatives) for $0<\alpha<1$, that is the spaces
\begin{align*}
&\mathcal{C}^{0,\alpha}(\R):=\{f\in\mathcal{C}^{0,0}(\R)\ \text{ such that } \norme{f}_{\mathcal{C}^{0,\alpha}} <\infty \},\\
&\mathcal{C}^{1,\alpha}(\R):=\{f\in\mathcal{C}^{1,0}(\R)\ \text{ such that } \norme{f}_{\mathcal{C}^{1,\alpha}} <\infty \},
\end{align*} 
where the norms are defined as 
\begin{align*}
&\norme{f}_{\mathcal{C}^{0,\alpha}} := \norme{f}_{L^\infty} + \underset{x\neq y \in\R}{\sup} \frac{\left| f(x) - f(y)\right|}{|x-y|^\alpha},\\
&\norme{f}_{\mathcal{C}^{1,\alpha}} := \norme{f}_{L^\infty} + \norme{\nabla f}_{L^\infty} + \underset{x\neq y \in\R}{\sup} \frac{\left|\nabla f(x) - \nabla f(y)\right|}{|x-y|^\alpha}.
\end{align*}
For simplicity of notation we write ${\mathcal{C}^{0,\alpha}}$ and $\mathcal{C}^{1,\alpha}$ instead of $\mathcal{C}^{0,\alpha}(\R)$  and ${\mathcal{C}^{1,\alpha}(\R)}$, respectively.

For all $s\in \mathbb R$ and $r\in(1,\infty)$ we denote by $H^s_r(\mathbb R)$ the fractional Sobolev spaces (or Bessel-potential spaces) defined as the image of $L^r(\mathbb R)$ through  fractional powers of $A:=I-\frac12 \Delta$, i.e.,  $H^s_r(\mathbb R): = A^{-s/2}(L^r(\mathbb R))$ (for more details on fractional powers of $A$ see \cite[Remark 1.2]{Tri-bas}). This representation corresponds to 
\begin{align*}
    H^s_r(\mathbb R): = \Big\{f\in \mathcal{S}'(\R)\ |\ \mathfrak{F}^{-1} \Big( (1+|\xi|^2)^{\frac{s}{2}} \mathfrak{F}f\Big) \in L^r \Big \},
\end{align*} 
where $\mathfrak{F}$ is the Fourier transform and $\mathcal S'$ the space of Schwartz distributions (\cite[Sec.\ 2.2.2, Eq.\ (11)]{Tri-fun1}). For instance, $H^s_r(\mathbb R)$ with $s=\alpha-1$ contains the distributional derivative of $\alpha$-Hölder-continuous functions with compact support (see, e.g., \cite[Prop.\ 4.1]{Elena}). 
These spaces are Banach spaces when equipped with the norm 
\[
\|f\|_{H^s_r(\mathbb R)} := \| A^{s/2} f\|_{L^r(\mathbb R)},
\]
and $A^{\nu/2}$ is an isomorphism from $H^s_r(\R)$ to $H^{s-\nu}_r(\R)$ for all $\nu\in\R$, see  again \cite{Tri-bas}.

We observe that if $s<0$ then $H^s_r(\R)$ does actually contain distributions, while when $s\geq 0$ it only contains (measurable) functions. For $s=0$ we have the special case $H^0_r(\R) = L^r(\R)$. These spaces enjoy the following inclusion property: for  $ 1<r\leq u<\infty $ and $-\infty< t\leq s <\infty$ such that  $s-\frac1r \geq t-\frac1u$ we have 
\begin{equation}\label{eq:emb}
H^s_r (\R) \subset H^t_u (\R)
\end{equation}
(see \cite[Theorem 2.8.1]{Tri-fun}) and, in particular,  $H^s_r(\R)\subset H^{s-\nu}_r(\R)$ for $\nu>0$. 
Setting $H_{p,q}^s(\mathbb R) := H^s_p(\mathbb R)\cap H^s_q(\mathbb R)$, by interpolation we have that if $f\in H_{p,q}^s(\mathbb R)$ then $f$ is an element of all spaces $H_{r}^s(\mathbb R)$ for $p\wedge q <r< p \vee q$.  For simplicity we will use $H^s_r$ for the space $H^s_r(\R)$ and, analogously, we denote the associated norm by $\|\cdot\|_{H^s_r}$.

We denote by $(P_t)_{t\ge 0}$ the (killed) heat semigroup on $L^r $ generated by $-A$, that is, the semigroup with kernel $p(t,x)= e^{-t} ({2\pi t})^{-1/2} \exp \{-\tfrac{x^2}{2t}\} $. This is a bounded analytic semigroup and $D(A^{s/2}) = H^s_r$ for $s\in\R$, see \cite{Iss-Rus-2200, triebel78} for details.  
Moreover for all $t>0$ and all $s\in\R$ the operator $P_t$ maps $H^s_r$ into itself and, furthermore,  for any $f\in H^{-s}_r$  and for any $\varepsilon>0$  there is a constant $c=c_{\eps}>0$ such that 
\begin{equation}\label{eq:IZ1}
\begin{split}
&\|P_{t}\,f\|_{H^{-s}_{r}}\le \|f\|_{H^{-s}_{r}}\,,\\
&\|P_{t}\,f-f\|_{H^{-(s+\eps)}_{r}}\le c\, t^{\eps/2}\|f\|_{H^{-s}_{r}}\,.
\end{split}
\end{equation}
These bounds can be obtained (as in, e.g., \cite{issoglio2015}) using the following facts: (i) $P_t$ is a contraction on $L^r$, (ii) for $s>0$ the operators $A^{s/2}$ and $P_t$ commute by \cite[Theorem II.6.13]{pazy83} and (iii) $A^{s/2} $ is an isomorphism as mentioned above. For more details, see Appendix \ref{sc:P}.
 
We will also need estimates for the $L^\infty$-norm of $P_tf $ and of its gradient. 
To get those estimates, we use the fractional Morrey inequality  (\cite[Theorem 2.8.1, Remark 2]{triebel78}) which guarantees the embeddings 
\begin{equation}\label{eq:morrey}
H_r^{\nu}(\R)\subset\mathcal{C}^{0,\alpha}(\R) \quad \text{and} \quad H_r^{1+\nu}(\R)\subset\mathcal{C}^{1,\alpha}(\R),
\end{equation} 
if $\alpha:=\nu-1/r>0$. Then, using arguments similar to those for \eqref{eq:IZ1}, combined with fractional Morrey inequality, one 
obtains 
that for all $f\in H^{-s}_r$, $t\in(0, T]$ and $\nu>1/r$ we have
\begin{equation}\label{eq:IZ2}
\begin{split}
&\|P_{t}\,f\|_{L^\infty}\le c\, t^{-(s+\nu)/2}\|f\|_{H^{-s}_{r}}\,,\\
&\|\nabla(P_{t}\,f)\|_{L^\infty}\le c\, t^{-(1 +s+\nu)/2}\|f\|_{H^{-s}_{r}}\,,
\end{split}
\end{equation}
where $c\!>\!0$ varies from line to line and depends on $ T$. 
For more details, see Appendix \ref{sc:P}.

%%%%%%%%%%%%%%%%%%%%%%%%%%%%%%%%%%%%%%%%%%%%%%%%%%%%%%%%%%%%%%%%%%%
\subsection{Description of the scheme}\label{sec: scheme}
Our numerical scheme for \eqref{eq: original sde} is based on two subsequent approximations and a randomisation procedure. In order to justify pointwise evaluation of the distributional coefficient $b$, we approximate it by a sequence of bounded functions $(b^N)_{N\ge 1}$ that converges to $b$ in a suitable norm (see Assumption \ref{ass: b^N}). 
We further mollify the sequence $(b^N)_{N\ge 1}$ by convolution with the (killed) heat kernel and then we apply a generalised Euler-Maruyama scheme. The mollification can be interpreted as a randomisation procedure in space and it allows us to obtain a uniform rate of convergence for the overall scheme (see the discussion at the beginning of Section \ref{sec:thm2}).

To fix notation, let us consider a bounded measurable function $b^N:[0,T]\times\R\to \R$ and fix a constant $\eta_N>0$. Then the SDE
\begin{equation} \label{eq: approximated sde}
\di X^N_t = \big(P_{\eta_N}\, b^N\big)\left(t,X^N_t\right) \di t + \di W_t,\qquad X_0^N = x,
\end{equation} 
admits a unique strong solution. Note that in \eqref{eq: approximated sde} we slightly abuse the notation, because the solution $X^N$ depends both on $N$ and $\eta_N$ but we only indicate the dependence on $N$. Here and in what follows we always consider $\eta_N\to 0$ as $N\to\infty$.

Let $m\in \mathbb N$ and let us consider an equally-spaced partition of $[0,T]$ by setting $t_k:=\frac{kT}m$ for $k=0,1,\ldots, m$. Further, let us define
\[
k(t):=\sup\{k:t_k\le t\}.
\] 
Then the Euler-Maruyama approximation of the solution $X^N$ is given by 
\begin{align}\label{eq:EuM}
X^{N,m}_t=x+\int_0^t \big(P_{\eta_N}\,b^N\big)\left(t_{k(s)},X^{N, m}_{t_{k(s)}}\right)\di s+W_t,
\end{align}
and it is computed numerically according to    
\begin{equation}\label{eq: scheme}
    X^{N,m}_t=X^{N,m}_{t_{k(t)}}+\big(P_{\eta_N}\, b^N\big)\left(t_{k(t)},X^{N, m}_{t_{k(t)}}\right)(t-t_{k(t)})+\sqrt{t-t_{k(t)}}\,\, \varepsilon_k,
\end{equation}
with $(\varepsilon_k)_{k}$ i.i.d.\ standard Gaussian random variables.  

In general, the numerical implementation of the scheme  is more complicated than a standard Euler scheme since the mollified drift $P_{\eta_N}\, b^N$ may not be easily computable. However, we choose $b^N$ as a finite linear combination (with time-dependent coefficients) of Haar wavelets, which are piecewise constant functions with a very simple structure. Haar wavelets are convenient because they form an unconditional Schauder basis for the space $H^{s}_{r}$ for $-\frac12 <s<\frac1r$ and $P_{\eta_N}\,b^N$ reduces to a finite sum of terms of the form
	\begin{align}\label{eq:CDF}
	(P_{\eta_N}\,\mathds{1}_{[x_1,x_2)})(x)
	=e^{-\eta_N}\Phi\big((x_2-x)\eta_N^{-1/2}\big)-e^{-\eta_N}\Phi\big((x_1-x)\eta_N^{-1/2}\big),
	\end{align}
where $\Phi$ is the cumulative distribution of a standard normal and $x_1<x_2$ are suitable real numbers (see Appendix \ref{sc: haar} for details and Figure \ref{fig: haar wavelet} for an illustration of a Haar wavelet). This procedure introduces no additional numerical complication and suggests that Haar wavelets are a natural candidate for a numerical implementation of the scheme.

\begin{figure}
	\centering
	\begin{tikzpicture}
	\draw[->] (-0.5,0)--(7,0) node[below right] {$x$};
	\draw[->] (0.5,-1.6)--(0.5,1.6) node[left] {$h_{j,m}(x)$};
	\draw (1.5,1)  node[anchor=east]{}--(3.5,1);
	\draw (0.5,-1)[dashed] node[anchor=east]{$-1$} ;
	\draw (0.5,1)[dashed]  node[anchor=east]{$1$} ;
	\draw (3.5,-1) node[anchor=east]{} --(5.5,-1);
	\draw[dashed] (1.5,0) node[anchor=north]{$\frac{m}{2^j}$} --(1.5,1);
	\draw[dashed] (3.5,-1) node[anchor=north west]{} --(3.5,1);
	\draw[dashed] (5.5,0) node[anchor=south]{$\frac{m+1}{2^j}$} --(5.5,-1);
	\draw[color=blue, very thick] (-0.5,0) -- (1.5,0) ;
	\draw[color=blue, very thick] (1.5,1) -- (3.5,1) ;
	\draw[color=blue, very thick] (3.5,-1) -- (5.5,-1) ;
	\draw[color=blue, very thick] (5.5,0) -- (7,0) ;
	\end{tikzpicture}
	\caption{The Haar wavelet $h_{j,m}$ for $j\in\N$ and $m\in\Z$.}\label{fig: haar wavelet}
\end{figure}
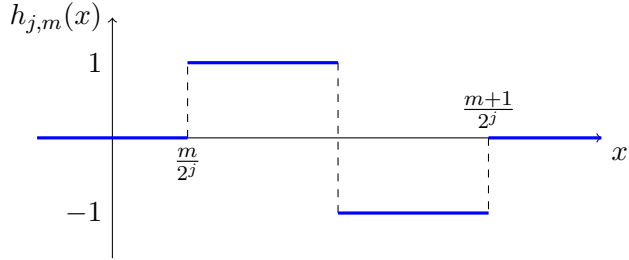

%%%%%%%%%%%%%%%%%%%%%%%%%%%%%%%%%%%%%%%%%%%%%%%%%%%%%%%%%%%%%%%%%
\section{Main theoretical results}\label{sc: main}

The main theoretical result of the paper, given in Theorem \ref{thm:global}, states a rate of convergence of the numerical scheme in an $L^1$-norm. To prove this result, we first find the rate of convergence of $X^N$ to $X$ in terms of the rate of convergence of $b^N$ to $b$ (Proposition \ref{pr:main2} and Proposition \ref{prop:Haar}). Then, for fixed $N$, we obtain the rate of convergence of the Euler-Maruyama scheme (Proposition \ref{thm:EM}). Finally, combining the two we obtain a global rate of convergence for the scheme (Theorem \ref{thm:global}).

Let us start by introducing the main assumptions on $b$ and $b^N$ which are needed for the results of this section.
\begin{assumption}\label{ass: main}
Let $ \beta_0\in\left(0,\frac{1}{4}\right)$ and $q_0\in\big(4,\frac{1}{\beta_0}\big)$ and fix $\tilde{q}_0:=(1-\beta_0)^{-1}$. For some $\kappa\in(\tfrac{1}{2},1)$ we take  $b\in  \mathcal{C}^\kappa([0,T];H^{-\beta_0}_{ \tilde q_0, q_0}) $.
\end{assumption}
Notice in particular that $b\in  \mathcal{C}^\kappa([0,T];H^{-\beta_0}_{ 2})$ by interpolation, since $1<\tilde q_0<4/3$. For future reference we also set 
\begin{equation}\label{eq:gamma0}
\gamma_0:=1-\beta_0-\frac{1}{q_0}
\end{equation}
and notice that under Assumption \ref{ass: main} we have $\gamma_0>\tfrac{1}{2}$.

\noindent {\bf Example}.
A simple example of a (time-homogeneous) drift $b$ that satisfies our Assumption \ref{ass: main} is constructed as follows: $b(t,x)=f'(x)$, where $f'$ is the distributional derivative of a $\alpha$-H\"older continuous function $f$ with compact support and $\alpha \in (\frac34,1)$.
Indeed, arguing as in \cite[Proposition 4.1]{Elena} it can be shown that any $\alpha$-H\"older continuous function $f$, with compact support and $\alpha \in(0,1)$ belongs to $H^{\alpha'}_p(\R)$ for any $1< p < +\infty$ and $0< \alpha '< \alpha$. Therefore, $f'$ is a distribution in $H^{\alpha'-1}_{p}(\R)$ for all $1< p<\infty$ and $0<\alpha'<\alpha$ and so it satisfies our Assumption 1 if $\alpha>\alpha'>\frac34$. In that case $\beta_0=\alpha'-1\in(0,\frac14)$, as needed, and $f'\in H^{-\beta_0}_{\tilde q_0,q_0}=H^{-\beta_0}_{\tilde q_0}\cap H^{-\beta_0}_{q_0}$.  

For completeness, it is worth mentioning the two key steps in the proof of \cite[Proposition 4.1]{Elena}. Let $f$ be any $\alpha$-H\"older continuous function with compact support and $\alpha\in(0,1)$. First, using an equivalent norm (see \cite[Eq.\ (10.19)]{triebel97}) it is shown that $f\in B^{\alpha'}_{p',2}(\R)$ for $0<\alpha'<\alpha$ and $p'\geq1$, where $B^{\alpha'}_{p',2}(\R)$ is a Besov space. Then, the proof is completed by the embedding $B^{\alpha'}_{p',2}(\R)\subset H^{\alpha''}_{p''}(\R) $ (see \cite[Eq.\ (11.17)]{triebel97}) for all $p''>p'$ and $\alpha'' = \alpha' -\tfrac1{p'}+\tfrac1{p''}$.
\hfill$\square$

\begin{assumption}\label{ass: b^N}
Let $(b^N)_{N\ge 1}\subset \mathcal{C}^\frac12([0,T];H^0_{\tilde q_0,q_0})$ be such that
\[
\lim_{N\to\infty}b^N=b\quad\text{in}\quad \mathcal{C}^\frac12([0,T];H^{-\beta_0}_{q_0}).
\]
\end{assumption}
 
The rate of convergence of $X^N$ to $X$ is given in the next proposition.
\begin{proposition}\label{pr:main2}
Let Assumptions \ref{ass: main} and \ref{ass: b^N} hold. 
Take any $(\beta,q)$ such that $\beta\in(\beta_0,\tfrac{1}{2})$ and $q_0\ge q>\tilde q\ge \tilde q_0$, where $\tilde q:=(1-\beta)^{-1}$. Then, for any $1/2<\gamma<\gamma_0$ there is a constant $C_{\gamma}>0$ such that 
\begin{align}\label{eq:main2}
\sup_{0\le t \le T}\E&\left[|X^N_t\!-\!X_t|\right]\le C_{\gamma} \|P_{\eta_N}\,b^N-b\|^{2\gamma-1}_{\infty,H^{-\beta}_q}  
\end{align}
as $N\to\infty$.
\end{proposition}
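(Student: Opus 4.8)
The plan is to compare $X^N$ and $X$ after the Zvonkin-type change of variables recalled in Section~\ref{sec:background}, and then run a Yamada--Watanabe estimate on the transformed processes: the change of variables trades the distributional drift for a Lipschitz drift and a diffusion coefficient that is $\gamma$-H\"older for every $\gamma<\gamma_0$, with $\gamma$ admissibly larger than $1/2$, and this H\"older (rather than Lipschitz) structure is exactly what produces the exponent $2\gamma-1$. Throughout I write $\delta_N:=\|P_{\eta_N}\,b^N-b\|_{\infty,H^{-\beta}_q}$, which tends to $0$ by Assumption~\ref{ass: b^N}, the embedding \eqref{eq:emb} and the first bound in \eqref{eq:IZ1}.

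First, let $u$ solve the PDE with distributional drift $b$ and parameter $\lambda$, and let $u^N$ solve the analogous PDE with the smooth drift $P_{\eta_N}\,b^N$; set $\phi:=I+u$, $\phi^N:=I+u^N$, with spatial inverses $\psi,\psi^N$. Since $\{P_{\eta_N}\,b^N\}_N$ is bounded in $H^{-\beta_0}_{q_0}$ (Assumption~\ref{ass: b^N} and \eqref{eq:IZ1}), taking $\lambda$ large gives, uniformly in $N$: the maps $\phi,\phi^N$ are bi-Lipschitz with constants as close to $1$ as desired; $\nabla u,\nabla u^N$ are bounded in $\mathcal{C}([0,T];\mathcal{C}^{0,\gamma})$ for every $\gamma<\gamma_0$ by \eqref{eq:morrey}; and $Y_t:=\phi(t,X_t)$, $Y^N_t:=\phi^N(t,X^N_t)$ solve, driven by the \emph{same} $W$,
\[
\di Y_t=\lambda u(t,\psi(t,Y_t))\,\di t+\big(1+\nabla u(t,\psi(t,Y_t))\big)\di W_t,\qquad Y_0=\phi(0,x),
\]
and the analogous equation for $Y^N$ with $(u^N,\psi^N)$ in place of $(u,\psi)$. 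For $Y$ this is essentially the definition of the virtual solution $X$, and for $Y^N$ it follows from the classical It\^o formula, since $X^N$ is the strong solution of \eqref{eq: approximated sde} whose drift is smooth. In both equations the drift is Lipschitz in space and the diffusion coefficient lies in $\mathcal{C}([0,T];\mathcal{C}^{0,\gamma})$ for every $\gamma<\gamma_0$.

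Next, I would compare the two transformations. The difference $v:=u-u^N$ solves the PDE with the same principal part, drift $b$, homogeneous boundary condition, and source $(P_{\eta_N}\,b^N-b)(1+\nabla u^N)$; since pointwise multiplication by a $\mathcal{C}^{0,\gamma}$ function with $\gamma>\beta$ is bounded on $H^{-\beta}_q$ and $\nabla u^N$ is uniformly bounded in such a space (for some $\gamma\in(\beta,\gamma_0)$), the source has $\mathcal{C}([0,T];H^{-\beta}_q)$-norm $\le C\delta_N$. Inserting this into the mild formulation and using the semigroup bounds \eqref{eq:IZ1}--\eqref{eq:IZ2} with a Gronwall argument yields $\|u-u^N\|_{\infty,L^\infty}+\|\nabla u-\nabla u^N\|_{\infty,L^\infty}\le C\delta_N$, hence also $\|\psi-\psi^N\|_{\infty,L^\infty}\le C\delta_N$ and $|Y_0-Y^N_0|\le C\delta_N$. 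It follows that the drifts of the two $Y$-equations differ in sup-norm by at most $C\delta_N$, but the diffusion coefficients only by at most $C\delta_N^{\gamma}$: indeed $\nabla u\circ\psi-\nabla u^N\circ\psi^N=(\nabla u-\nabla u^N)\circ\psi+\big(\nabla u^N\circ\psi-\nabla u^N\circ\psi^N\big)$, and the last term is bounded by $[\nabla u^N]_{\gamma}\,\|\psi-\psi^N\|_\infty^{\gamma}\le C\delta_N^{\gamma}$. This H\"older loss, produced by pushing the merely H\"older function $\nabla u^N$ through the perturbed inverse, is what degrades the rate.

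Finally, set $Z:=Y-Y^N$ and apply It\^o's formula with the Yamada--Watanabe approximations $\varphi_n$ of $|\cdot|$ (breakpoints $a_n\downarrow0$). The first-order term is $\le C(\E[|Z_t|]+\delta_N)$ by Lipschitzianity; in the second-order term the genuinely H\"older part is $\le C n^{-1}a_{n-1}^{2\gamma-1}$ (here $2\gamma-1>0$ is essential, so that $|Z|^{2\gamma}$ is small on the support of $\varphi_n''$), and the coefficient-mismatch part is $\le C(na_n)^{-1}\delta_N^{2\gamma}$. Using $\E[|Z_t|]\le\E[\varphi_n(Z_t)]+a_{n-1}$, the bound on $|Y_0-Y^N_0|$, and Gronwall's inequality, one gets
\[
\sup_{0\le t\le T}\E\big[|Z_t|\big]\le C\Big(\delta_N+a_{n-1}^{2\gamma-1}+(na_n)^{-1}\delta_N^{2\gamma}\Big);
\]
choosing the level $n=n(N)$ with $a_n\sim\delta_N$ balances the last two terms at order $\delta_N^{2\gamma-1}$, up to a subpolynomial factor $e^{O(\sqrt{\log(1/\delta_N)})}$ which is absorbed by running the argument with an exponent slightly larger than $\gamma$ in $(\gamma,\gamma_0)$; the term $\delta_N$ is dominated since $2\gamma-1<1$. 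Transferring back via $X_t-X^N_t=\psi(t,Y_t)-\psi^N(t,Y^N_t)$, the Lipschitzianity of $\psi$, and $\|\psi-\psi^N\|_{\infty,L^\infty}\le C\delta_N$ gives $\sup_{0\le t\le T}\E[|X^N_t-X_t|]\le C_\gamma\,\delta_N^{2\gamma-1}$ for $N$ large, which is \eqref{eq:main2}. I expect the main obstacle to be the tension between the PDE comparison, which is only linear in $\delta_N$ and requires uniform-in-$N$ control of $u^N$ and of the diffeomorphisms $\phi^N$, and the Yamada--Watanabe scheme for the transformed SDEs, whose diffusion coefficients are only $\gamma$-H\"older and match solely to order $\delta_N^{\gamma}$; reconciling the two produces the exponent $2\gamma-1$ and forces $\gamma>\tfrac12$.
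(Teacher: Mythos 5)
Your proposal is correct and follows essentially the same route as the paper: the Zvonkin transform $Y_t=\varphi(t,X_t)$, $Y^N_t=\varphi^N(t,X^N_t)$ driven by the same Brownian motion (which the paper justifies via Remark \ref{rem:strong}), the stability bounds $\|u-u^N\|_{\infty,L^\infty}+\|\nabla u-\nabla u^N\|_{\infty,L^\infty}\le C\,\delta_N$ and $\|\psi-\psi^N\|_{\infty,L^\infty}\le C\,\delta_N$ (which the paper imports through Lemma \ref{lem:conv}, Lemma \ref{lem: morrey} and Lemma \ref{lem: psi} rather than re-deriving them from the equation satisfied by $u-u^N$), and a comparison of the transformed SDEs in which the $\gamma$-H\"older diffusion coefficient with $\gamma>\tfrac12$ produces the exponent $2\gamma-1$. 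The only technical difference is that you run the classical Yamada--Watanabe sequence $\varphi_n$ and optimise the level ($a_n\sim\delta_N$), whereas the paper applies It\^o--Tanaka to $|Y^N-Y|$ and bounds the local time via the one-parameter function $g_\eps$ of Lemma \ref{lm:g} with $\eps=\|a^N-b\|_{\infty,H^{-\beta}_q}$ (Proposition \ref{prop:localtime}); the two devices are equivalent, lead to the same balancing, and both absorb the residual subpolynomial (respectively $\zeta<1$) loss by slightly enlarging $\gamma$ within $(\tfrac12,\gamma_0)$.
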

The proof of this result builds on a number of lemmas and we give it in Section \ref{sec:thm1} (the constant $C_{\gamma}>0$ is found explicitly). It is worth noticing that on the right-hand side of \eqref{eq:main2} we use the $H^{-\beta}_q\!$-norm with $( \beta, q)$ possibly different from $ (\beta_0, q_0)$. The reasons for this will become clear later (see in particular Proposition \ref{prop:Haar}) and in the next remark we show that the right-hand side of \eqref{eq:main2} is well-defined.   

\begin{remark}\label{rm:b^N}
Let us fix $(\beta_0, q_0)$ according to Assumption \ref{ass: main} and let us pick $(\beta, q)$ such that $\beta\in[\beta_0,\frac12)$ and $q_0\ge q> \tilde q\ge \tilde q_0$, where $\tilde q :=(1-\beta)^{-1}$. This is always possible thanks to Assumption \ref{ass: main} and, as a special case, we can pick $q=2$ as needed in Proposition \ref{prop:Haar} below. In this setting:
\begin{itemize}
\item[(i)] 
The embedding  $H^{-\beta_0}_{\tilde q_0,q_0}  \subset H^{-\beta}_{\tilde q,q}$ holds. Indeed $H^{-\beta_0}_{\tilde q_0,q_0}\subset H^{-\beta}_{\tilde q_0,q_0}$ by embedding of Sobolev spaces of negative order, see \eqref{eq:emb}, and we  also have $H^{-\beta}_{\tilde q_0,q_0}\subset H^{-\beta}_{\tilde q,q}$ by interpolation of $L^p$-spaces. Combining the above we have 
\[
b\in\mathcal{C}^\kappa([0,T]; H^{-\beta_0}_{\tilde q_0, q_0})\subset \mathcal{C}^\kappa([0,T]; H^{-\beta}_{\tilde q, q})
\] 
and, in particular, $b \in \mathcal{C}^\kappa([0,T]; H^{-\beta}_{q})$. Thanks to \eqref{eq:IZ1} also $P_{\eta_N}b $ belongs to the same space as $b$. Similarly, $b^N, P_{\eta_N}b^N \in \mathcal{C}^{\frac12}([0,T]; H^0_{\tilde q_0, q_0}) $.
\item[(ii)]  
For $(b^N)_{N\ge 1}$ as in Assumption \ref{ass: b^N} we have $P_{\eta_N}b^N \to b $ in $\mathcal C ([0,T]; H^{-\beta}_{q})$ as $N\to\infty$. Indeed, $b$, $P_{\eta_N}b$ and $P_{\eta_N}b^N$ belong to $ H^{-\beta}_{q}$ due to the item above and, thanks to \eqref{eq:IZ1}, we have 
\begin{align*}
\|P_{\eta_N}b^N - b\|_{\infty,H^{-\beta}_{q}}  
& \leq \|P_{\eta_N}(b^N-b)\|_{\infty,H^{-\beta}_{q}}  +\|P_{\eta_N}\, b-b\|_{\infty,H^{-\beta}_{q}} \\
&\leq \|b^N-b\|_{\infty,H^{-\beta}_{q}} + \|P_{\eta_N}\, b-b\|_{\infty,H^{-\beta}_{q}}  \\
&\leq \|b^N-b\|_{\infty,H^{-\beta}_{q}} + c\,\eta_N^{\frac{\beta-\beta_0}{2}} \|b\|_{\infty,H^{-\beta_0}_{q}}. 
\end{align*}
If $\beta>\beta_0$, the last term clearly goes to zero as $\eta_N\to 0$.
Moreover, since $-\beta_0-\tfrac1{q_0} \geq -\beta -\tfrac1{q}$ by assumption, using \eqref{eq:emb} we have $H^{-\beta_0}_{q_0} \subset H^{-\beta}_{q}$. Hence  
\[
\|b^N-b\|_{\infty,H^{-\beta}_{q} }  \leq c \|b^N-b\|_{\infty,H^{-\beta_0}_{q_0}}\to 0\quad\text{as} \, \, N \to \infty.
\] 
\end{itemize}
\end{remark}

In order to obtain a convergence rate in our scheme as we let $(N,m)\to\infty$ simultaneously we need to write the right-hand side of \eqref{eq:main2} explicitly in terms of $N$. For that we define a specific sequence $(b^N)_{N\geq 1}$ that satisfies Assumption \ref{ass: b^N}. 
In particular, the approximating sequence $(b^N)_{N\ge 1}$ is defined via a suitable truncation of the series expansion of $b$ in Haar wavelets. Let 
\[
\{h_{j,m},\, j\in\N\cup\{-1\},\, m \in\Z\}
\]
be a system of Haar wavelets on $\mathbb{R}$ (Definition \ref{def: Haar system R}). 
Since $b(t)\in H^{-\beta}_{q}$ for any $\beta\in[\beta_0,\tfrac{1}{2})$, $q\in[\tilde q_0,q_0]$ and all $t\in[0,T]$, by Remark \ref{rm:b^N} part (i), then we have (see \cite[Theorem 2.9]{Tri-bas}  or \eqref{eq:muh} and  Theorem \ref{thm: haar R})
\begin{align*}
&b(t) = \sum_{j=-1}^{+\infty}\sum_{m\in \Z}\mu_{j,m}(t) 2^{-j(-\beta-{\frac{1}{q}})}  h_{j,m},
\end{align*}
with $\mu_{j,m}(t)$ defined via the dual pairing of $b(t)$ and $h_{j,m}$ as
\begin{align}\label{eq:mujm}
\mu_{j,m}(t)= 2^{j(-\beta-\frac{1}{q}+1)}\langle b(t),h_{j,m}\rangle,
\end{align}
for each $t\in[0,T]$. We remark that $\mu_{j,m}(t)=\mu_{j,m}(t;\beta,q)$ only depends on $\beta$ and $q$ via the exponential term and not via the dual pairing (see Remark \ref{rm: haar coeff pq}). Later on we will use
\begin{equation}\label{eq:mus}
\mu_{j,m}(t;\beta,q)=2^{-j(\beta-\beta_0)}\mu_{j,m}(t;\beta_0,q). 
\end{equation}

From \cite[Theorem 2.9 and eq.\ (2.114), Sec.\ 2.2.3]{Tri-bas} we have
\begin{align}\label{eq:equivn-a}
\|b\|_{\infty,H^{-\beta}_q}<\infty\iff\sup_{t\in[0,T]}\|\boldsymbol{\mu}(t;\beta,q)|f^-_{q,2}\|<\infty,
\end{align}
where $\boldsymbol{\mu}(t;\beta,q) := \{\mu_{j,m}(t;\beta,q)\}_{j,m}$ and a definition of its $f^-_{q,2}$-norm with further details is provided in Appendix \ref{sc:coeffs} for completeness. In particular, if $q=2$ we have 
\begin{align}\label{eq:tribnorm2a}
\sup_{t\in[0,T]}\|\boldsymbol{\mu}(t;\beta,2)|f^-_{2,2}\|=\sup_{t\in[0,T]}\Big[ \sum_{j=-1}^\infty \sum_{m\in\Z}\big|\mu_{j,m}(t;\beta,2)\big|^2\Big]^{\frac{1}{2}}<\infty.
\end{align}
Thanks to \eqref{eq:tribnorm2a}, for $\beta\in(\beta_0,\tfrac{1}{2})$ and $N\in\N$ fixed, we can define $\tau(N)\in\N$ as the smallest integer for which
\begin{align}\label{eq:max}
\sup_{t\in[0,T]}\sum_{|m| >\tau(N)}\big|\mu_{j,m}(t;\beta,2)\big|^2\leq \frac{2^{-(N+1)(\beta-\beta_0)}}{N+1} \sup_{t\in[0,T]}\big\|\boldsymbol{\mu}(t;\beta_0,2)|f^-_{2,2}\big\|^2,
\end{align}
for all $j=0, \cdots, N$. Then we set
\begin{equation}\label{eq:haarexp}
b^N(t) := \sum_{j=-1}^{N}\sum_{m=-\tau(N)}^{\tau(N)}\mu_{j,m}(t) 2^{-j(-\beta-\frac{1}{q})} h_{j,m},
\end{equation}
where for $j=-1$ the sum in $m$ only takes the term with $m=0$. 

Clearly $b^N(t)\in  H^{0}_{\tilde q_0, q_0} \subset H^{-\beta_0}_{\tilde q_0, q_0}$ by construction. Since $b$ is $\kappa$-H\"older continuous in time with values in $H^{-\beta_0}_{\tilde q_0,q_0}$, then also the coefficients $t \mapsto \mu_{j,m}(t)$ are $\kappa$-H\"older continuous with values in $\mathbb R$. Thus we have $b^N \in \mathcal C^\kappa([0,T]; H^0_{\tilde q_0, q_0})$. 
Now we have a simple way of estimating the rate of convergence of $b^N$ to $b$ in the space $\mathcal C([0,T];H^{-\beta}_2)$, as illustrated below.
\begin{proposition}\label{prop:Haar}
Let Assumption \ref{ass: main} hold and let the sequence $(b^N)_{N\ge 1}$ be defined as in \eqref{eq:haarexp}. Then $(b^N)_{N\ge 1}$ satisfies Assumption \ref{ass: b^N} and for any $\beta\in(\beta_0,\tfrac{1}{2})$ we have
\begin{align}\label{eq:ratebN}
\big\|b^N-b\big\|_{\infty,H^{-\beta}_2}\le c\, 2^{-(N+1)(\beta-\beta_0)}\big\|b\big\|_{\infty,H^{-\beta_0}_2}. 
\end{align}
\end{proposition}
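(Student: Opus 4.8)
The plan is to exploit the fact that $b^N(t)$ is literally the partial sum of the Haar wavelet expansion of $b(t)$ in the space $H^{-\beta}_2$, so that $b^N-b$ is the tail of that expansion, and then to bound the $H^{-\beta}_2$-norm of this tail using the multiplier $2^{-j(\beta-\beta_0)}$ that appears in \eqref{eq:mus}. First I would fix $\beta\in(\beta_0,\tfrac12)$ and note that, by Remark \ref{rm:b^N}(i) with $q=2$, we indeed have $b(t)\in H^{-\beta}_2$ and $b(t)\in H^{-\beta_0}_2$ for every $t$, and that Haar wavelets form an unconditional Schauder basis of both spaces (this is \cite[Theorem 2.9]{Tri-bas} / Theorem \ref{thm: haar R}, using $-\tfrac12<-\beta<\tfrac12$ and similarly for $\beta_0$). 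Hence
\[
(b^N-b)(t)=-\sum_{j=N+1}^{+\infty}\sum_{m\in\Z}\mu_{j,m}(t;\beta,2)\,2^{-j(-\beta-\frac12)}h_{j,m}
-\sum_{j=-1}^{N}\sum_{|m|>2^j}\mu_{j,m}(t;\beta,2)\,2^{-j(-\beta-\frac12)}h_{j,m},
\]
so the error consists of the high-frequency tail ($j>N$) together with the spatially-truncated pieces at low frequencies; the latter in fact vanish because $h_{j,m}$ has support of length $2^{-j}$ centred near $m2^{-j}$, and for the canonical drift these coefficients are zero outside a bounded window — but to keep the argument clean I would instead simply note that the full sum over $m\in\Z$ for $j\le N$ is captured by $b^N$ whenever the relevant coefficients are supported in $|m|\le 2^j$, which is the case here since the support of $b$ is effectively bounded; alternatively one absorbs everything into a single tail estimate.

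Next, the key computation: by the equivalence of the Haar-sequence norm with the $H^{-\beta}_2$-norm (equation \eqref{eq:muh} referenced in the text),
\[
\|(b^N-b)(t)\|_{H^{-\beta}_2}\ \le\ c\,\Big(\sum_{j=N+1}^{\infty}\sum_{m\in\Z}|\mu_{j,m}(t;\beta,2)|^2\Big)^{1/2}.
\]
Now apply the crucial identity \eqref{eq:mus}, $\mu_{j,m}(t;\beta,2)=2^{-j(\beta-\beta_0)}\mu_{j,m}(t;\beta_0,2)$, to get
\[
\sum_{j=N+1}^{\infty}\sum_{m\in\Z}|\mu_{j,m}(t;\beta,2)|^2
= \sum_{j=N+1}^{\infty}2^{-2j(\beta-\beta_0)}\sum_{m\in\Z}|\mu_{j,m}(t;\beta_0,2)|^2
\ \le\ 2^{-2(N+1)(\beta-\beta_0)}\sum_{j\ge -1}\sum_{m\in\Z}|\mu_{j,m}(t;\beta_0,2)|^2,
\]
where I pulled out the largest factor $2^{-2(N+1)(\beta-\beta_0)}$ (legitimate since $\beta>\beta_0$ makes the exponent negative and monotone in $j$). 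The remaining double sum is, again by the norm equivalence, bounded by $c\,\|b(t)\|_{H^{-\beta_0}_2}^2$. Taking square roots and then the supremum over $t\in[0,T]$ yields \eqref{eq:ratebN}.

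Finally I would check that $(b^N)$ satisfies Assumption \ref{ass: b^N}: the bound \eqref{eq:ratebN} with any fixed $\beta\in(\beta_0,\tfrac12)$ shows $b^N\to b$ in $\mathcal C([0,T];H^{-\beta}_2)$, hence a fortiori, via the embedding $H^{-\beta}_{q_0}\supset H^{-\beta_0}_{q_0}\cdots$ and interpolation exactly as in Remark \ref{rm:b^N}(ii) (here one uses $-\beta_0-\tfrac1{q_0}\ge -\beta-\tfrac12$ to get $H^{-\beta_0}_{q_0}\subset H^{-\beta}_2$ run the other way, i.e.\ one needs convergence in $H^{-\beta_0}_{q_0}$), so I would instead derive the $H^{-\beta_0}_{q_0}$-convergence directly by the same tail argument performed in the $H^{-\beta_0}_{q_0}$-norm — the Haar system is still an unconditional basis there since $-\tfrac12<-\beta_0<\tfrac1{q_0}$ — obtaining $\|b^N-b\|_{\infty,H^{-\beta_0}_{q_0}}\to 0$; combined with the time-regularity of the coefficients $\mu_{j,m}$ this gives $b^N\to b$ in $\mathcal C^{1/2}([0,T];H^{-\beta_0}_{q_0})$, which is Assumption \ref{ass: b^N}. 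The main obstacle I anticipate is bookkeeping around the spatial truncation $|m|\le 2^j$ versus the full sum over $m\in\Z$, and making sure the norm-equivalence constants are genuinely uniform in $t$ and independent of $N$ and of the particular $\beta$; once the identity \eqref{eq:mus} is in hand, the decay in $N$ is essentially immediate.
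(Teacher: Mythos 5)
Your proof of the rate \eqref{eq:ratebN} is exactly the paper's argument: the equivalence of $\|\cdot\|_{H^{-\beta}_2}$ with the coefficient norm \eqref{eq:tribnorm2}, the identity \eqref{eq:mus} used to extract the factor $2^{-2(N+1)(\beta-\beta_0)}$ from the tail $j\ge N+1$, the bound of the remaining sum by $c\,\|b(t)\|^2_{H^{-\beta_0}_2}$, and the supremum over $t$; your verification of Assumption \ref{ass: b^N} likewise rests on the paper's ingredients (convergence in $H^{-\beta_0}_{q_0}$ uniformly in $t$ together with the $\kappa$-H\"older time regularity, $\kappa>\tfrac12$), the only difference being that the paper makes your two asserted steps precise via Dini's theorem for the uniformity in $t$ and via boundedness of $(b^N)_{N\ge1}$ in $\mathcal{C}^\kappa([0,T];H^{-\beta_0}_{q_0})$ combined with the embedding into $\mathcal{C}^{1/2}$. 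Your worry about the low-frequency terms with $j\le N$, $|m|>2^j$ is legitimate, but the paper's own proof glosses over exactly the same point, so it does not separate your argument from theirs.
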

\begin{proof}
As already observed $(b^N)_{N\ge 1}\subset \mathcal C^\kappa ([0,T];H^0_{\tilde q_0,q_0})$ by construction. Recall also that $b(t), b^N(t) \in H^{-\beta}_2$ for all $\beta\in(\beta_0,\tfrac{1}{2})$ by Remark \ref{rm:b^N}, part (i).

Thanks to \eqref{eq:equivn-a} and \eqref{eq:haarexp} (see also \eqref{eq:tribnorm2} and \eqref{eq:equivn} in Appendix \ref{sc:coeffs}) it is immediate to see that $\|b^N(t)-b(t)\|_{H^{-\beta_0}_{q_0}}$ decreases to zero as $N\to\infty$. Moreover $t\mapsto \|b^N(t)-b(t)\|_{H^{-\beta_0}_{q_0}}$ is continuous and therefore by Dini's theorem $b^N\to b$ in $\mathcal C([0,T];H^{-\beta_0}_{q_0})$ as $N\to \infty$. 
{Thanks to the equivalence of the norms \eqref{eq:equivn-a} it is immediate to see that 
\begin{align*}
\sup_{t\neq s\in[0,T]}\frac{\|b^N(t)-b^N(s)\|_{H^{-\beta_0}_{q_0}}}{|t-s|^\kappa}\sim \sup_{t\neq s\in[0,T]}\frac{\|\boldsymbol{\mu}^N(t;\beta_0,q_0)-\boldsymbol{\mu}^N(s;\beta_0,q_0)|f^-_{q_0,2}\|}{|t-s|^\kappa},
\end{align*}
where we use the symbol ``$\sim$'' to indicate equivalence of the norms and $\boldsymbol{\mu}^N(\cdot ;\beta_0,q_0)=\{\mu_{j,m}(\cdot;\beta_0,q_0), j=-1,\ldots, N, m=-\tau(N),\ldots,\tau(N)\}$ contains the coefficients that appear in the expression for $b^N(\cdot)$ in \eqref{eq:haarexp}. Since the series expansion of $b^N$ contains a finite subset of the terms in the series expansion of $b$, then 
\begin{align*}
&\sup_{t\neq s\in[0,T]}\frac{\|\boldsymbol{\mu}^N(t;\beta_0,q_0)-\boldsymbol{\mu}^N(s;\beta_0,q_0)|f^-_{q_0,2}\|}{|t-s|^\kappa}\\
&\le \sup_{t\neq s\in[0,T]}\frac{\|\boldsymbol{\mu}(t;\beta_0,q_0)-\boldsymbol{\mu}(s;\beta_0,q_0)|f^-_{q_0,2}\|}{|t-s|^\kappa}\le c  [b]_{\kappa,H^{-\beta_0}_{q_0}},
\end{align*}
for some constant $c>0$ that arises from the equivalence of norms.}
%Since $(b^N)_{N\ge 1}\subset \mathcal C^\kappa([0,T];H^{-\beta_0}_{q_0})$ and
%\[
%\lim_{N\to\infty}\left( \sup_{\substack{t,s\in[0,T]\\|t-s|\le \delta}}\|b^N(t)-b^N(s)\|_{H^{-\beta_0}_{q_0}}\right) = \sup_{\substack{t,s\in[0,T]\\|t-s|\le \delta}}\|b(t)-b(s)\|_{H^{-\beta_0}_{q_0}}\le [b]_{\kappa,H^{-\beta_0}_{q_0}}\delta^\kappa,
%\]
Then $(b^N)_{N\ge 1}$ is also a bounded subset of $\mathcal C^\kappa([0,T];H^{-\beta_0}_{q_0})$. Recalling that $\kappa\in(\tfrac{1}{2},1)$ we can conclude that $b^N\to b$ in $\mathcal C^\frac{1}{2}([0,T];H^{-\beta_0}_{q_0})$ as $N\to \infty$ since the embedding $\mathcal C^{\frac{1}{2}}\subset \mathcal C^\kappa$ is compact. Hence, $(b^N)_{N\ge 1}$ satisfies Assumption \ref{ass: b^N}.

It remains to prove \eqref{eq:ratebN}. For $q=2$ and a suitable constant $c>0$, using \eqref{eq:mus}, \eqref{eq:tribnorm2a} and \eqref{eq:max} we have
\begin{align*}
& \big\|b^N(t)-b(t)\big\|^2_{H^{-\beta}_2}\\
&\le c\Big(\sum_{j>N}\sum_{m\in\Z}\big|\mu_{j,m}(t;\beta,2)\big|^2 + \sum_{j=0}^N\sum_{|m|>\tau(N)}\big|\mu_{j,m}(t;\beta,2)\big|^2\Big) \\
&\le c\, \Big(2^{-2(N+1)(\beta-\beta_0)} \sum_{j>N}\sum_{m\in\Z}\big|\mu_{j,m}(t;\beta_0,2)\big|^2+2^{-2(N+1)(\beta-\beta_0)} \big\|\boldsymbol{\mu}(t;\beta_0,2)|f^-_{2,2}\big\|^2\Big)\\
&\le 2\, c\, 2^{-2(N+1)(\beta-\beta_0)}\big\|\boldsymbol{\mu}(t;\beta_0,2)|f^-_{2,2}\big\|^2.
\end{align*}  
Taking supremum over $t\in[0,T]$ and recalling \eqref{eq:equivn-a} we conclude by incorporating the factor 2 into the constant $c$.
\end{proof}
Since the drift in the SDE for $X^N$ is Lipschitz in space and $1/2$-H\"older continuous in time we expect a standard strong convergence rate of $1/2$ for the Euler-Maruyama scheme. This is confirmed in the next proposition where, however, we are particularly interested in the dependence of the multiplicative constants on $N$. By controlling those constants, later on, we will establish an overall rate of convergence for the scheme as we let $(N,m)\to\infty$ at the same time.
\begin{proposition}\label{thm:EM}
Let Assumption \ref{ass: main} hold and let $b^N\in \mathcal{C}^{\frac{1}{2}}([0,T];H^0_{\tilde q_0, q_0})$ for some fixed $N$. Then, as $m\to\infty$, we have
\begin{align}\label{eq:EM}
\sup_{0\le t \le T}\E\left[|X^{N,m}_t\!-\!X^N_t|\right]\le C_2(N) m^{-1}+C_3(N) m^{-\frac{1}{2}} 
\end{align}
with
\begin{align}\label{eq:C23}
\begin{split}
&C_2(N):=c\,\|P_{\eta_N} b^N\|_{\infty,L^\infty}\Big(1 +\|\nabla (P_{\eta_N} b^N)\|_{\infty,L^\infty}\Big),\\
&C_3(N):=c'\,\Big(\|\nabla (P_{\eta_N} b^N)\|_{\infty,L^\infty}+[P_{\eta_N} b^N]_{\frac12,L^\infty}\Big)
\end{split}
\end{align}
and $c,c'>0$ constants independent of $(N,m)$.
\end{proposition}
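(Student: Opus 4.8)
The plan is to run the standard strong-error analysis for the Euler--Maruyama scheme, but keeping explicit track of how the multiplicative constants depend on the regularised drift. Write $\hat b:=P_{\eta_N}b^N$. Since $b^N\in\mathcal C^{\frac12}([0,T];H^0_{\tilde q_0,q_0})$, the bounds \eqref{eq:IZ2} (applied with $s=0$) show that, for the fixed $N$ under consideration, $\hat b$ is bounded, globally Lipschitz in the space variable with Lipschitz constant $\|\nabla\hat b\|_{\infty,L^\infty}$, and $\tfrac12$-H\"older in time with seminorm $[\hat b]_{\frac12,L^\infty}$; in particular all quantities in \eqref{eq:C23} are finite. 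Because \eqref{eq: approximated sde} and \eqref{eq:EuM} are driven by the same Brownian motion $W$, the error $e_t:=X^{N,m}_t-X^N_t$ satisfies
\[
e_t=\int_0^t\Big[\hat b\big(t_{k(s)},X^{N,m}_{t_{k(s)}}\big)-\hat b\big(s,X^N_s\big)\Big]\di s,\qquad\text{so that}\qquad\sup_{0\le r\le t}|e_r|\le\int_0^t\big|\hat b\big(t_{k(s)},X^{N,m}_{t_{k(s)}}\big)-\hat b\big(s,X^N_s\big)\big|\di s.
\]

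Next I would decompose $\hat b(t_{k(s)},X^{N,m}_{t_{k(s)}})-\hat b(s,X^N_s)=(\mathrm I)+(\mathrm{II})+(\mathrm{III})$ with $(\mathrm I):=\hat b(t_{k(s)},X^{N,m}_{t_{k(s)}})-\hat b(t_{k(s)},X^N_{t_{k(s)}})$, $(\mathrm{II}):=\hat b(t_{k(s)},X^N_{t_{k(s)}})-\hat b(s,X^N_{t_{k(s)}})$ and $(\mathrm{III}):=\hat b(s,X^N_{t_{k(s)}})-\hat b(s,X^N_s)$, and estimate each: by the spatial Lipschitz bound $|(\mathrm I)|\le\|\nabla\hat b\|_{\infty,L^\infty}\sup_{0\le r\le s}|e_r|$; by the time-H\"older bound together with $|s-t_{k(s)}|\le T/m$, $|(\mathrm{II})|\le[\hat b]_{\frac12,L^\infty}(T/m)^{1/2}$; and $|(\mathrm{III})|\le\|\nabla\hat b\|_{\infty,L^\infty}|X^N_s-X^N_{t_{k(s)}}|$, where from \eqref{eq: approximated sde}, boundedness of $\hat b$ and $\E|W_s-W_{t_{k(s)}}|=\sqrt{\tfrac2\pi(s-t_{k(s)})}$ one gets $\E[|X^N_s-X^N_{t_{k(s)}}|]\le\|\hat b\|_{\infty,L^\infty}(T/m)+c(T/m)^{1/2}$. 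Setting $\phi(t):=\E[\sup_{0\le r\le t}|e_r|]$ and taking expectations in the integral inequality above then gives
\[
\phi(t)\le\|\nabla\hat b\|_{\infty,L^\infty}\int_0^t\phi(s)\,\di s+T\Big([\hat b]_{\frac12,L^\infty}(T/m)^{1/2}+\|\nabla\hat b\|_{\infty,L^\infty}\|\hat b\|_{\infty,L^\infty}(T/m)+c\,\|\nabla\hat b\|_{\infty,L^\infty}(T/m)^{1/2}\Big),\quad t\in[0,T].
\]

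Finally I would apply Gr\"onwall's lemma to bound $\sup_{0\le t\le T}\E[|X^{N,m}_t-X^N_t|]$ by the non-integral term in the previous display times $e^{\|\nabla\hat b\|_{\infty,L^\infty}T}$, then group the $m^{-1}$ and $m^{-1/2}$ contributions and use $\|\hat b\|_{\infty,L^\infty}\|\nabla\hat b\|_{\infty,L^\infty}\le\|\hat b\|_{\infty,L^\infty}(1+\|\nabla\hat b\|_{\infty,L^\infty})$ to recover \eqref{eq:EM}--\eqref{eq:C23}. The steps are all routine; the only thing requiring genuine care is the constant bookkeeping that isolates $C_2(N)$ and $C_3(N)$ in exactly the claimed form (and keeps the Gr\"onwall factor under control), because this precise $N$-dependence is what will later be combined with Proposition \ref{pr:main2} to pass to the simultaneous limit $(N,m)\to\infty$ in Theorem \ref{thm:global}.
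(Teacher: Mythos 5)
Your argument is the classical pathwise Gr\"onwall analysis, and it does prove \emph{a} bound of order $m^{-1}+m^{-1/2}$ for fixed $N$ — but it cannot deliver the constants in the form \eqref{eq:C23}, and that form is the whole content of the proposition. After your Gr\"onwall step the prefactor is $e^{T\|\nabla (P_{\eta_N}b^N)\|_{\infty,L^\infty}}$ times polynomial expressions in the norms of $P_{\eta_N}b^N$. The statement requires $c,c'$ to be independent of $(N,m)$ and $C_2(N),C_3(N)$ to depend on $P_{\eta_N}b^N$ only \emph{polynomially} through $\|P_{\eta_N}b^N\|_{\infty,L^\infty}$, $\|\nabla(P_{\eta_N}b^N)\|_{\infty,L^\infty}$ and $[P_{\eta_N}b^N]_{\frac12,L^\infty}$; an extra factor $e^{T\|\nabla(P_{\eta_N}b^N)\|_{\infty,L^\infty}}$ cannot be absorbed into either. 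Since $\|\nabla(P_{\eta_N}b^N)\|_{\infty,L^\infty}$ blows up like a negative power of $\eta_N$ (see \eqref{eq:IZ3}), your bound is exponentially worse in $N$, and it would destroy the simultaneous limit $(N,m)\to\infty$ in Theorem \ref{thm:global} — this is exactly the obstruction the paper discusses at the opening of Section \ref{sec:thm2}. Your closing remark that the only delicate point is ``keeping the Gr\"onwall factor under control'' identifies the right issue, but within your scheme there is no way to control it: the Lipschitz constant of the drift sits inside the Gr\"onwall exponent by construction.

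The paper avoids this by not running Gr\"onwall on $X^{N,m}-X^N$ at all. It applies the Zvonkin-type transformation $\hat\varphi(t,x)=x+u^N(t,x)$, where $u^N$ solves the PDE \eqref{eq:approxpde}, and compares $\YY_t=\hat\varphi(t,X^N_t)$ with $\YY^m_t=\hat\varphi(t,X^{N,m}_t)$ (Lemma \ref{lem:virt}). The transformed drift $(1+\lambda)u^N(s,\psi^N(s,\cdot))$ is Lipschitz with a constant controlled uniformly in $N$ by Lemma \ref{lem: grad}, so the Gr\"onwall factor is $e^{3(1+\lambda)T}$, independent of $N$; all the $N$-dependent norms enter only additively, through the total variation of the error process $E^m$ (Lemma \ref{lem:E}), and the local-time term from It\^o--Tanaka is handled by Lemma \ref{lem:localtime}. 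That is how the polynomial dependence in \eqref{eq:C23} is obtained. To repair your proof you would need to replace the naive error equation by this transformed one (or some other device that removes the drift's Lipschitz constant from the exponent); as written, the final step ``recover \eqref{eq:EM}--\eqref{eq:C23}'' does not follow.
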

\noindent The proof of the proposition is given in Section \ref{sec:thm2}.

Combining the results above we obtain the full convergence result, that summarises the theoretical  findings in the paper.
\begin{theorem}\label{thm:global}
Let Assumption \ref{ass: main} hold, let $(b^N)_{N\ge 1}$ be defined as in \eqref{eq:haarexp} (so that Assumption \ref{ass: b^N} holds too) and let $\theta_*:=\tfrac{1}{2}\big[\tfrac{3}{4}-\beta_0(\gamma_0-\tfrac{1}{2})\big]^{-1}$ with $\gamma_0$ as in \eqref{eq:gamma0}. Then, as $m\to \infty$, taking $\eta_N=m^{-\theta_*}$ and $N= \lfloor 2\theta^* \log_2 m \rfloor$ we have
\begin{align}\label{eq:global}
\sup_{0\le t \le T}\E\left[|X^{N,m}_t\!-\!X_t|\right]\le c_\eps\bigg(m^{-\theta_*(\frac{1}{2}-\beta_0)(\gamma_0 - \frac{1}{2})+\eps}\bigg),
\end{align}
where $\varepsilon>0$ can be arbitrarily small and $c_\eps>0$ is a constant depending on $\eps$.
\end{theorem}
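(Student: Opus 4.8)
The proof is essentially a triangle-inequality argument: we split the error into the regularisation error $\E[|X_t - X^N_t|]$, controlled by Proposition \ref{pr:main2} together with the explicit rate for $\|b^N-b\|$ from Proposition \ref{prop:Haar}, and the Euler--Maruyama error $\E[|X^N_t - X^{N,m}_t|]$, controlled by Proposition \ref{thm:EM}. The only real work is to track how both contributions depend on the two parameters $N$ and $m$ (and on the mollification scale $\eta_N$), and then to choose $\eta_N$ and $N$ as functions of $m$ so as to balance the resulting powers of $m$.

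\medskip
\noindent\textbf{Step 1: the regularisation error.} From Proposition \ref{pr:main2}, applied with a pair $(\beta,q)=(\beta,2)$ for some $\beta\in(\beta_0,\tfrac12)$, we have $\sup_t\E[|X^N_t-X_t|]\le C_\gamma \|P_{\eta_N}b^N-b\|^{2\gamma-1}_{\infty,H^{-\beta}_2}$ for any $\tfrac12<\gamma<\gamma_0$. By the triangle inequality, $\|P_{\eta_N}b^N-b\|_{\infty,H^{-\beta}_2}\le \|P_{\eta_N}b^N-b^N\|_{\infty,H^{-\beta}_2} + \|b^N-b\|_{\infty,H^{-\beta}_2}$; the first term is bounded using the smoothing estimate \eqref{eq:IZ1} (gaining a factor $\eta_N^{(\beta-\beta_0)/2}$ against $\|b^N\|_{\infty,H^{-\beta_0}_2}$, which is uniformly bounded in $N$ by Proposition \ref{prop:Haar}), and the second term is bounded by $c\,2^{-(N+1)(\beta-\beta_0)}\|b\|_{\infty,H^{-\beta_0}_2}$ from \eqref{eq:ratebN}. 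So the regularisation error is of order $\big(\eta_N^{(\beta-\beta_0)/2}+2^{-(N+1)(\beta-\beta_0)}\big)^{2\gamma-1}$.

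\medskip
\noindent\textbf{Step 2: the Euler--Maruyama error and its $N$-dependence.} Proposition \ref{thm:EM} gives $\sup_t\E[|X^{N,m}_t-X^N_t|]\le C_2(N)m^{-1}+C_3(N)m^{-1/2}$, so I must estimate $\|P_{\eta_N}b^N\|_{\infty,L^\infty}$, $\|\nabla(P_{\eta_N}b^N)\|_{\infty,L^\infty}$ and $[P_{\eta_N}b^N]_{\tfrac12,L^\infty}$ in terms of $\eta_N$. Here the estimates \eqref{eq:IZ2} are the key tool: applied with $s=\beta_0$ and $\nu$ slightly above $1/q_0$ (so that $s+\nu$ is slightly above $\beta_0+1/q_0<\tfrac12+\tfrac14=\tfrac34$), they give $\|P_{\eta_N}b^N\|_{\infty,L^\infty}\le c\,\eta_N^{-(\beta_0+\nu)/2}\|b^N\|_{\infty,H^{-\beta_0}_{q_0}}$ and $\|\nabla(P_{\eta_N}b^N)\|_{\infty,L^\infty}\le c\,\eta_N^{-(1+\beta_0+\nu)/2}\|b^N\|_{\infty,H^{-\beta_0}_{q_0}}$; the time-H\"older seminorm is handled similarly using that $b^N$ is $\kappa$-H\"older in time with $\kappa>\tfrac12$, together with \eqref{eq:IZ2}. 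Since $\|b^N\|_{\infty,H^{-\beta_0}_{q_0}}$ is uniformly bounded in $N$, the dominant blow-up comes from the gradient term, giving $C_3(N)\le c\,\eta_N^{-(1+\beta_0+\nu)/2}$ and $C_2(N)\le c\,\eta_N^{-(1+2\beta_0+2\nu)/2}$ up to the $\eps$ one picks up from $\nu>1/q_0$. Letting $\nu\downarrow 1/q_0$ and recalling $\gamma_0=1-\beta_0-1/q_0$, one rewrites the exponent $\beta_0+\nu$ in terms of $1-\gamma_0$; this is where the combination $\tfrac34-\beta_0(\gamma_0-\tfrac12)$ appearing in $\theta_*$ will emerge.

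\medskip
\noindent\textbf{Step 3: optimising over $\eta_N$ and $N$.} With $\eta_N=m^{-\theta}$ and $N\sim c\,\theta\log_2 m$ (so that $2^{-N(\beta-\beta_0)}$ is a negative power of $m$ comparable to $\eta_N^{(\beta-\beta_0)/2}$, making the two contributions to the regularisation error of the same order up to constants), the total error becomes a sum of powers $m^{-\theta\,(\text{something})(2\gamma-1)}$ and $m^{-1/2+\theta\,(\text{something})}$. Equating the two exponents determines the optimal $\theta=\theta_*$; then letting $\beta\downarrow\beta_0$ — which costs only an arbitrarily small $\eps$ in the exponent, via the $2^{-(N+1)(\beta-\beta_0)}$ term and the $\eta_N^{(\beta-\beta_0)/2}$ term both degenerating — and $\gamma\uparrow\gamma_0$, one recovers the stated rate $m^{-\theta_*(\tfrac12-\beta_0)(\gamma_0-\tfrac12)}$ up to $m^{-\eps}$. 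I would also need $N=2\theta^*\log_2 m$ exactly (not merely up to constants) to match the bookkeeping in \eqref{eq:global}, so the constant in front of $\log_2 m$ has to be chosen to make $2^{-N(\beta-\beta_0)}$ land at the right power; one checks this reduces to the stated choice after tracking the factor $2\gamma-1$ and the fact that $(\beta-\beta_0)$ is absorbed into $\eps$.

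\medskip
\noindent\textbf{Main obstacle.} The conceptual content is routine; the genuine difficulty is purely bookkeeping — keeping simultaneous track of the exponents in $m$ coming from (a) the smoothing gain $\eta_N^{(\beta-\beta_0)/2}$, (b) the truncation rate $2^{-N(\beta-\beta_0)}$, (c) the blow-up $\eta_N^{-(1+\beta_0+\nu)/2}$ of the Lipschitz constant, and (d) the outer exponent $2\gamma-1$ from Proposition \ref{pr:main2}, and then verifying that the balancing choice is exactly $\theta_*=\tfrac12[\tfrac34-\beta_0(\gamma_0-\tfrac12)]^{-1}$ with the stated $N$. The algebra simplifies considerably once one fixes $\nu$ just above $1/q_0$ and writes $\beta_0+1/q_0=1-\gamma_0$, so that $1+\beta_0+\nu\approx 2-\gamma_0+\beta_0$ and the EM exponent becomes $-\tfrac12+\tfrac{\theta}{2}(2-\gamma_0+\beta_0)$ while the regularisation exponent becomes $-\tfrac{\theta}{2}(\beta-\beta_0)(2\gamma-1)\cdot(\ldots)$; one has to be careful that the $(\beta-\beta_0)$ factors do not force the rate to zero before sending $\beta\downarrow\beta_0$, which is why they are collected into the final $\eps$.
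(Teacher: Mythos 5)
Your Steps 1 and 2 follow the paper's argument: triangle inequality, Proposition \ref{pr:main2} with $q=2$ plus Proposition \ref{prop:Haar} for the regularisation error, Proposition \ref{thm:EM} with the constants $C_2(N),C_3(N)$ bounded via \eqref{eq:IZ2} with $s=\beta_0$, $\nu$ just above $1/q_0$, and the identity $\beta_0+1/q_0=1-\gamma_0$; the uniform bound on $\|b^N\|_{\infty,H^{-\beta_0}_{q_0}}$ from Assumption \ref{ass: b^N} is used exactly as in the paper. However, Step 3 contains a genuine error: you propose to conclude by letting $\beta\downarrow\beta_0$ and claim that the factors $(\beta-\beta_0)$ can be ``collected into the final $\eps$''. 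They cannot. With $\eta_N=m^{-\theta}$ and $N\sim\theta\log_2 m$, the regularisation error behaves like $m^{-\theta(\beta-\beta_0)(\gamma-\frac12)}$, so $(\beta-\beta_0)$ multiplies the entire exponent of $m$: sending $\beta\downarrow\beta_0$ drives that exponent to $0$ and you are left with no convergence rate at all, not with the stated rate up to $m^{-\eps}$. Moreover, if you balance the exponents first and then let $\beta\downarrow\beta_0$, the balancing value of $\theta$ you obtain is $\beta$-dependent and degenerates to $\tfrac{1}{2-\gamma_0}\neq\theta_*$, while the balanced exponent itself tends to $0$.

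The correct move, which is what the paper does, is the opposite limit: choose $\beta$ as \emph{large} as possible, i.e.\ just below $\tfrac12$, so that $(\beta-\beta_0)\uparrow(\tfrac12-\beta_0)$, and simultaneously $\gamma$ just below $\gamma_0$; the arbitrarily small $\eps$ in \eqref{eq:global} accounts precisely for $\beta<\tfrac12$, $\gamma<\gamma_0$ and $\nu>1/q_0$ being strict. Balancing $m^{-\theta(\frac12-\beta_0)(\gamma_0-\frac12)}$ against $m^{-\frac12+\frac{\theta}{2}(2-\gamma_0)}$ then yields
\[
\theta_*=\tfrac12\Big[(\tfrac12-\beta_0)(\gamma_0-\tfrac12)+1-\tfrac{\gamma_0}{2}\Big]^{-1}
=\tfrac12\Big[\tfrac34-\beta_0(\gamma_0-\tfrac12)\Big]^{-1},
\]
and the choice $N=2\theta_*\log_2 m$ makes the truncation term $2^{-N(\beta-\beta_0)(2\gamma-1)/2}$ comparable to $\eta_N^{(\beta-\beta_0)(\gamma-\frac12)}$, as in the paper. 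With this correction your argument coincides with the paper's proof; as written, the limit $\beta\downarrow\beta_0$ makes the conclusion fail.
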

\begin{figure}
\centering
\begin{subfigure}[b]{0.475\textwidth}
\centering
	\includegraphics[width =\linewidth]{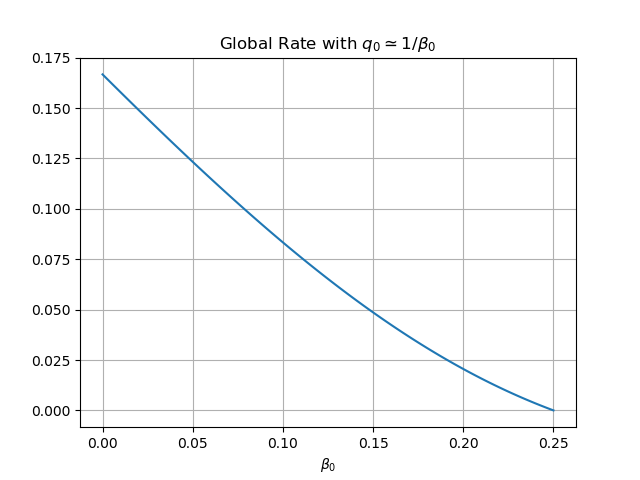}
	\caption{Convergence rate for  $q_0 \approx 1/\beta_0$
	 }
	\label{fig: rate of beta}
\end{subfigure}
\begin{subfigure}[b]{0.475\textwidth}
\centering
	\includegraphics[width =  \linewidth]{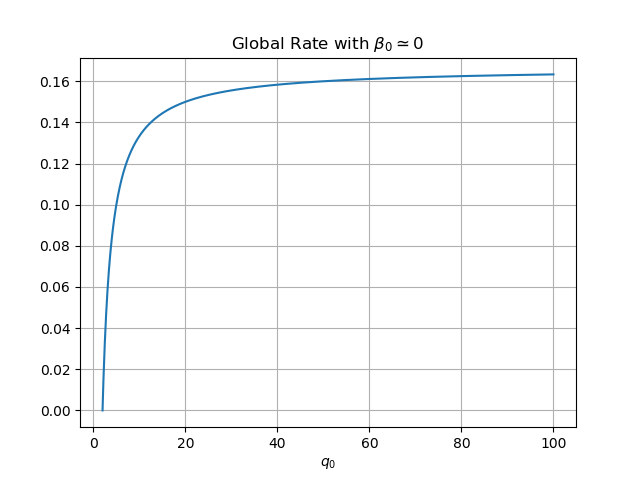}
	\caption{Convergence rate for   $\beta_0\approx0$}
	\label{fig: rate of q0}
\end{subfigure}
\caption{Convergence rate of the scheme as function of the parameter $\beta_0\in(0, 1/4)$ (left panel) and $q_0>4$ (right panel) when the other parameter is fixed.} 
\label{fig:rate}
\end{figure}
Before proving the theorem, we offer some basic insight into the meaning of the rate in \eqref{eq:global} (in Figure \ref{fig:rate} we also plot two examples).
\begin{remark}\label{rem:Lp}
\, 
\begin{itemize}
\item If $q_0\approx 1/\beta_0$, which is the largest possible $q_0$, then the rate decreases as $\beta_0$ increases, and the best rate is obtained when $\beta_0$ is close to zero. The rate  is illustrated in Figure~\ref{fig: rate of beta} as a function of $\beta_0\in (0,\frac14)$. 
\item For $\beta_0\approx 0$ we have $\tilde q_0\approx1$ and we can pick any $4<q_0<\infty$. Then we obtain a convergence rate for SDEs with drift in $\mathcal{C}^{\kappa}([0,T];L^{1} \cap L^{q_0})$ for $\kappa >\tfrac{1}{2}$.  The rate  is illustrated in Figure~\ref{fig: rate of q0} as a function of $q_0 >4$. While existence of strong solutions for SDEs with drift in $L^p ([0,T];L^q)$-spaces was obtained by Krylov and R\"ockner in \cite{Kry-Ro}, we are not aware of convergence rates results for numerical schemes if $b\in\mathcal C^{\kappa}([0,T]; L^{q_0})$ and $1<q_0<\infty$.
\item In the extreme case when $\beta_0\approx 0$ and $q_0\to\infty$ (i.e.\ $b\in\mathcal{C}^\kappa([0,T]; L^1\cap L^\infty)$ with $\kappa>\frac{1}{2}$) we obtain a convergence rate of $m^{-1/6}$. However, if we assumed $b\in\mathcal C^\kappa([0,T]; L^\infty)$ from the start, we would have been able to obtain a better rate from the Euler-Maruyama scheme using ideas from Dareiotis and Genrencs\'er \cite{dareiotis2020} (notice though that the constants in \cite[Lemma 2.2]{dareiotis2020} depend exponentially on $\|b\|_{L^\infty}$). The approach we take in Section \ref{sec:thm2} allows us to avoid that the constants in Proposition \ref{thm:EM} depend exponentially on the $L^\infty$-norm of $b^N$, which is essential when $b$ is a distribution. Clearly, if $b\in \mathcal{C}^\kappa([0,T]; L^\infty)$ that caution is no longer needed.
\end{itemize}
\end{remark}

\begin{proof}[Proof of Theorem \ref{thm:global}]
Fix $\eps>0$ and let $c_\eps>0$ be a constant that may vary from line to line, possibly depending on $\|b\|_{\infty,H^{-\beta_0}_{\tilde q_0,q_0}}$ and $\eps>0$ but independent of $N$ and $m$. In the rest of this proof we will use Proposition \ref{pr:main2} with $\gamma\!=\!\gamma_0\!-\!\tfrac{1}{2}\eps$ so that the constant $C_{\gamma}$ is absorbed in $c_\eps>0$.

Using triangular inequality, \eqref{eq:main2} from Proposition \ref{pr:main2} (with $q=2$) and \eqref{eq:EM} from Proposition \ref{thm:EM} we obtain
\begin{align}\label{eq:globr0}
&\sup_{0\le t \le T}\E\left[|X^{N,m}_t\!-\!X_t|\right]\\
&\le c_\eps\left(\|P_{\eta_N}(b^N-b)\|^{2\gamma_0-1-\eps}_{\infty,H^{-\beta}_{2}}+\|P_{\eta_N}\,b-b\|^{2\gamma_0-1-\eps}_{\infty,H^{-\beta}_{2}}+C_2(N)m^{-1}+C_3(N)m^{-\frac{1}{2}}\right).\notag
\end{align}

The $L^\infty$-norms appearing in the constants $C_2(N)$ and $C_3(N)$ in \eqref{eq:C23} can be estimated further by using \eqref{eq:IZ2}. 
Recall that $(b^N)_{N\ge 1}$ fulfils Assumptions \ref{ass: b^N} thanks to Proposition \ref{prop:Haar}. The most favourable estimates in \eqref{eq:IZ2} are obtained for $r=q_0$, $s=\beta_0$ and $\nu=\tfrac{1}{q_0}+\eps$. Then, we have
\begin{equation}\label{eq:IZ3}
\begin{split}
&\|P_{\eta_N} b^N\|_{\infty,L^\infty}\le c\,\eta_N^{-\frac{1}{2}(\beta_0+\frac{1}{q_0}+\eps)}\|b^N\|_{\infty,H^{-\beta_0}_{q_0}}\le c\,\eta_N^{-\frac{1}{2}(1-\gamma_0+\eps)}\|b\|_{\infty,H^{-\beta_0}_{q_0}}\\
&\|\nabla(P_{\eta_N} b^N)\|_{\infty,L^\infty}\le c\,\eta_N^{-\frac{1}{2}(1+\beta_0+\frac{1}{q_0}+\eps)}\|b^N\|_{\infty,H^{-\beta_0}_{q_0}}\le c\,\eta_N^{-\frac{1}{2}(2-\gamma_0+\eps)}\|b\|_{\infty,H^{-\beta_0}_{q_0}},\\
&\big[P_{\eta_N}b^N\big]_{\frac12,L^\infty}\le c\,\eta_N^{-\frac{1}{2}(\beta_0+\frac{1}{q_0}+\eps)}\big[b^N\big]_{\frac12,H^{-\beta_0}_{q_0}}\le c\,\eta_N^{-\frac{1}{2}(1-\gamma_0+\eps)}\big[b\big]_{\frac12,H^{-\beta_0}_{q_0}}, 
\end{split}
\end{equation}
where the final inequality in each of the above expressions follows from the convergence $\lim_{N\to\infty}b^N = b$ in $\mathcal{C}^{\frac{1}{2}}([0,T];H^{-\beta_0}_{q_0})$ and we used $\beta_0+1/q_0=1-\gamma_0$ and $1+\beta_0+1/q_0=2-\gamma_0$.

Using also \eqref{eq:IZ1} to bound the first two terms on the right-hand side of \eqref{eq:globr0} (where $c_\eps$ may change from line to line)  we have
\begin{align}\label{eq:E} 
\nonumber
&\sup_{0\le t \le T}\E\left[|X^{N,m}_t\!-\!X_t|\right]\\ 
&\le c_\eps\bigg(\|b^N-b\|^{2\gamma_0-1-\eps}_{\infty,H^{-\beta}_{2}}+\eta_N^{(\beta-\beta_0)(\gamma_0-\frac{1}{2})-\eps}\\\nonumber
&\:\:\qquad+\eta_N^{-\frac{1}{2}(3-2\gamma_0+\eps)}m^{-1}+\eta_N^{-\frac{1}{2}(2-\gamma_0+\eps)}m^{-\frac{1}{2}}\bigg),
\end{align}
where for the final two terms we selected the leading order in $\eta_N$ by using that $\eta_N\in(0,1)$, with no loss of generality, and $1+\beta_0+\tfrac{1}{q_0}\ge 2(\beta_0+\tfrac{1}{q_0})$, since $\beta_0\in(0,\tfrac{1}{4})$.

Thanks to Proposition \ref{prop:Haar}, and abusing slightly the notation by letting $\eps>0$ vary from the first to the second inequality, we have 
\begin{align}\label{eq:E2}
\|b^N-b\|^{2\gamma_0-1-\eps}_{\infty,H^{-\beta}_{2}}&\le c\, 2^{-(N+1)(\beta-\beta_0) (2\gamma_0-1-\eps)} \leq c\, 2^{-N\frac12 (\beta-\beta_0)(\gamma_0-\frac12)+\eps}.
\end{align}
The aim is to let $N$ and $m$ diverge to infinity and $\eta_N\to 0$ at the same time. In order to do so we  choose suitable $N$   and $\eta_N$ depending on $m$. Take $\eta_N=m^{-\theta}$ for some $\theta>0$ to be determined. The last three terms in \eqref{eq:E} read
\begin{align}\label{eq:mm}
 m^{-\theta(\beta-\beta_0)(\gamma_0-\frac{1}{2})+\eps}+m^{-1+\frac{1}{2}\theta(3-2\gamma_0+\eps)}+m^{-\frac{1}{2}+\frac{1}{2}\theta(2-\gamma_0+\eps)}
\end{align}
and, as $m\to\infty$, the leading terms are the first and last one. By comparing \eqref{eq:E2} and \eqref{eq:E} we notice that there is no loss of generality in choosing $N = \lfloor2 \theta \log_2 m \rfloor$. Finally, plugging \eqref{eq:E2} and \eqref{eq:mm} back into \eqref{eq:E}, ignoring terms of lower order in $m$, we obtain
\begin{align*}
\sup_{0\le t \le T}\E\left[|X^{N,m}_t\!-\!X_t|\right]&\le c_\eps\bigg(m^{-\theta(\beta-\beta_0)(\gamma_0-\frac{1}{2})+\eps}+m^{-\frac{1}{2}+\frac{1}{2}\theta(2-\gamma_0+\eps)}\bigg)\\
&\le c_\eps\bigg(m^{-\theta(\frac{1}{2}-\beta_0)(\gamma_0-\frac{1}{2})+\eps}+m^{-\frac{1}{2}+\frac{1}{2}\theta(2-\gamma_0)+\eps}\bigg),
\end{align*}
where for the second inequality we have chosen the best possible $\beta$, which is just below $\frac12$, and with a slight abuse of notation we have allowed $\eps$ to vary from line to line. It remains to select $\theta>0$ that gives the fastest convergence rate. Notice that the first term on the right-hand side of the expression above is decreasing in $\theta$ whereas the second one is increasing. Then the optimum is attained when the exponents are equal and we get
\[
\theta_*=\tfrac{1}{2}\Big[\tfrac{3}{4}-\beta_0(\gamma_0-\tfrac{1}{2})\Big]^{-1},
\]
as claimed.
\end{proof}

\begin{remark}
For practical use of our numerical scheme one must compute the coefficients $\mu_{j,m}$ from \eqref{eq:mujm}. It is shown in Theorem \ref{thm: faber I} and Remark \ref{rmk: haar faber link} that such computation is very easy when $b(t,\cdot)$ is supported on a bounded interval on $\R$. 
\end{remark}

\begin{remark}
When implementing the scheme, the coefficients in the formula for $b^N$ are computed offline and stored in the memory at the beginning of the algorithm. The complexity of the algorithm is determined by the number of operations involving such coefficients and the Haar functions (multiplications and summation) and by the number of time steps in the Euler scheme. In particular, we count the number of terms in the sum and we multiply that by the number of time-steps in the Euler scheme. From \eqref{eq: scheme} and \eqref{eq:haarexp}, the overall complexity of the algorithm is $O(m N \tau(N))$. Hence, by taking $N= 2\theta^* \log_2 m$ as in the statement of Theorem \ref{thm:global} we obtain a complexity of $O(m \log_2 m\ \tau(\lfloor 2\theta^* \log_2 m\rfloor))$. Unfortunately $\tau(N)$ can be difficult to compute in general but, in the special case of $b$ supported on a bounded interval $I$, we have 
\begin{equation*}
b(t) = \mu_0(t) h_0 + \sum_{j=0}^{+\infty}\sum_{m=0}^{2^j-1}\mu_{j,m}(t)2^{-j\left({-\beta}-\frac{1}{q}\right)}h_{j,m}
\end{equation*} 
(see \eqref{eq: haar represent Ia} in Appendix \ref{sc: haar}, where we take $I=(0,1)$ for simplicity and with no loss of generality). 
Then we can define $b^N$ as
\begin{equation}
b^N(t) = \mu_0(t) h_0 + \sum_{j=0}^{N}\sum_{m=0}^{2^j-1}\mu_{j,m}(t)2^{-j\left({-\beta}-\frac{1}{q}\right)}h_{j,m},
\end{equation}
and, in the proof of Proposition \ref{prop:Haar}, we have 
\begin{align*}
\big\|b^N(t)-b(t)\big\|^2_{H^{-\beta}_2}\le c\sum_{j>N}\sum_{m=0}^{2^j-1}\big|\mu_{j,m}(t;\beta,2)\big|^2\le c\, 2^{-2(N+1)(\beta-\beta_0)}\big\|\boldsymbol{\mu}(t;\beta_0,2)|f^-_{2,2}\big\|^2.
\end{align*}
In that case the complexity is $O(m 2^N)$ hence by taking $N= \lfloor 2\theta^* \log_2 m \rfloor$ we obtain a complexity of $O(m^2)$. Notice that the computation of the semigroup in \eqref{eq:CDF} does not modify the complexity.
\end{remark}

%%%%%%%%%%%%%%%%%%%%%%%%%%%%%%%%%%%%%%%%%%%%%%%%%%%%%%%%%%%%%%%%%

\section{Background material on virtual solutions}\label{sec:background}

As anticipated, the proofs of Proposition \ref{pr:main2} and Proposition \ref{thm:EM} rely upon a few technical lemmas. To set out clearly our arguments and keep the exposition self-contained it is convenient to review and complement some results from \cite{Fla-Iss-Rus-2017}. 

We will work in the framework of \cite{Fla-Iss-Rus-2017} but we restrict our attention to $[0,T]\times\R$ rather than working with $[0,T]\times\R^d$ as in the original paper. Throughout this section we make the following standing assumption. 
\begin{assumption}\label{ass:mainFIR}
Let $ \beta\in\left(0,\frac{1}{2}\right)$, fix $\tilde{q}:=\frac{1}{1-\beta}$ and let $q\in\big(\tilde{q},\frac{1}{\beta}\big)$. We take $b\in  \mathcal{C}([0,T];H^{-\beta}_{\tilde q, q}) $.
\end{assumption} 
Notice that Assumption \ref{ass:mainFIR} is implied by Assumption \ref{ass: main} (with $(\beta_0,q_0)$ instead of $(\beta,q)$ and $q>4$). It was shown in \cite[Theorem 28]{Fla-Iss-Rus-2017} that under Assumption \ref{ass:mainFIR} for every $x\in \mathbb R$ there exists a unique in law \emph{virtual solution} of \eqref{eq: original sde}. A virtual solution of \eqref{eq: original sde}  is given in terms of a stochastic basis
$\left(  \Omega,\mathcal{F},\mathbb{F},\mathbb P,W\right)$ and an $\mathbb{F}$-adapted, continuous
stochastic process $X:=(X_{t})_{t\in[0,T]}$ (shortened as $\left(X,\mathbb{F}\right)$)
such that the integral equation
\begin{align}
\label{eq: virtual solution}
X_{t}=&\ x+u(0,x)-u(t,X_{t})+(\lambda+1)\int_{0}^{t}u(s,X_{s})\di
s\\ \nonumber
&+\int_{0}^{t} {( \nabla u(s,X_{s}) + 1)}  \di W_{s}
\end{align}
holds for all $t\in[0,T]$, with probability one. Here $u$ is the mild solution of the following parabolic Kolmogorov-type PDE 
\begin{equation}\label{eq: pde}
\begin{cases}
\partial_t u + \frac{1}{2}\Delta u + b\nabla u - (\lambda+1)u = -b\ &\mathrm{on}\ [0,T]\times\R\\
u(T) = 0\ &\mathrm{on}\ \R
\end{cases}
\end{equation}
with $\lambda>0$. The mild solution $u$   is unique in $\mathcal{C}([0,T];H_p^{1+\delta})$, for any $(\delta,p)\in \mathcal{K}(\beta,q)$, where the set $\mathcal{K}(\beta,q)$ is defined as 
\begin{align}\label{eq:K}
\mathcal{K}(\beta,q):=\{(\delta,p)\ |\ \beta<\delta<1-\beta,\ \tfrac{1}{\delta}<p<q\}.
\end{align}   
The set $\mathcal{K}(\beta,q)$ is drawn in  Figure \ref{fig: the set K} for the reader's convenience and it is not empty thanks to Assumption \ref{ass:mainFIR}. Notice that the stochastic integral that appears in \eqref{eq: virtual solution} is well-defined thanks to fractional Morrey's inequality \eqref{eq:morrey}.
\begin{remark}[Uniqueness]\label{rem:uni}
We remark that, thanks to the shape of $\mathcal K(\beta,q)$ and to the embedding \eqref{eq:emb},  given two couples $(\delta_1,p_1), (\delta_2, p_2) \in \mathcal{K}(\beta,q)$, it is always possible to find $ (\delta, p) \in \mathcal{K}(\beta,q)$ such that $H^{1+\delta_1}_{p_1}\subset H^{1+\delta}_{p}$ and $H^{1+\delta_2}_{p_2}\subset H^{1+\delta}_{p}$, see  Figure \ref{fig: the set K}. Since the solution $u$ to \eqref{eq: pde} is unique in the space $ H^{1+\delta}_{p}$, it follows that it must coincide with the solutions found in the spaces $H^{1+\delta_1}_{p_1}$ and $H^{1+\delta_2}_{p_2}$. Hence, the solution of \eqref{eq: pde} is unique in the whole triangle $\mathcal K(\beta,q)$.
\end{remark}
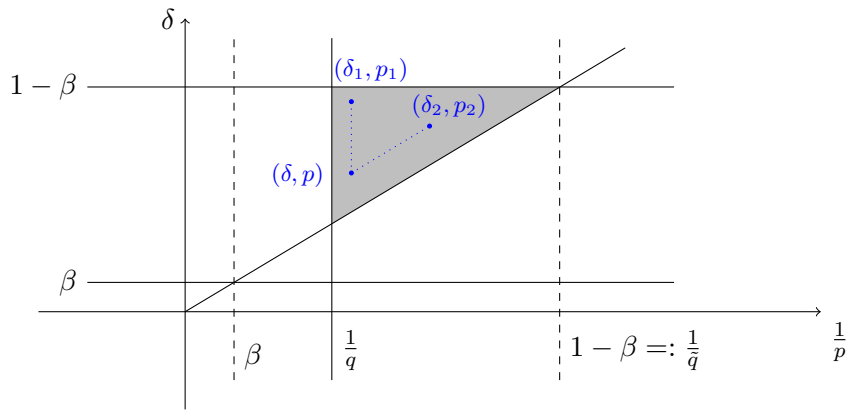
\begin{figure}
	\centering
	\begin{tikzpicture}[scale = 1.3]
	% axes	
	\draw[->] (-1,0)--(7,0) node[below right] {$\frac1p$};
	\draw [->] (0.5,-1)--(0.5,3) node[left] {$\delta$};
	
	% triagle	
	\fill[mygray] (2, .9)--(2,2.3)--(4.33,2.3)--(2, .9);
	\draw (-.5,.3)  node[anchor=east]{$\beta$}--(5.5,.3);
	\draw (-.5,2.3) node[anchor=east]{$1-\beta$} --(5.5,2.3);
	\draw (2,-.7) node[anchor=south west]{$ \frac1q$} --(2,2.8);
	\draw[dashed] (1,-.7) node[anchor=south west]{${ \beta}$} --(1,2.8);
	\draw[dashed] (4.33,-.7) node[anchor=south west]{$1-\beta=:\frac{1}{\tilde q}$} --(4.33,2.8);
	\draw (.5,0)--(5,2.7);  % I am drawing y = 3/5 x - 3/10
	
	%blue embedding
	\draw [fill, blue](2.2,1.42)  circle [radius=0.02];
	\node[blue] at (1.65,1.4) {\footnotesize{$(\delta, p)$}};
	\draw [fill, blue](2.2,2.15)  circle [radius=0.02];
	\node[blue, anchor = north] at (2.4,2.7) {\footnotesize{$(\delta_1, p_1)$}};
	\draw [fill, blue](3,1.9)  circle [radius=0.02];
	\node[blue] at (3.2,2.1) {\footnotesize{$(\delta_2, p_2)$}};
	\draw[blue, dotted]  (2.2,1.42) --(3,1.9);
	\draw[blue, dotted]  (2.2,1.42) --(2.2,2.15);	
	\end{tikzpicture}

	\caption{An illustration of the set $\mathcal{K}(\beta, q)$ (figure modified from \cite{Fla-Iss-Rus-2017}). Given a couple $(\beta,q)$ according to Assumption \ref{ass:mainFIR}, there exists a unique mild solution $u\in\mathcal C([0,T]; H^{1+\delta}_p)$ for the PDE \eqref{eq: pde}, for all $(\delta, p)$ in the grey triangle.
For any two points $(\delta_1,p_1), (\delta_2, p_2) \in \mathcal{K}(\beta,q)$, it is always possible to find $ (\delta, p) \in \mathcal{K}(\beta,q)$ such that $H^{1+\delta_1}_{p_1}\subset H^{1+\delta}_{p}$ and $H^{1+\delta_2}_{p_2}\subset H^{1+\delta}_{p}$, see dotted lines for embeddings. 	
	}\label{fig: the set K}
\end{figure}

It is worth  noticing that the concept of virtual solution follows a Zvonkin-type transformation based on heuristic application of It\^o's formula to $u(t, X_t)$. This allows to replace the drift term $b(t, X_t) \di t$ in \eqref{eq: original sde} with the terms in \eqref{eq: virtual solution} depending on $u$ and $\nabla u$. The reader might have noticed that the PDE \eqref{eq: pde} and the virtual SDE \eqref{eq: virtual solution} depend on an extra parameter $\lambda$, while the original SDE \eqref{eq: original sde} does not. This is due to a technical step in the proof, that leads to good properties of $u$. However, it is possible to show that the virtual solution is independent of $\lambda$, as shown in \cite[Section 3.3]{Fla-Iss-Rus-2017}.

For the numerical scheme illustrated  in Section \ref{sec: scheme} we also need to consider the approximating PDE
\begin{equation}\label{eq:approxpde}
\begin{cases}
\partial_t u^N + \frac{1}{2}\Delta u^N + a^N\nabla u^N - (\lambda+1)u^N = -a^N\ &\mathrm{on}\ [0,T]\times\R\\
u^N(T) = 0\ &\mathrm{on}\ \R,
\end{cases}
\end{equation}
where $a^N:=P_{\eta_N} b^N$, for each $N\ge 1$. 

We will now review the arguments that guarantee existence, uniqueness and regularity of the solutions to \eqref{eq: pde} and \eqref{eq:approxpde}. 
Under Assumption \ref{ass:mainFIR} and for $(\delta,p) \in \mathcal K(\beta,q)$, \cite[Theorem 14]{Fla-Iss-Rus-2017} guarantees that for each $\lambda>0$ there exists a unique solution $u_\lambda\in\mathcal{C}([0,T];H^{1+\delta}_p)$ to \eqref{eq: pde}.  
Since the time derivative and the second spatial derivative of $u_\lambda$ are not well defined, $u_\lambda$ is a so-called {\em mild} solution  (for details see, e.g., \cite{Elena}), and it is obtained as a fixed point in the space $\mathcal{C}([0,T];H^{1+\delta}_p)$  equipped with the norm $\norme{\cdot }^{(\rho)}_{\infty,H^{1+\delta}_p}$, with $\rho\!>\!\lambda$ sufficiently large. 
Using fractional Morrey's inequality \eqref{eq:morrey} it is possible to embed the fractional Sobolev space $H_p^{1+\delta}$ in smoother spaces. In particular we have 
\begin{align}\label{holder}
u_\lambda\in\mathcal{C}([0,T];\mathcal{C}^{1,\gamma}),\quad\text{with $\gamma=\delta-1/p$.}
\end{align} 
Analogously, \eqref{eq:approxpde} admits a unique solution $u^N_\lambda\in\mathcal{C}([0,T];H^{1+\delta}_p)$ (regularity of $u^N_\lambda$ could of course be upgraded to $\mathcal{C}^{1,2}([0,T)\times\mathbb{R})\cap \mathcal{C}([0,T]\times\mathbb{R})$ by virtue of higher regularity of $a^N$ but this will not be needed for our purposes).

Next, \cite[Lemma 20]{Fla-Iss-Rus-2017} gives useful bounds for the gradient of $u_\lambda$ and $u^N_\lambda$. We give a statement which is adapted to our notation\footnote{We note that there is a typo in the statement of \cite[Lemma 20]{Fla-Iss-Rus-2017}. Indeed it can be easily checked from the proof that the condition $\rho\!<\!\lambda$ is not needed therein.}.
\begin{lemma}\label{lem: grad}
	Let $(\delta,p)\in {\mathcal{K}}(\beta,q)$. There exists $\lambda_0\!>\!0$ such that, given any $\lambda\!>\!\lambda_0$, letting $u\!=\!u_\lambda$ and $u^N\!=\!u^N_\lambda$ be the mild solutions in $\mathcal{C}([0,T];H_p^{1+\delta})$ to the corresponding problems \eqref{eq: pde} and \eqref{eq:approxpde}, respectively, we have
	\begin{align}
	&{\sup_{(t,x)\in[0,T]\times\mathbb{R}}} |\nabla u(t,x)| \leq\tfrac{1}{2}\qquad\text{and}\qquad {\sup_{(t,x)\in[0,T]\times\mathbb{R}}} |\nabla u^N(t,x)|\leq\tfrac{1}{2}.
	\end{align}
\end{lemma}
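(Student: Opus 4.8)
The plan is to exploit the mild (Duhamel) representation of the fixed point $u=u_\lambda$ together with the dissipative term $-(\lambda+1)u$, which produces an exponential weight $e^{-\lambda(s-t)}$ in front of the source: by taking $\lambda$ large, the $\mathcal{C}([0,T];H^{1+\delta}_p)$-norm of $u$ can be made arbitrarily small, and the pointwise gradient bound then follows from fractional Morrey's inequality \eqref{eq:morrey}. Concretely, one writes the mild solution of \eqref{eq: pde} as
\[
u(t)=\int_t^T e^{-\lambda(s-t)}\,P_{s-t}\big(b(s)\nabla u(s)+b(s)\big)\,\di s ,
\]
and analogously for $u^N$ with $b$ replaced by $a^N=P_{\eta_N}b^N$ in \eqref{eq:approxpde}.

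To estimate the right-hand side in $H^{1+\delta}_p$ two ingredients are used: (a) a pointwise-product estimate $\|b\,\nabla u\|_{H^{-\beta}_{p'}}\le c\,\|b\|_{H^{-\beta}_q}\,\|u\|_{H^{1+\delta}_p}$, which makes sense because $\nabla u\in\mathcal{C}^{0,\gamma}$ with $\gamma=\delta-1/p$ and the exponents can be chosen within $\mathcal{K}(\beta,q)$ (under Assumption \ref{ass:mainFIR}) so that $\gamma>\beta$, the $L^p$-indices being matched via \eqref{eq:emb}; and (b) the analytic-smoothing bound $\|P_\tau f\|_{H^{1+\delta}_p}\le c\,\tau^{-a}\|f\|_{H^{-\beta}_{p'}}$ with $a=\tfrac12(1+\delta+\beta)$, where the crucial point is $a<1$, which is exactly the constraint $\delta<1-\beta$ encoded in $\mathcal{K}(\beta,q)$ in \eqref{eq:K}. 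Using again \eqref{eq:morrey} to write $\|\nabla u(s)\|_{\mathcal{C}^{0,\gamma}}\le c\,\|u(s)\|_{H^{1+\delta}_p}$, and setting $M:=\sup_{t\in[0,T]}\|u(t)\|_{H^{1+\delta}_p}<\infty$, one obtains
\[
M\le c\,\|b\|_{\infty,H^{-\beta}_q}\Big(\int_0^\infty e^{-\lambda\tau}\tau^{-a}\,\di\tau\Big)\,(M+1)=c\,\|b\|_{\infty,H^{-\beta}_q}\,\Gamma(1-a)\,\lambda^{-(1-a)}\,(M+1).
\]
Hence there is $\lambda_0>0$ such that for $\lambda>\lambda_0$ the prefactor is $\le\tfrac12$; this absorbs $M$ and gives $M\le c\,\|b\|_{\infty,H^{-\beta}_q}\lambda^{-(1-a)}$, which tends to $0$ as $\lambda\to\infty$. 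Finally, the embedding $H^{1+\delta}_p\subset\mathcal{C}^{1,\gamma}$ from \eqref{eq:morrey} yields $\sup_{(t,x)\in[0,T]\times\R}|\nabla u(t,x)|\le c'M$, so enlarging $\lambda_0$ if necessary so that $c'M\le\tfrac12$ for all $\lambda>\lambda_0$ proves the claimed bound for $u$.

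For $u^N$ the argument is verbatim the same, the only data-dependent quantity being $\|a^N\|_{\infty,H^{-\beta}_q}$, which is bounded uniformly in $N$: by \eqref{eq:IZ1} one has $\|P_{\eta_N}b^N\|_{H^{-\beta}_q}\le\|b^N\|_{H^{-\beta}_q}$, and since $b^N\to b$ in $\mathcal{C}^{1/2}([0,T];H^{-\beta_0}_{q_0})\subset\mathcal{C}([0,T];H^{-\beta}_q)$ (Remark \ref{rm:b^N}(ii)) the sequence $(\|a^N\|_{\infty,H^{-\beta}_q})_{N\ge1}$ is bounded. Thus the same $\lambda_0$, chosen using $\sup_N\|a^N\|_{\infty,H^{-\beta}_q}$ in place of $\|b\|_{\infty,H^{-\beta}_q}$, works simultaneously for every $N$.

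The main obstacle is ingredient (a)–(b): one must give meaning to, and control in a negative Sobolev space, the product of the distributional drift $b$ with the merely H\"older-continuous function $\nabla u$, while keeping the Sobolev parameters compatible with a smoothing rate $\tau^{-a}$ having $a<1$ so that the Duhamel integral converges — this is precisely the functional-analytic bookkeeping performed within $\mathcal{K}(\beta,q)$ in \cite{Fla-Iss-Rus-2017}, and the restriction $\beta<\tfrac12$ in Assumption \ref{ass:mainFIR} is exactly what guarantees $a<1$. The remaining steps — absorbing $M$ and transferring uniformity to the sequence $(u^N)_{N\ge1}$ — are routine.
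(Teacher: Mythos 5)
Your argument is correct in outline and is essentially the proof of \cite[Lemma 20]{Fla-Iss-Rus-2017}, which the paper itself does not reprove but simply imports with a notational adaptation: the mild/Duhamel representation with the $e^{-\lambda(s-t)}$ damping, the product and heat-smoothing estimates whose exponents are exactly the bookkeeping encoded in $\mathcal{K}(\beta,q)$, the resulting smallness of $\|u_\lambda\|_{\infty,H^{1+\delta}_p}$ as $\lambda\to\infty$, and the fractional Morrey embedding are precisely the ingredients of the cited proof. Your way of getting a $\lambda_0$ that works for all $N$ (boundedness of $\|a^N\|_{\infty,H^{-\beta}_q}$, since $a^N\to b$) is the same point the paper records separately in Remark \ref{rem:lN}.
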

Furthermore,  \cite[Lemma 21]{Fla-Iss-Rus-2017} also guarantees that 
\begin{align}\label{eq:gradcont}
u,\,u^N,\,\nabla u,\,\nabla u^N \in \C([0,T]\times\R).
\end{align}
The next result is a refined statement of  \cite[Lemma 23]{Fla-Iss-Rus-2017}. In particular our equation \eqref{eq:bconv} is contained in the final part of the original proof in \cite{Fla-Iss-Rus-2017}.
\begin{lemma}\label{lem:conv}
	Let $(\delta,p)\in \mathcal{K}(\beta,q)$ and let $u\!=\!u_\lambda$ and $u^N\!=\!u^N_\lambda$ be the mild solutions in $\mathcal{C}([0,T];H_p^{1+\delta})$ to \eqref{eq: pde} and \eqref{eq:approxpde}, respectively. Then, if $a^N\to b$ in $\mathcal{C}([0,T];H^{-\beta}_{q})$,  there is a constant $c_0>0$ such that 
	\begin{align}\label{eq:bconv}
	\|u-u^N\|^{(\rho)}_{\infty,H^{1+\delta}_p}\le c_0\frac{\left(\|u\|^{(\rho)}_{\infty,H^{1+\delta}_p}+1\right)\rho^{\frac{\delta+\beta-1}{2}}}{1-c_0\left(\|b\|_{H^{-\beta}_{ q}}\, \rho^{\frac{\delta+\beta-1}{2}}+\lambda\rho^{-1}\right)}\|b-{a^N}\|_{{\infty},H^{-\beta}_{ q}},
	\end{align}
	for any $\rho>\lambda$ that is sufficiently large to guarantee that the denominator above is positive.
\end{lemma}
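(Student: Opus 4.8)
The strategy is the one underlying \cite[Lemma 23]{Fla-Iss-Rus-2017}: work with the mild (Duhamel) formulations of $u$ and $u^N$, subtract them, split the nonlinear term, and close the resulting inequality in the weighted space $\big(\mathcal{C}([0,T];H^{1+\delta}_p),\norme{\cdot}^{(\rho)}_{\infty,H^{1+\delta}_p}\big)$ by taking $\rho$ large. In the notation of Section \ref{sec:setting}, and writing the zero-order term $\lambda u$ as a source (this is precisely the feature that will produce the $\lambda\rho^{-1}$ term in the denominator of \eqref{eq:bconv}), the mild solutions satisfy
\begin{align*}
u(t)&=\int_t^T P_{s-t}\big(b(s)(\nabla u(s)+1)-\lambda u(s)\big)\,\di s,\\
u^N(t)&=\int_t^T P_{s-t}\big(a^N(s)(\nabla u^N(s)+1)-\lambda u^N(s)\big)\,\di s.
\end{align*}

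First I would subtract the two identities and rewrite the integrand by means of the algebraic decomposition
\[
b(\nabla u+1)-a^N(\nabla u^N+1)=b\,\nabla(u-u^N)+(b-a^N)(\nabla u^N+1),
\]
so that $u-u^N=\mathrm{(I)}+\mathrm{(II)}+\mathrm{(III)}$, where $\mathrm{(I)}$ is the $P$-convolution of $b\,\nabla(u-u^N)$, $\mathrm{(II)}$ is the $P$-convolution of $(b-a^N)(\nabla u^N+1)$, and $\mathrm{(III)}=-\lambda\int_t^T P_{s-t}(u-u^N)(s)\,\di s$.

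Next I would estimate each piece in $H^{1+\delta}_p$ using three ingredients already available above: (i) the smoothing bound $\norme{P_r g}_{H^{1+\delta}_p}\le c\,r^{-(1+\delta+\beta)/2}\norme{g}_{H^{-\beta}_{\bar p}}$, obtained exactly as in \eqref{eq:IZ1}--\eqref{eq:IZ2} for a suitable Lebesgue exponent $\bar p$, together with the plain contraction $\norme{P_r g}_{H^{1+\delta}_p}\le\norme{g}_{H^{1+\delta}_p}$ which handles $\mathrm{(III)}$; (ii) a pointwise-product estimate of the type $\norme{f\,\nabla v}_{H^{-\beta}_{\bar p}}\le c\,\norme{f}_{H^{-\beta}_q}\norme{\nabla v}_{H^{\delta}_p}\le c\,\norme{f}_{H^{-\beta}_q}\norme{v}_{H^{1+\delta}_p}$, available on $\mathcal{K}(\beta,q)$ because $\delta>\beta$, applied with $f=b$, $v=u-u^N$ in $\mathrm{(I)}$ and with $f=b-a^N$ in $\mathrm{(II)}$; (iii) the uniform bound $\norme{\nabla u^N}_{L^\infty}\le\tfrac12$ from Lemma \ref{lem: grad}, so that $\nabla u^N+1$ only contributes a harmless multiplicative constant and, after estimating $\norme{u^N}^{(\rho)}_{\infty,H^{1+\delta}_p}$ by $\norme{u}^{(\rho)}_{\infty,H^{1+\delta}_p}+\norme{u-u^N}^{(\rho)}_{\infty,H^{1+\delta}_p}$, the term $\mathrm{(II)}$ is controlled by a multiple of $\rho^{(\delta+\beta-1)/2}\norme{b-a^N}_{\infty,H^{-\beta}_q}\big(\norme{u}^{(\rho)}_{\infty,H^{1+\delta}_p}+1\big)$ plus a piece proportional to $\norme{u-u^N}^{(\rho)}_{\infty,H^{1+\delta}_p}$. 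Inserting the weight $e^{-\rho t}=e^{-\rho(t-s)}e^{-\rho s}$ and using
\begin{align*}
\int_0^\infty e^{-\rho r}r^{-(1+\delta+\beta)/2}\,\di r&=\Gamma\!\Big(\tfrac{1-\delta-\beta}{2}\Big)\,\rho^{(\delta+\beta-1)/2}\qquad(\text{finite since }\delta+\beta<1\text{ on }\mathcal K(\beta,q)),\\
\int_0^\infty e^{-\rho r}\,\di r&=\rho^{-1},
\end{align*}
one arrives at an inequality of the form
\begin{align*}
\norme{u-u^N}^{(\rho)}_{\infty,H^{1+\delta}_p}
&\le c_0\Big(\norme{b}_{H^{-\beta}_q}\,\rho^{(\delta+\beta-1)/2}+\lambda\rho^{-1}\Big)\norme{u-u^N}^{(\rho)}_{\infty,H^{1+\delta}_p}\\
&\quad+c_0\,\rho^{(\delta+\beta-1)/2}\big(\norme{u}^{(\rho)}_{\infty,H^{1+\delta}_p}+1\big)\norme{b-a^N}_{\infty,H^{-\beta}_q}.
\end{align*}
Finally one fixes $\rho>\lambda$ large enough that $1-c_0\big(\norme{b}_{H^{-\beta}_q}\rho^{(\delta+\beta-1)/2}+\lambda\rho^{-1}\big)>0$ — possible since the exponent $(\delta+\beta-1)/2$ is negative, so both terms vanish as $\rho\to\infty$ — absorbs the first summand on the right into the left-hand side, and divides through to obtain \eqref{eq:bconv}.

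The main obstacle is not the scheme above but the functional-analytic bookkeeping of Bessel-potential indices hidden in ingredients (i)--(ii): one must check that both $b\,\nabla(u-u^N)$ and $(b-a^N)(\nabla u^N+1)$ land in a space $H^{-\beta}_{\bar p}$ from which $P_r$ returns to $H^{1+\delta}_p$ with a time singularity of order strictly less than $1$ (so that the $s$-integrals converge), and that the outcome can ultimately be written in terms of the single norm $\norme{b-a^N}_{\infty,H^{-\beta}_q}$ appearing on the right-hand side of \eqref{eq:bconv}. This is exactly where the constraints $\beta<\delta<1-\beta$ and $\tfrac1\delta<p<q$ defining $\mathcal{K}(\beta,q)$ are used, together with the embedding \eqref{eq:emb} and interpolation of $L^p$-spaces; since this computation is carried out in \cite[Lemma 23]{Fla-Iss-Rus-2017}, I would import it verbatim and limit the new work to tracking the dependence of the constants on $\rho$ and $\lambda$ as displayed above.
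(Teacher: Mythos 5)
Your proposal follows essentially the route the paper itself relies on: the paper gives no self-contained proof of Lemma \ref{lem:conv} but imports \eqref{eq:bconv} from the final part of the proof of Lemma 23 in \cite{Fla-Iss-Rus-2017}, and that argument is precisely your scheme --- Duhamel formulations of $u$ and $u^N$, the splitting $b\,\nabla(u-u^N)+(b-a^N)(\nabla u^N+1)$, the product/semigroup estimates made possible by $\beta<\delta<1-\beta$ and $\tfrac1\delta<p<q$, and absorption in the weighted norm for $\rho$ large, which is exactly what generates the factors $\rho^{(\delta+\beta-1)/2}$ and $\lambda\rho^{-1}$ in \eqref{eq:bconv}. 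Two small points to tidy when writing it out: in the backward-in-time integral $\int_t^T P_{s-t}(\cdot)\,\di s$ the exponential factor must decay in $s-t$, so one should work with the equivalent weight $e^{-\rho(T-t)}$ (or time-reverse the equation) rather than literally $e^{-\rho t}$ from \eqref{eq:norm-rho}, after which your Gamma-function computation is correct; and the extra absorption term proportional to $\|b-a^N\|_{\infty,H^{-\beta}_q}\,\|u-u^N\|^{(\rho)}_{\infty,H^{1+\delta}_p}$, created when you replace $\|u^N\|^{(\rho)}_{\infty,H^{1+\delta}_p}$ by $\|u\|^{(\rho)}_{\infty,H^{1+\delta}_p}+\|u-u^N\|^{(\rho)}_{\infty,H^{1+\delta}_p}$, is harmless for $N$ large (as in Remark \ref{rem:lN}) since $a^N\to b$ in $\mathcal{C}([0,T];H^{-\beta}_q)$.
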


For future reference we define
\begin{align}\label{eq:crho}
c(\rho):=\frac{c_0\left(\|u\|^{(\rho)}_{\infty,H^{1+\delta}_p}+1\right)\rho^{\frac{\delta+\beta-1}{2}}}{1-c_0\left(\|b\|_{{\infty},H^{-\beta}_{ q}} \,\rho^{\frac{\delta+\beta-1}{2}}+\lambda\rho^{-1}\right)},
\end{align}
for $\rho>0$ large enough so that the denominator is positive. 

\begin{remark}\label{rem:lN}
	Notice that in Lemma \ref{lem: grad} we can choose $N\ge N_0$, sufficiently large, so that $\lambda_0$ depends only on $\delta,\beta$ and $\|b\|_{\infty,H_{q}^{-\beta}}$, because $a^N\to b$ in $\mathcal{C}([0,T];H_{ q}^{-\beta})$.
	Then, in Lemma \ref{lem:conv} we can choose $\rho>\rho_0$ so that the denominator in \eqref{eq:bconv} is positive and $\rho_0>\lambda_0$ (as needed for the fixed point in \cite[Theorem 14]{Fla-Iss-Rus-2017}).
\end{remark}

From now on we will simplify our notation and set $u=u_\lambda$, for some $\lambda$ sufficiently large so that Lemma \ref{lem: grad} holds. In order to solve equation \eqref{eq: virtual solution} and find a virtual solution of \eqref{eq: original sde}, one has to transform the SDE \eqref{eq: virtual solution} into a more standard one. This is achieved by setting $Y_t := \varphi(t, X_t)$, where
\begin{align}\label{eq:phi}
\varphi(t,x) := x + u(t,x).
\end{align} 
Notice that $\varphi \in\mathcal C([0,T]; \mathcal C^1)$ thanks to \eqref{eq:gradcont}. Moreover by Lemma \ref{lem: grad} $x\mapsto \varphi(t,x) $ is invertible for each fixed $t\in [0,T]$, with its inverse denoted by
\begin{align}\label{eq:psi}
\psi(t,\cdot) := \varphi^{-1}(t, \cdot).
\end{align} 
By Lemma \ref{lem: grad}, $\psi(t, \cdot)$ is 2-Lipschitz, uniformly in $t$.
Then, solving \eqref{eq: virtual solution} is equivalent to solving the standard SDE for $Y$ below
\begin{align}\label{eq: Y}
Y_t =\ y_0 \!+\! (\lambda+1)\!\int_0^t\!\! u(s,\psi\left(s,Y_s\right)) \di s\!+\!\int_0^t\!\! (\nabla u(s,\psi\left(s,Y_s\right))\!+\!1) \di W_s,%\nonumber  
\end{align}
where $y_0 = \varphi(0, x)$. Existence of a weak  solution for \eqref{eq: Y} is guaranteed by \cite[Theorem 10.2.2]{stroock2007}  since its coefficients $\tilde b(t,y):=(\lambda+1)u(t,\psi(t,y))$ and $\tilde \sigma(t,y):=\nabla u(t,\psi(t,y))+1$ are bounded continuous with $\tilde \sigma$ uniformly non-degenerate (see \cite[Proposition 27]{Fla-Iss-Rus-2017} for details). 

Likewise, letting $\varphi^N(t,x) := x + u^N(t,x)$, $y^N_0:=\varphi^N(0,x)$ and $\psi^N(t,\cdot):=\left(\varphi^N\right)^{-1}(t,\cdot)$, the analogue of \eqref{eq: Y} for the approximated SDE \eqref{eq: approximated sde} is given by an SDE for $Y^N: = \psi^N(t,X^N_t)$. That is 
\begin{align}\label{eq: approximated Y} 
Y_t^N =& y_0^N\! +\! (\lambda\!+\!1)\!\int_0^t\!\! u^N\left(s,\psi^N\left(s,Y_s^N\right)\right) \di s \\
&+\int_0^t \left(\nabla u^N\left(s,\psi^N\left(s,Y_s^N\right)\right)+1\right)\di W_s. \nonumber
\end{align}
Moreover, $\psi^N(t, \cdot)$ is 2-Lipschitz, uniformly in $t$, by Lemma \ref{lem: grad}.

\begin{remark}\label{rem:strong}
In \cite{Fla-Iss-Rus-2017} the authors work in $d$ dimensions and find weak solutions for the SDE for $Y$. However, for $d=1$ both equations \eqref{eq: Y} and \eqref{eq: approximated Y} admit a unique \emph{strong} solution if $\gamma:=\delta-1/p>1/2$. That holds because the diffusion coefficient is $\gamma$-H\"older continuous (see \eqref{holder}) and the drift is Lipschitz continuous. This result is used in the proof of Proposition \ref{pr:main2} to justify the use of the same Brownian motion when estimating $Y^N-Y$. 

It then follows that $X_t = \psi(t,Y_t)$ and $X_t^N = \psi^N(t,Y_t^N)$ are adapted to the Brownian filtration and, in that sense, they are `strong' virtual solutions to  \eqref{eq: original sde} and \eqref{eq: approximated sde}, respectively. Moreover, they are unique up to indistinguishability because of the one-to-one mapping between $Y$, $Y^N$ and $X$, $X^N$: for example, if two different solutions $X$ and $\tilde X$ of \eqref{eq: original sde} exist, they give rise to two different solutions $Y$ and $\tilde Y$ of \eqref{eq: Y}, which is impossible by uniqueness of the solution to \eqref{eq: Y}. 
\end{remark}

We conclude this section with some further remarks on the set $\mathcal K(\beta, q)$ and on the different choices $(\delta,p)$, and we explain the implications for the solution $u$. To facilitate the discussion, let us consider $(\beta_0,q_0)$ as in Assumption \ref{ass: main} and let us define
\begin{equation}\label{eq:setH}
\mathcal{H}(\beta_0,q_0):=\{(\delta,p)\in \mathcal K(\beta_0, q_0) \, | \, \delta-1/p>1/2\}.
\end{equation}
The sets $\mathcal{H}(\beta_0,q_0)$ and $\mathcal K(\beta_0, q_0)$ are illustrated in Figure \ref{fig:Kagain}.

\begin{figure}[ht]
	\centering
	\begin{tikzpicture}
	% axes	
	\draw[->] (-0.5,0)--(9,0) node[below right] {$\frac1p$};
	\draw [->] (0,-0.5)--(0,5) node[left] {$\delta$};

	% horizontal lines
	\draw (-.05,4)  node[anchor=east]{$1$}--(.05,4); %1
	\draw[dashed] (-.25,3.75)  node[anchor=east]{$1-\beta_0$}--(8,3.75); %1-\beta horiz
	\draw (-.05,2)  node[anchor=east]{$\frac12$}--(.05,2); %1/2
	\draw (-.05,1)  node[anchor=east]{$\frac14$}--(.05,1); %1/4
	\draw[dashed] (-.25,.25)  node[anchor=east]{$\beta_0$}--(8,.25); %\beta horiz
	
	% vertical lines
	\draw[dashed] (.5,-.25)  node[anchor=north]{$\beta_0$}--(.5, 4); %\beta0 vert
	\draw[dashed] (1,-.25) node[anchor=north]{$ \frac1{q_0}$} --(1,4); % 1/q0
	\draw (2,-.05) node[anchor=north]{$ \frac1{4}$} --(2,0.05); %1/4
	\draw (4,-.05)  node[anchor=north]{$\frac12$}--(4, .05); %1/2
	\draw[dashed] (7.5,-.25) node[anchor=north]{$1-\beta_0=:\frac{1}{\tilde q_0}$} --(7.5,4);%1/tilde q0
	\draw (8,-.05)  node[anchor=north]{$1$}--(8, .05); %1
	
	% large triangle	
	\draw (0, 0) --( 7.5,3.75)	;
	\fill[gray] (1, 0.5)--(1,3.75)--(7.5,3.75)--(1,0.5);

	%black triangle
	\draw (0, 2) --( 4,4) node[anchor=south]{$\delta-\frac1p=\frac12$}	;
	\fill[black]  (1, 2.5) --(1, 3.75) --(3.5, 3.75) --(1, 2.5)  ;
	
	% vertical blue lines 
	\draw[blue, dotted] (2.5,-.65) node[anchor=north]{$ \beta_1$} --(2.5,4); % beta1
	\draw[blue,dotted] (3.25,-.65) node[anchor=north]{$ \frac1{q_1}$} --(3.25,4); % 1/q1
	\draw[blue, dotted] (5.5,-.65) node[anchor=north]{$ 1-\beta_1:=\frac1{\tilde q_1}$} --(5.5,4); % 1/q1

	% horizontal blue lines 
	\draw[blue,dotted] ( -0.65,2.75) node[anchor=east]{$ 1-\beta_1$} --(8, 2.75); %1- beta1
	\draw[blue,dotted] ( -0.65,1.25) node[anchor=east]{$ \beta_1$} --(8, 1.25); %beta1
	
	% light gray triangle
	\fill[mygray]  (3.25, 2.75) --(3.25, 1.625) --(5.5, 2.75)-- (3.25, 2.75)  ;
	
	%names of K and H
	\node at (4.5,1.5) {$\mathcal K(\beta_1, q_1)$};
	\draw (4.5,1.75) arc (20:40:2.5cm);
	\node at (1.5,4.5) {$\mathcal H(\beta_0, q_0)$};
	\draw (1.5,3.5) arc (20:40:2.5cm);
	\node at (6.7,2.5) {$\mathcal K(\beta_0, q_0)$};
	\draw (7,2.75) arc (20:40:2.5cm);
	\end{tikzpicture}

	\caption{The sets $\mathcal K(\beta_0, q_0)$ (large grey triangle),  $\mathcal K(\beta_1, q_1)$ (small light grey triangle) and  $\mathcal H(\beta_0, q_0)$   (small black triangle) are drawn here.}
	\label{fig:Kagain}
\end{figure}
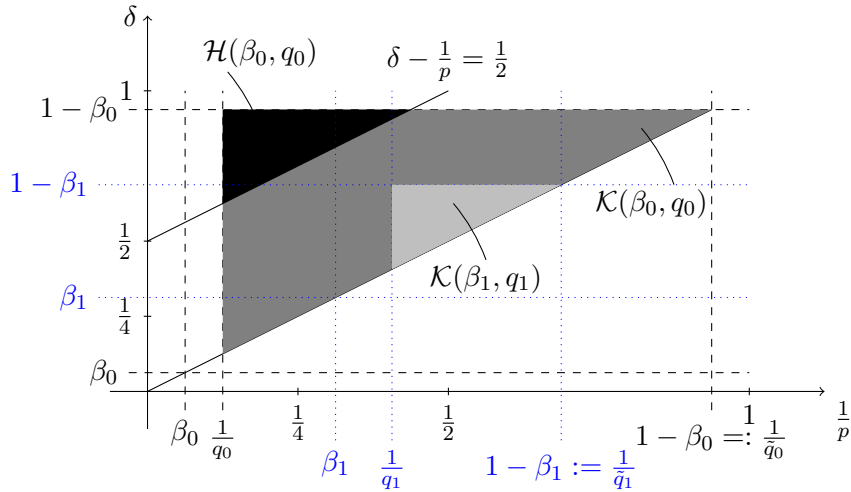

\begin{remark}\label{rem:uniqueness}
Let $b\in \mathcal  C^{\kappa} ([0,T], H^{-\beta_0}_{\tilde q_0, q_0})$, where $(\beta_0,q_0)$ is fixed and satisfies  Assumption \ref{ass: main}. Recall that Assumption \ref{ass:mainFIR} is automatically  satisfied. 
\begin{itemize}
\item [(i)] The set $\mathcal H(\beta_0, q_0)$ defined in \eqref{eq:setH} is not empty (see the black triangle in Figure \ref{fig:Kagain}).
\item [(ii)] Although our drift $b$ satisfies Assumption \ref{ass: main}, in Propositions \ref{pr:main2} and  \ref{prop:Haar} we chose to look at it as an element of a larger space. In particular, we used the embedding (see Remark \ref{rm:b^N}) 
\[
b\in\mathcal{C}([0,T]; H^{-\beta_0}_{q_0,\tilde q_0})\subset \mathcal{C}([0,T]; H^{-\beta_1}_{q_1,\tilde q_1}),
\] 
for $0<\beta_0<\beta_1<\frac12$ and $ q_0>q_1>\tilde q_1>\tilde q_0$.  
\item [(iii)] Since the solution of \eqref{eq: pde} is unique in $\mathcal K(\beta_0,q_0)$ (see large triangle in Figure \ref{fig:Kagain}),  then we can always pick $(\delta,p)\in\mathcal H(\beta_0,q_0)$ such that $u\in \mathcal C([0,T];H^{1+\delta}_p)$ and  $\nabla u(t,\,\cdot\,)\in\mathcal C^\gamma $ for  $\gamma= \delta-\tfrac1p>1/2$ and all $t\in[0,T]$.
The largest H\"older exponent we can find in $\mathcal H(\beta_0,q_0)$ is $\gamma_0-\eps$, where $\gamma_0=1-\beta_0-1/q_0$ and $\eps>0$ is arbitrarily small. This means that for the solution of \eqref{eq: pde} we have $\nabla u(t,\,\cdot\,)\in\mathcal C^{\gamma} $ for all $t\in[0,T]$ and any $\gamma<\gamma_0$.
\end{itemize}
\end{remark}

%%%%%%%%%%%%%%%%%%%%%%%%%%%%%%%%%%%%%%%%%%%%%%%%%%%%%%%%%%%%%%%%%%%

%%%%%%%%%%%%%%%%%%%%%%%%%%%%%%%%%%%%%%%%%%%%%%%%%%%%%%%%%%%%%%%%%%%

\section{Convergence rate of $X^N-X$}\label{sec:thm1}

In this section we prove Proposition \ref{pr:main2}. 
It turns out that in order to show the convergence rate of $X^N$ to $X$ stated in  Proposition \ref{pr:main2} we must provide an upper bound for the local time at zero of $Y-Y^N$. Recall that for any real-valued continuous semi-martingale $\bar Y$, the local time  $L_t^0(\bar Y)$ is  defined as 
\begin{align}\label{eq:LT}
L^0_t(\bar Y)=\lim_{\eps\to 0}\frac{1}{2\eps}\int_0^t\mathds{1}_{\{|\bar Y_s|\le \eps\}}\di \langle \bar Y\rangle_s,\quad \mathbb{P}\text{-a.s.}
\end{align}
for all $t\geq 0$.
Now we derive a bound on \eqref{eq:LT} that will be needed later on. 
\begin{lemma}\label{lm:g}
For any  $\varepsilon\in(0,1)$ and any real-valued, continuous semi-martingale $\bar Y$ we have 
\begin{align}\label{eq:l0}
\E\left[L^0_t(\bar Y)\right]\le& 4\eps-2\E\left[\int_0^t\left(\mathds{1}_{\{\bar Y_s\in(0,\varepsilon)\}}+\mathds{1}_{\{\bar Y_s\ge\varepsilon\}}e^{1-\bar Y_s/\eps}\right)\di \bar Y_s\right]\\
& +\frac{1}{\eps}\E\left[\int_0^t\mathds{1}_{\{\bar Y_s>\eps\}}e^{  1-\bar Y_s/\eps }\di\langle \bar Y\rangle_s\right].\notag
\end{align}
\end{lemma}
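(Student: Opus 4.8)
The idea is to approximate the (non-smooth) function $x\mapsto |x|^+ := x\vee 0$, whose second derivative in the sense of distributions is the Dirac mass $\delta_0$, by a smooth convex function $g_\eps$ that is built to interpolate: linear with slope $1$ near $x=0^+$, and exponentially flattening for $x\ge\eps$, so that $g_\eps''$ is a nonnegative bump of total mass close to $1$ supported near the origin. A convenient explicit choice is
\begin{align*}
g_\eps(x):=\begin{cases}
0,& x\le 0,\\
\tfrac{1}{2\eps}x^2,& 0<x<\eps,\\
\eps\big(\tfrac32-e^{1-x/\eps}\big),& x\ge\eps,
\end{cases}
\end{align*}
which is $\C^1$ (the first derivative is $0,\ x/\eps,\ e^{1-x/\eps}$ on the three regions, matching at $x=0$ and $x=\eps$), piecewise $\C^2$, convex, satisfies $0\le g_\eps(x)\le x^+$ and $0\le g_\eps(x)\le 2\eps$ uniformly, and has $g_\eps''(x)=\tfrac1\eps\mathds 1_{\{0<x<\eps\}}+\tfrac1\eps e^{1-x/\eps}\mathds 1_{\{x>\eps\}}$ off the two junction points. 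First I would record these elementary properties of $g_\eps$ (convexity, the two-sided bounds, and the explicit first and second derivatives), noting $g_\eps'(x)=\mathds 1_{\{0<x<\eps\}}\tfrac{x}{\eps}+\mathds 1_{\{x\ge\eps\}}e^{1-x/\eps}$, which in particular is bounded by $\mathds 1_{\{\bar Y_s\in(0,\eps)\}}+\mathds 1_{\{\bar Y_s\ge\eps\}}e^{1-\bar Y_s/\eps}$ in absolute value — this is precisely the integrand appearing in \eqref{eq:l0}.

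Next I would apply the It\^o--Tanaka / generalised It\^o formula (Meyer--It\^o, valid for a difference of convex functions or for a $\C^1$ function with absolutely continuous derivative applied to a continuous semimartingale) to $g_\eps(\bar Y_t)$:
\begin{align*}
g_\eps(\bar Y_t)=g_\eps(\bar Y_0)+\int_0^t g_\eps'(\bar Y_s)\,\di\bar Y_s+\tfrac12\int_0^t g_\eps''(\bar Y_s)\,\di\langle\bar Y\rangle_s,
\end{align*}
where the last integral uses $g_\eps''$ defined Lebesgue-a.e. (the two junction points contribute nothing to $\di\langle\bar Y\rangle_s$ since occupation measure has no atoms, or one passes through the occupation-times formula). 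Then I would compare $g_\eps$ with $x^+$ directly, rather than invoking a limit: since $0\le g_\eps(x)\le x^+\le |x|$ and the Tanaka formula for $|\cdot|$ gives $L^0_t(\bar Y)$, the cleanest route is to bound $L^0_t$ via the occupation-times formula. Indeed by \eqref{eq:LT},
\begin{align*}
L^0_t(\bar Y)=\lim_{\eps\to0}\tfrac1{2\eps}\int_0^t\mathds 1_{\{|\bar Y_s|\le\eps\}}\di\langle\bar Y\rangle_s\le\liminf_{\eps\to0}\ \E\Big[\tfrac1{\eps}\int_0^t\mathds 1_{\{0<\bar Y_s<\eps\}}\di\langle\bar Y\rangle_s\Big]+\text{(sym.)},
\end{align*}
but to get exactly \eqref{eq:l0} I would instead observe that $\tfrac1\eps\mathds 1_{\{0<\bar Y_s<\eps\}}\le g_\eps''(\bar Y_s)$ a.e., so that $\tfrac12\E\int_0^t g_\eps''(\bar Y_s)\di\langle\bar Y\rangle_s\ge \tfrac1{2\eps}\E\int_0^t\mathds 1_{\{0<\bar Y_s<\eps\}}\di\langle\bar Y\rangle_s$, and separate off the $\{\bar Y_s>\eps\}$ part of $g_\eps''$ explicitly, which produces the $\tfrac1\eps\E\int_0^t\mathds 1_{\{\bar Y_s>\eps\}}e^{1-\bar Y_s/\eps}\di\langle\bar Y\rangle_s$ term on the right of \eqref{eq:l0}. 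The symmetric near-zero contribution from the left, $\tfrac1{2\eps}\E\int_0^t\mathds 1_{\{-\eps<\bar Y_s<0\}}\di\langle\bar Y\rangle_s$, is what forces the constant $4\eps$ rather than $2\eps$: running the same argument with $g_\eps$ replaced by the reflected function $\tilde g_\eps(x):=g_\eps(-x)$ handles the negative side, and adding the two It\^o expansions and using $L^0_t(\bar Y)=\lim_\eps\tfrac1{2\eps}\int_0^t(\mathds 1_{\{0\le\bar Y_s<\eps\}}+\mathds 1_{\{-\eps<\bar Y_s<0\}})\di\langle\bar Y\rangle_s$ yields the claimed bound after rearranging: the $g_\eps(\bar Y_t)-g_\eps(\bar Y_0)$ terms are each in $[0,2\eps]$ in absolute value and get absorbed into $4\eps$, the $\int g_\eps'\,\di\bar Y$ term gives the stochastic integral on the right of \eqref{eq:l0} (after bounding $|g_\eps'|$ as above and, for the reflected copy, the analogous integrand vanishes against the positive excursions so only the stated term survives), and the $\{\bar Y>\eps\}$ part of $\int g_\eps''\,\di\langle\bar Y\rangle$ gives the final term.

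\textbf{Main obstacle.} The delicate point is the bookkeeping that turns the two It\^o--Tanaka expansions (for $g_\eps$ and its reflection) into exactly the asymmetric-looking right-hand side of \eqref{eq:l0}, in particular getting the constant $4\eps$ and ensuring only the $\{\bar Y_s>\eps\}$ exponential term survives on the positive side while the negative side contributes nothing beyond $4\eps$ — this requires being careful about which indicator multiplies which $\di\bar Y_s$ and $\di\langle\bar Y\rangle_s$ term, and about taking expectations (one should note that $\int_0^\cdot g_\eps'(\bar Y_s)\,\di M_s$ is a true martingale when $\bar Y=M+A$ is a continuous semimartingale, since $g_\eps'$ is bounded, and keep the finite-variation part under the $\di\bar Y_s$ integral so no cancellation is claimed that isn't there). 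A secondary point worth a line is justifying the generalised It\^o formula for $g_\eps$: since $g_\eps\in\C^1$ with $g_\eps'$ Lipschitz (hence absolutely continuous) and convex, the Meyer--It\^o formula applies directly, so no mollification argument is strictly needed.
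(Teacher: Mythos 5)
There is a genuine gap, and it comes from the choice of test function. The paper's proof takes $g_\eps$ equal to $0$ for $y<0$, to $y$ on $[0,\eps)$ and to $\eps[2-e^{1-y/\eps}]$ for $y\ge\eps$: this function has a \emph{kink at the origin} with derivative jumping from $0$ to $1$, so the It\^o--Tanaka (Meyer--It\^o) formula produces the term $\tfrac12 L^0_t(\bar Y)$ \emph{exactly}, for the fixed $\eps$ of the statement; rearranging, using $0\le g_\eps\le 2\eps$ for the $4\eps$, and reading off $g_\eps'=\mathds{1}_{[0,\eps)}+e^{1-y/\eps}\mathds{1}_{[\eps,\infty)}$ and $g_\eps''=-\eps^{-1}e^{1-y/\eps}\mathds{1}_{(\eps,\infty)}$ gives \eqref{eq:l0} verbatim. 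Your $g_\eps$ is $\mathcal C^1$ at the origin (slope $0$ from both sides), so no local-time term ever appears in your It\^o expansion, and you try to recover $L^0_t$ through the occupation-density limit in \eqref{eq:LT}. That step cannot work for fixed $\eps$: the quantity $\tfrac1{2\eps}\int_0^t\mathds{1}_{\{0<\bar Y_s<\eps\}}\di\langle \bar Y\rangle_s=\tfrac1{2\eps}\int_0^\eps L^a_t\,\di a$ neither dominates nor is dominated by $L^0_t$ at a fixed scale $\eps$ (there is no monotonicity in $\eps$), and the lemma is precisely a fixed-$\eps$ inequality — indeed it is later applied with $\eps=\|a^N-b\|_{\infty,H^{-\beta}_q}$, a fixed positive number. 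Mixing the limit $\eps\to0$ from the definition of $L^0_t$ with the fixed $\eps$ appearing on the right-hand side is the essential missing idea; the kink at zero is what makes the limit unnecessary.

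There are further problems even granting your setup. Your formula for $g_\eps''$ has the wrong sign on $(\eps,\infty)$: since $g_\eps'(x)=e^{1-x/\eps}$ is decreasing there, $g_\eps''(x)=-\eps^{-1}e^{1-x/\eps}<0$, so your function is not convex and the comparison $\tfrac1\eps\mathds{1}_{(0,\eps)}\le g_\eps''$ fails; the $\{\bar Y_s>\eps\}$ part of the quadratic-variation integral enters with the opposite sign to your bookkeeping (in the correct proof it is exactly this negative sign which, after moving the term to the other side, produces $+\tfrac1\eps\E\int\mathds{1}_{\{\bar Y_s>\eps\}}e^{1-\bar Y_s/\eps}\di\langle\bar Y\rangle_s$). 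Moreover, the drift term in \eqref{eq:l0} is a \emph{signed} integral with the exact integrand $\mathds{1}_{(0,\eps)}+\mathds{1}_{[\eps,\infty)}e^{1-\bar Y_s/\eps}$, i.e.\ $g_\eps'$ itself; it cannot be obtained by ``bounding $|g_\eps'|$'', and with your quadratic cap the integrand on $(0,\eps)$ would be $\bar Y_s/\eps$ rather than $1$, which is a different inequality from the one claimed. Finally, the reflection trick is not needed and does not help: adding the expansion for $\tilde g_\eps(x)=g_\eps(-x)$ introduces a signed drift integral supported on $\{\bar Y_s<0\}$ and an extra quadratic-variation term on $\{\bar Y_s<-\eps\}$, neither of which appears in \eqref{eq:l0} and neither of which can simply be dropped; in the paper the constant $4\eps$ comes from $2\big(g_\eps(\bar Y_t)-g_\eps(\bar Y_0)\big)\le 4\eps$ alone, with no symmetrisation.
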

\begin{figure}
\centering
\begin{tikzpicture}[scale=0.8, samples=100]
\draw[->] (-1,0) -- (10,0) node[right] {$y$};
\draw[->] (0,-1) -- (0,5) node[left] {};
\node at (0,2)[left] {\footnotesize\color{black} $\varepsilon$};
\node at (0,4)[left] {\footnotesize\color{black} $2\varepsilon$};
\node at (2,0)[below] {\footnotesize\color{black} $\varepsilon$};
\draw[smooth, domain=-1:0, color=blue, very thick] (-1,0) -- (0,0) ;
\draw[smooth, domain = 0:2, color=blue, very thick] (0,0) -- (2,2);
\node at (3,2.5)[below] {\footnotesize\color{blue}$g_\varepsilon(y)$};
\draw[smooth, dashed, domain = 0:10, color=gray]  (0,4) -- (10,4);
\draw[smooth, dashed, domain = 0:2, color=gray] (0,2) -- (2,2);
\draw[smooth, dashed, domain = 0:2, color=gray] (2,0) -- (2,2);
\fill [black] ($(0,2)$) circle (1pt) ;
\fill [black] ($(0,4)$) circle (1pt) ;
\fill [black] ($(2,0)$) circle (1pt) ;
\draw[smooth, domain=2:10, color=blue, very thick] plot (\x,{2*(2-e/(e^(\x/2)) )});
\end{tikzpicture}
\caption{The function $y\mapsto g_\varepsilon (y)$.}
\label{fig:g}
\end{figure}
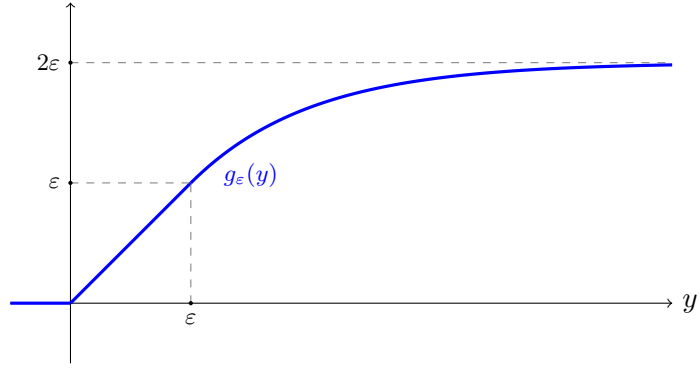
\begin{proof}
For $\eps\in(0,1)$ and $y\in\R$ we define (see Figure \ref{fig:g})
\[
g_\eps(y):=0\cdot\mathds{1}_{\{y<0\}} +y \mathds{1}_{\{0\le y<\eps\}}+\eps\left[2-e^{ 1-y/\eps }\right]\mathds{1}_{\{y\ge \eps\}}.
\]
Straightforward calculations allow to show that $g_\eps\in \mathcal{C}^1(\R\setminus\{0\})$ and it is semi-concave, in the sense that $y\mapsto g_\eps(y)-y^2$ is concave. Moreover, we have 
\begin{align}
\label{eq:g1}&0\le g_\eps(y)\le 2\eps, \quad \text{for }y\in\R\\
&g'_\eps(y)=\mathds{1}_{\{0\le y<\eps\}}+e^{ 1-y/\eps }\mathds{1}_{\{y\ge \eps\}}, \quad \text{for }y\in\R\\
&g''_\eps|_{(-\infty,0)}=g''_\eps|_{(0,\eps)}=0,\\
\label{eq:g4}&g''_\eps(y)=-\eps^{-1}e^{ 1-y/\eps },\quad \text{for }y>\eps.
\end{align}
Now, an application of It\^o-Tanaka formula gives
\begin{align}
&g_\eps(\bar Y_t)-g_\eps(\bar Y_0)\notag\\
&=\int_0^t g'_\eps(\bar Y_s)\mathds{1}_{\{\bar Y_s\neq 0\}}\di\bar Y_s+\frac{1}{2}\int_0^t g''_\eps(\bar Y_s)\mathds{1}_{\{\bar Y_s\neq 0\}\cap\{\bar Y_s\neq\eps\}}\di \langle \bar Y\rangle_s\\
&\:\:\:+\frac{1}{2}[g'_\eps(0+)-g'_\eps(0-)]L^0_t(\bar Y)\notag
\end{align}
where $g'_\eps(0\pm)$ denotes the left/right limit of the derivative at zero. Rearranging terms, taking expectations and using \eqref{eq:g1}--\eqref{eq:g4} gives \eqref{eq:l0}.
\end{proof}
  
The next lemma controls the approximation error between $u$ and $u^N$. We recall that $a^N = P_{\eta_N} b^N$.
\begin{lemma}\label{lem: morrey}
Let Assumption \ref{ass:mainFIR} hold and fix $(\delta,p)\in {\mathcal{K}}(\beta,q)$. Let $u, u^N\in \mathcal{C}([0,T];H_p^{1+\delta})$ be the mild solutions to \eqref{eq: pde} and \eqref{eq:approxpde}, respectively. Then, for $\rho>\rho_0$ and $N>N_0$ as in Remark \ref{rem:lN}, and all $t\in[0,T]$ we have 
    \begin{align}\label{eq:ineq}
    \left\{
    \begin{array}{l}
    \norme{u^N(t) - u(t)}_{L^\infty}\leq \kappa_\rho\norme{b-a^N}_{{\infty},H^{-\beta}_{q}}\\[+10pt]        
    \norme{\nabla u^N(t) - \nabla u(t)}_{L^\infty}\leq \kappa_\rho \norme{b-a^N}_{{\infty},H^{-\beta}_{q}}
    \end{array}
    \right.
    \end{align} 
with 
\begin{equation}\label{eq:kappa}
\kappa_\rho :=c\cdot c(\rho)\cdot e^{\rho T},
\end{equation}
where $c(\rho)>0$ is given in \eqref{eq:crho} and $c>0$.
\end{lemma}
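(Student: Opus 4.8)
The plan is to transfer the convergence rate from the fractional Sobolev norm $\|u-u^N\|^{(\rho)}_{\infty,H^{1+\delta}_p}$, controlled by Lemma~\ref{lem:conv}, to the $L^\infty$ and gradient $L^\infty$ norms, using fractional Morrey's inequality \eqref{eq:morrey}. First I would recall that for $(\delta,p)\in\mathcal K(\beta,q)$ we have, by the definition of $\mathcal K$, that $\delta>\beta>0$ and $1/p<\delta$, so with $\gamma:=\delta-1/p>0$ the embedding $H^{1+\delta}_p\subset\mathcal C^{1,\gamma}\subset\mathcal C^{1,0}$ holds with a uniform embedding constant; in particular there is $c>0$ with $\|f\|_{L^\infty}+\|\nabla f\|_{L^\infty}\le\|f\|_{\mathcal C^{1,\gamma}}\le c\,\|f\|_{H^{1+\delta}_p}$ for all $f\in H^{1+\delta}_p$. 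Applying this to $f=u^N(t)-u(t)\in H^{1+\delta}_p$ for each fixed $t\in[0,T]$ gives
\begin{align*}
\|u^N(t)-u(t)\|_{L^\infty}+\|\nabla u^N(t)-\nabla u(t)\|_{L^\infty}\le c\,\|u^N(t)-u(t)\|_{H^{1+\delta}_p}.
\end{align*}

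Next I would pass to the $\rho$-weighted norm: since $e^{-\rho t}\le 1$ for $t\in[0,T]$ and $\rho\ge0$, we have $\|u^N(t)-u(t)\|_{H^{1+\delta}_p}\le e^{\rho t}\,e^{-\rho t}\|u^N(t)-u(t)\|_{H^{1+\delta}_p}\le e^{\rho T}\sup_{0\le s\le T}e^{-\rho s}\|u^N(s)-u(s)\|_{H^{1+\delta}_p}=e^{\rho T}\|u-u^N\|^{(\rho)}_{\infty,H^{1+\delta}_p}$. Then I would invoke Lemma~\ref{lem:conv}: under Assumption~\ref{ass:mainFIR}, with $a^N\to b$ in $\mathcal C([0,T];H^{-\beta}_q)$ (which holds by Remark~\ref{rm:b^N}(ii) once $\beta>\beta_0$, and in the boundary setting via the hypotheses of the ambient propositions), and for $\rho>\rho_0$ large enough that the denominator in \eqref{eq:bconv} is positive, we get $\|u-u^N\|^{(\rho)}_{\infty,H^{1+\delta}_p}\le c(\rho)\,\|b-a^N\|_{\infty,H^{-\beta}_q}$ with $c(\rho)$ as in \eqref{eq:crho}. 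Chaining the three inequalities yields both bounds in \eqref{eq:ineq} with $\kappa_\rho=c\cdot c(\rho)\cdot e^{\rho T}$, and splitting the sum of the two $L^\infty$-norms (each is bounded by the sum) gives the two separate estimates as stated.

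The argument is essentially a concatenation of already-established facts, so there is no deep obstacle; the only points requiring care are bookkeeping ones. I would need to make sure the same $(\delta,p)\in\mathcal K(\beta,q)$ is admissible both for the Morrey embedding (needs $\gamma=\delta-1/p>0$, automatic on $\mathcal K$) and for Lemma~\ref{lem:conv} (needs $(\delta,p)\in\mathcal K(\beta,q)$, same set), and that $\rho>\rho_0$ and $N>N_0$ are chosen as in Remark~\ref{rem:lN} so that $\lambda_0$, hence the fixed-point construction and the positivity of the denominator in \eqref{eq:bconv}, depend only on $\delta,\beta,\|b\|_{\infty,H^{-\beta}_q}$ and not on $N$. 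The genuinely mildly delicate step is verifying that the hypothesis $a^N\to b$ in $\mathcal C([0,T];H^{-\beta}_q)$ of Lemma~\ref{lem:conv} is in force: this is exactly Remark~\ref{rm:b^N}(ii), valid for $\beta\in(\beta_0,\tfrac12)$, and since $\mathcal K(\beta,q)$ and the Morrey embedding only require $\beta>0$ this causes no loss. Everything else — the uniformity in $t$ of the embedding constant, the elementary bound $e^{-\rho t}\le1$, and absorbing the various constants into $c$ — is routine.
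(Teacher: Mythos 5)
Your proposal is correct and follows essentially the same route as the paper's proof: fractional Morrey embedding $H^{1+\delta}_p\subset\mathcal C^{1,\gamma}$ applied to $u^N(t)-u(t)$, the elementary comparison $\norme{\cdot}_{\infty,H^{1+\delta}_p}\le e^{\rho T}\norme{\cdot}^{(\rho)}_{\infty,H^{1+\delta}_p}$, and then Lemma \ref{lem:conv} to bound the weighted norm by $c(\rho)\norme{b-a^N}_{\infty,H^{-\beta}_q}$. The bookkeeping points you flag (admissibility of $(\delta,p)$, the choice of $\rho>\rho_0$, $N>N_0$ via Remark \ref{rem:lN}) match the paper's treatment.
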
    

\begin{proof}
Let $\gamma:=\delta-1/p$. Since $\gamma>0$, by the  fractional Morrey's inequality \eqref{eq:morrey} we have $H^{1+\delta}_p\subset \mathcal{C}^{1,\gamma}$ and we can find $c>0$ such that for all $t\in[0,T]$ it holds
    \begin{equation}\label{eq:M1}
    \begin{cases}
    \norme{u^N(t) - u(t)}_{L^\infty} &\leq\norme{u^N(t) - u(t)}_{\mathcal{C}^{1,\gamma}}\leq c\norme{u^N(t)-u(t)}_{H^{1+\delta}_{p}}\\        
    \norme{\nabla u^N(t) - \nabla u(t)}_{L^\infty}&\leq\norme{u^N(t) - u(t)}_{\mathcal{C}^{1,\gamma}}  \leq c\norme{u^N(t)-u(t)}_{H^{1+\delta}_{p}}.
    \end{cases}        
    \end{equation}        
    Then, recalling \eqref{eq:norm-rho} and \eqref{eq:bconv} we easily obtain
    \begin{equation}\label{eq:M2}
    \norme{u^N-u}_{\infty,H^{1+\delta}_{p}}\leq e^{\rho T} \norme{u^N-u}_{\infty,H^{1+\delta}_{p}}^{(\rho)}\leq c(\rho)e^{\rho T}\norme{b-a^N}_{{\infty},H^{-\beta}_{q}}.
    \end{equation}
Combining \eqref{eq:M1} and \eqref{eq:M2} gives \eqref{eq:ineq}.
\end{proof}

Now we provide a bound on the difference $\psi-\psi^N$, where $\psi$ is defined in \eqref{eq:psi}. 

\begin{lemma}\label{lem: psi}
Take $\rho\!>\!\rho_0$ and $N\!>\!N_0$ as in Remark \ref{rem:lN} and $\kappa_\rho$ as in Lemma \ref{lem: morrey}. Under Assumption \ref{ass:mainFIR} we have
\begin{equation}\label{eq:psineq}
    \sup_{(t,x)\in[0,T]\times\mathbb{R}}\ \left|\psi\left(t,x\right)-\psi^N\left(t,x\right)\right|\leq 2\kappa_\rho \norme{b-a^N}_{{\infty},H^{-\beta}_{q}}.
    \end{equation} 
\end{lemma}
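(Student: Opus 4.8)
The plan is to combine the defining relations of $\psi$ and $\psi^N$ as spatial inverses of $\varphi$ and $\varphi^N$ with the uniform estimates already established in Lemmas \ref{lem: grad} and \ref{lem: morrey}. Fix $(t,x)\in[0,T]\times\R$ and set $y:=\psi(t,x)$ and $y^N:=\psi^N(t,x)$; these are well defined because $\varphi(t,\cdot)$ and $\varphi^N(t,\cdot)$ are invertible by Lemma \ref{lem: grad}. By \eqref{eq:phi} and the analogous definition of $\varphi^N$, the inverse relations read $y+u(t,y)=x=y^N+u^N(t,y^N)$, and subtracting them gives the key identity
\[
y-y^N = u^N(t,y^N)-u(t,y).
\]

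Next I would decompose the right-hand side as $\big(u^N(t,y^N)-u(t,y^N)\big)+\big(u(t,y^N)-u(t,y)\big)$. The first bracket is bounded by $\norme{u^N(t)-u(t)}_{L^\infty}\le \kappa_\rho\norme{b-a^N}_{{\infty},H^{-\beta}_{q}}$ by the first inequality in \eqref{eq:ineq}. For the second bracket I would use the mean value theorem together with the gradient bound $\sup_{(t,x)\in[0,T]\times\R}|\nabla u(t,x)|\le\tfrac12$ from Lemma \ref{lem: grad}, so that $|u(t,y^N)-u(t,y)|\le\tfrac12|y^N-y|$. Combining these gives $|y-y^N|\le \kappa_\rho\norme{b-a^N}_{{\infty},H^{-\beta}_{q}}+\tfrac12|y-y^N|$, and absorbing the last term on the left yields $|y-y^N|\le 2\kappa_\rho\norme{b-a^N}_{{\infty},H^{-\beta}_{q}}$. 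Since this bound is independent of $(t,x)$, taking the supremum over $(t,x)\in[0,T]\times\R$ proves \eqref{eq:psineq}.

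There is essentially no obstacle in this argument — it is a short Lipschitz/implicit-function estimate. The only points deserving attention are: (i) ensuring $y$ and $y^N$ are well defined, which follows from the invertibility in Lemma \ref{lem: grad}; (ii) checking that the constant $\kappa_\rho$ (from Lemma \ref{lem: morrey}) and the Lipschitz constant $\tfrac12$ are both uniform in $t\in[0,T]$, which they are by construction; and (iii) noting that one could equally well split using the Lipschitz property of $u^N(t,\cdot)$ instead of $u(t,\cdot)$, which gives the same conclusion.
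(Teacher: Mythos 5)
Your proof is correct and follows essentially the same route as the paper: both exploit the identity $\varphi(t,\psi(t,x))=x=\varphi^N(t,\psi^N(t,x))$, bound $u^N-u$ at the point $\psi^N(t,x)$ in $L^\infty$ via Lemma \ref{lem: morrey}, and use the $\tfrac12$-bound on $\nabla u$ from Lemma \ref{lem: grad}. The only cosmetic difference is that the paper packages the $\tfrac12$-Lipschitz step as a lower bound $|\varphi(t,y)-\varphi(t,y')|\ge\tfrac12|y-y'|$, whereas you absorb the term $\tfrac12|y-y^N|$ on the left, which is the same estimate rearranged.
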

\begin{proof}
Recall that $\varphi\in\mathcal C([0,T]; \mathcal C^1)$ was defined in \eqref{eq:phi}. For any $y,y'\in\R$ we have
\begin{align*}
&|\varphi(t,y)-\varphi(t,y')|^2\\
&\ge |u(t,y)-u(t,y')|^2+|y-y'|^2-2|u(t,y)-u(t,y')||y-y'|\\
&=\left(|y-y'|-|u(t,y)-u(t,y')|\right)^2\ge \tfrac{1}{4}|y-y'|^2
\end{align*}
where the final inequality uses  Lemma \ref{lem: grad}. 
Now, taking $y=\psi(t,x)$ and $y'=\psi^N(t,x)$ in the above inequality gives
    \begin{align*}
    \left|\varphi\left(t,\psi\left(t,x\right)\right)-\varphi\left(t,\psi^N\left(t,x\right)\right)\right| 
    &\geq \frac{1}{2} \left|\psi\left(t,x\right)-\psi^N\left(t,x\right)\right|.
    \end{align*}    
    The latter implies
    \begin{align*}
    \left|\psi\left(t,x\right)-\psi^N\left(t,x\right)\right|&\leq 2\ \left|\varphi\left(t,\psi\left(t,x\right)\right)-\varphi\left(t,\psi^N\left(t,x\right)\right)\right|
    \\ &
    = 2\ \left|\varphi^N\left(t,\psi^N\left(t,x\right)\right)-\varphi\left(t,\psi^N\left(t,x\right)\right)\right|
    \end{align*} 
where the final equality uses $\varphi(t,\psi(t,x))=x=\varphi^N(t,\psi^N(t,x))$. By definition of $\varphi$ and $\varphi^N$ and \eqref{eq:ineq}, from Lemma \ref{lem: morrey} we also obtain 
\begin{align*}
&\left|\varphi^N\left(t,\psi^N\left(t,x\right)\right)-\varphi\left(t,\psi^N\left(t,x\right)\right)\right|\\
&=\left|u^N\left(t,\psi^N\left(t,x\right)\right)-u\left(t,\psi^N\left(t,x\right)\right)\right|\\
&\le \norme{u(t)-u^N(t)}_{L^\infty}\le \kappa_\rho\|b-a^N\|_{{\infty},H^{-\beta}_{ q}}.
\end{align*}    
Combining the above expressions we get \eqref{eq:psineq}.    
\end{proof}
 
In the next proposition we provide an upper bound for the local time at zero of $Y-Y^N$. For $r>0$ we denote by $o(r)$ a generic function with $o(r)/r\to 0$ as $r\to 0$.
\begin{proposition}\label{prop:localtime}
Let Assumption \ref{ass: main} and \ref{ass: b^N} hold. Take arbitrary $(\beta,q)$ that satisfy Assumption \ref{ass:mainFIR} and such that $\beta\in[\beta_0,\tfrac{1}{2})$ and  $q_0\geq q > \tilde q\ge \tilde q_0$. Then, for any $1/2<\gamma<\gamma_0$, we have
\begin{align}\label{eq:localtime}
\E\left[L^0_T(Y^N-Y)\right]\le& c_\gamma\|a^N-b\|^{2\gamma-1}_{\infty,H^{-\beta}_{q}}+c'\E\left[\int_0^T|Y^N_s-Y_s|\di s\right]\\
& + o( \|a^N-b\|^{2\gamma-1}_{\infty,H^{-\beta}_{q}})\notag
\end{align}
as $N\to\infty$, with $c_\gamma,c'>0$ given constants ($c_\gamma$ depending on $\gamma$). 
\end{proposition}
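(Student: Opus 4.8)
The plan is to apply Lemma \ref{lm:g} to the continuous semimartingale $\bar Y:=Y^N-Y$ and then estimate, one by one, the terms on the right-hand side of \eqref{eq:l0}. Since we are in dimension one, Remark \ref{rem:strong} lets us realise $Y$ and $Y^N$ as strong solutions of \eqref{eq: Y} and \eqref{eq: approximated Y} driven by the \emph{same} Brownian motion $W$; then $\bar Y$ is a continuous semimartingale whose finite-variation part has density $(\lambda+1)\big[u^N(s,\psi^N(s,Y^N_s))-u(s,\psi(s,Y_s))\big]$ and whose quadratic variation is $\di\langle\bar Y\rangle_s=\big[\nabla u^N(s,\psi^N(s,Y^N_s))-\nabla u(s,\psi(s,Y_s))\big]^2\di s$. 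I would fix $(\delta,p)\in\mathcal{H}(\beta_0,q_0)$ with $\gamma:=\delta-1/p\in(\tfrac12,\gamma_0)$ (possible by Remark \ref{rem:uniqueness}(iii)), so that $\nabla u(s,\cdot),\nabla u^N(s,\cdot)\in\mathcal{C}^\gamma$ with seminorms bounded uniformly in $N$ (because $u^N\to u$ in $\mathcal{C}([0,T];H^{1+\delta}_p)$ by Lemma \ref{lem:conv}), and take $\rho>\rho_0$, $N>N_0$ as in Remark \ref{rem:lN}, so that Lemmas \ref{lem: morrey}--\ref{lem: psi} hold with an $N$-independent constant $\kappa_\rho$. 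Taking expectations in \eqref{eq:l0}, the martingale part of $\int_0^T g'_\varepsilon(\bar Y_s)\di\bar Y_s$ has zero mean (bounded integrand), so there remain three contributions: the $4\varepsilon$ term, a ``drift term'' from the finite-variation part of $\bar Y$, and a ``quadratic-variation term''.

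For the drift term I would bound $e^{1-\bar Y_s/\varepsilon}\le1$ and then estimate $|u^N(s,\psi^N(s,Y^N_s))-u(s,\psi(s,Y_s))|$ by adding and subtracting $u(s,\psi^N(s,Y^N_s))$: the first difference is $\le\|u^N(s)-u(s)\|_{L^\infty}\le\kappa_\rho\|b-a^N\|_{\infty,H^{-\beta}_q}$ by Lemma \ref{lem: morrey}, while for the second I would use that $u(s,\cdot)$ is $\tfrac12$-Lipschitz (Lemma \ref{lem: grad}), the $2$-Lipschitz bound on $\psi^N(s,\cdot)$, and Lemma \ref{lem: psi}, to get $\le\tfrac12\big(2|\bar Y_s|+2\kappa_\rho\|b-a^N\|_{\infty,H^{-\beta}_q}\big)$. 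Hence the drift term is at most $c\,\kappa_\rho\|b-a^N\|_{\infty,H^{-\beta}_q}+c'\,\E\big[\int_0^T|\bar Y_s|\di s\big]$, which produces the $\E[\int_0^T|Y^N_s-Y_s|\di s]$ term of \eqref{eq:localtime}.

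For the quadratic-variation term I would, by the analogous splitting but now using only the $\gamma$-H\"older regularity of $\nabla u(s,\cdot)$ and $\nabla u^N(s,\cdot)$ together with Lemmas \ref{lem: morrey}--\ref{lem: psi}, obtain $\di\langle\bar Y\rangle_s\le C\big(|\bar Y_s|^{2\gamma}+\|b-a^N\|^{2\gamma}_{\infty,H^{-\beta}_q}+\|b-a^N\|^{2}_{\infty,H^{-\beta}_q}\big)\di s$. In $\tfrac1\varepsilon\E\big[\int_0^T\mathds{1}_{\{\bar Y_s>\varepsilon\}}e^{1-\bar Y_s/\varepsilon}\di\langle\bar Y\rangle_s\big]$ the $\|b-a^N\|$-parts are controlled by $\mathds{1}_{\{\bar Y_s>\varepsilon\}}e^{1-\bar Y_s/\varepsilon}\le1$ and integration over $[0,T]$, giving $\tfrac{CT}{\varepsilon}\big(\|b-a^N\|^{2\gamma}_{\infty,H^{-\beta}_q}+\|b-a^N\|^{2}_{\infty,H^{-\beta}_q}\big)$; for the $|\bar Y_s|^{2\gamma}$-part I would use the elementary pointwise bound $\tfrac1\varepsilon y^{2\gamma}e^{1-y/\varepsilon}\le C_\gamma\varepsilon^{2\gamma-1}$, valid for $y\ge\varepsilon$ and some constant $C_\gamma$, to get $C_\gamma CT\,\varepsilon^{2\gamma-1}$. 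Collecting all terms and then choosing $\varepsilon=\|b-a^N\|_{\infty,H^{-\beta}_q}$ (which is in $(0,1)$ for $N$ large, since $a^N\to b$ in $\mathcal{C}([0,T];H^{-\beta}_q)$ by Remark \ref{rm:b^N}(ii)) makes $\tfrac1\varepsilon\|b-a^N\|^{2\gamma}_{\infty,H^{-\beta}_q}=\varepsilon^{2\gamma-1}=\|b-a^N\|^{2\gamma-1}_{\infty,H^{-\beta}_q}$, while $4\varepsilon$, $\kappa_\rho\|b-a^N\|_{\infty,H^{-\beta}_q}$ and $\tfrac1\varepsilon\|b-a^N\|^{2}_{\infty,H^{-\beta}_q}$ are all $o(\|b-a^N\|^{2\gamma-1}_{\infty,H^{-\beta}_q})$ (here one uses $2\gamma-1<1$); this yields \eqref{eq:localtime}, with $c_\gamma$ absorbing $C_\gamma$ and $\kappa_\rho$ and the $O(\cdot)$ collecting the lower-order terms.

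The hard part is the quadratic-variation term. Because $b$ is genuinely a distribution, $\nabla u(s,\cdot)$ is only $\gamma$-H\"older with $\gamma<1$, so $\langle\bar Y\rangle$ grows like $|\bar Y_s|^{2\gamma}$ with $2\gamma>1$ rather than like $|\bar Y_s|^2$, and this cannot simply be closed into $\E[\int_0^T|\bar Y_s|\di s]$ by Gr\"onwall as one would do in the Lipschitz case. The whole point of choosing the auxiliary function $g_\varepsilon$ of Lemma \ref{lm:g} with the exponential tail $g_\varepsilon(y)=\varepsilon(2-e^{1-y/\varepsilon})$ for $y\ge\varepsilon$, whose second derivative satisfies $|g''_\varepsilon(y)|=\varepsilon^{-1}e^{1-y/\varepsilon}$, is precisely to make $\tfrac1\varepsilon|\bar Y_s|^{2\gamma}e^{1-\bar Y_s/\varepsilon}$ bounded by $C_\gamma\varepsilon^{2\gamma-1}$; optimising $\varepsilon$ then forces the exponent $2\gamma-1$ in the statement and makes clear why this route cannot give a better rate.
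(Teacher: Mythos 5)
Your proposal is correct and follows essentially the same route as the paper: realise $Y,Y^N$ as strong solutions driven by the same Brownian motion, apply Lemma \ref{lm:g} to $\bar Y=Y^N-Y$, control the drift and quadratic-variation contributions via Lemmas \ref{lem: grad}, \ref{lem: morrey} and \ref{lem: psi}, and finally take $\eps=\|a^N-b\|_{\infty,H^{-\beta}_q}$. The only genuine deviation is in the problematic H\"older term of the quadratic variation: the paper splits the event $\{\bar Y_s>\eps\}$ into $\{\eps<\bar Y_s\le\eps^\zeta\}$ and $\{\bar Y_s>\eps^\zeta\}$ with an auxiliary $\zeta\in(0,1)$, obtaining $\eps^{2\gamma\zeta-1}$ plus a super-polynomially small remainder and then letting $\zeta\uparrow 1$, whereas you use the direct pointwise inequality $\tfrac1\eps y^{2\gamma}e^{1-y/\eps}\le C_\gamma\,\eps^{2\gamma-1}$ for $y\ge\eps$ (valid since $\sup_{z\ge1}z^{2\gamma}e^{1-z}<\infty$), which exploits the exponential tail of $g_\eps$ in the same way but yields the exponent $2\gamma-1$ exactly and dispenses with the extra parameter; since $\gamma<\gamma_0$ is arbitrary in both arguments, the two variants give the same statement, with yours being marginally cleaner.
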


\begin{proof}
It is clear that $\mathcal H(\beta_0,q_0)\neq\emptyset$ by Remark \ref{rem:uniqueness} and that we can choose $(\beta,q)$ as indicated, thanks to Assumption \ref{ass: main}. Let us also recall Remark \ref{rem:uniqueness}, part (iii), so that for the solution $u$ of \eqref{eq: pde} we have $\nabla u(t,\,\cdot\,)\in\mathcal C^{\gamma}$ for all $1/2<\gamma<\gamma_0$. 

Thanks to \eqref{eq: Y} and \eqref{eq: approximated Y} it is easy to derive the dynamics of $\bar Y:=Y^N-Y$ (recall that $Y$ and $Y^N$ are strong solutions by Remark \ref{rem:strong}). Then, applying Lemma \ref{lm:g} we obtain
\begin{align}\label{eq:e0}
&\E\left[L^0_t(Y^N - Y)\right]\le 4\eps \\
&-\!2(1\!+\!\lambda)\E\!\bigg[\int_0^t\!\! \left(\!\mathds{1}_{\{\bar Y_s\in(0,\varepsilon)\}}\!+\! \mathds{1}_{\{\bar Y_s\ge\varepsilon\}}e^{1-\bar Y_s/\eps}\right)\!\left(u^N(s,\psi^N(s,Y^N_s))\!-\!u(s,\psi(s,Y_s))\right)\di s\bigg]\notag\\
&+\frac{1}{\eps}\E\left[\int_0^t\!\!\mathds{1}_{\{ Y^N_s-Y_s >\eps\}}e^{1-(Y^N_s-Y_s)/\eps}
\!\left(\nabla u^N(s,\psi^N(s,Y^N_s))\!-\!\nabla u(s,\psi(s,Y_s))\right)^2\di s\right]\notag
\end{align}
where we have removed the martingale term. 
Adding and subtracting terms we have
\begin{align*}
&\big|u^N(s,\psi^N(s,Y^N_s))\!-\!u(s,\psi(s,Y_s))\big|\\
&\le \big|u^N(s,\psi^N(s,Y^N_s))\!-\!u(s,\psi^N(s,Y^N_s))\big|+\big|u(s,\psi^N(s,Y^N_s))\!-\!u(s,\psi(s,Y^N_s))\big|\\
&\quad+\big|u(s,\psi(s,Y^N_s))\!-\!u(s,\psi(s,Y_s))\big|.
\end{align*}
In order to estimate the right-hand side of the expression above we use Lemma \ref{lem: grad},  Lemma \ref{lem: morrey} and Lemma \ref{lem: psi}, upon recalling that $a^N\to b$ in the space $\mathcal{C}([0,T];H^{-\beta}_{q})$, by Remark \ref{rm:b^N} part (ii). Since $\psi(s,\cdot)$ and $\psi^N(s,\cdot)$ are 2-Lipschitz, 
and $u(s,\cdot)$ and $u^N(s,\cdot)$ are $\tfrac{1}{2}$-Lipschitz, uniformly in $s\in[0,T]$ we have 
\begin{align}\label{eq:e1}
&\left|u^N(s,\psi^N(s, Y^N_s))-u(s,\psi(s, Y_s))\right|\le 2 \kappa_\rho \|b-a^N\|_{{\infty},H^{-\beta}_{ q}} + |Y^N_s-Y_s|,
\end{align}
for $\rho\!>\!\rho_0$ and $N\!>\!N_0$ as in Remark \ref{rem:lN} and $\kappa_\rho$ as in Lemma \ref{lem: morrey}.
Similarly, for the term in \eqref{eq:e0} involving the gradient of $u$ and $u^N$ we get 
\begin{align}\label{eq:e2}
\big|\nabla u^N&(s,\psi^N(s, Y^N_s ))-\nabla u(s,\psi(s, Y_s))\big| \\
\le &\kappa_\rho \|b-a^N\|_{{\infty},H^{-\beta}_{ q}} +  2^{\gamma+1}  \kappa_\rho^{\gamma} \| u\|_{\infty,\mathcal{C}^{1,\gamma}} \|b-a^N\|^{\gamma}_{{\infty},H^{-\beta}_{q}} \notag\\ 
& + \left|\nabla u(s,\psi(s, Y^N_s))-\nabla u(s,\psi(s, Y_s )) \right|,\notag
\end{align} 
where for the second term on the right-hand side above we used that $\nabla u(t,\,\cdot\,)$ is $\gamma$-H\"older continuous for any $1/2<\gamma<\gamma_0$, and then used Lemma \ref{lem: psi}. Now, plugging \eqref{eq:e1} and \eqref{eq:e2} into \eqref{eq:e0} and using the well-known inequality 
\[
(x_1+x_2+\ldots+x_k)^2\le k(x_1^2+\ldots+x^2_k )
\]
we obtain
\begin{align}\label{eq:l1}
\E&\left[L^0_t(Y^N - Y)\right]\\
\le& 4\eps + 4(1\!+\!\lambda)\left( 2\kappa_\rho t\|a^N-b\|_{{\infty},H^{-\beta}_{q}} + \E\!\left[\int_0^t|Y^N_s-Y_s|\di s\right]\right) \notag\\
&+\frac{1}{\eps}3t\|a^N-b\|_{{\infty},H^{-\beta}_{q}}\left(\kappa^2_\rho\|a^N-b\|_{{\infty},H^{-\beta}_{q}}+4(2\kappa_\rho)^{2\gamma} \|u\|^2_{\infty,\mathcal{C}^{1,\gamma}}\|a^N-b\|^{2\gamma-1}_{{\infty},H^{-\beta}_{q}}\right)\notag\\
&+\frac{1}{\eps} 3\E\left[\int_0^t\!\!\mathds{1}_{\{ Y^N_s-Y_s >\eps\}}e^{1-(Y^N_s-Y_s)/\eps} \left|\nabla u(s,\psi(s,Y^N_s))-\nabla u(s,\psi(s,Y_s))\right|^2 \di s\right]. \notag
\end{align}

For simplicity, we denote by $I^{N,\eps}_t$ the last term in \eqref{eq:l1}. To find an upper bound for $I^{N,\eps}$ we pick $\zeta\in(0,1)$ such that $\gamma\zeta>1/2$ and recall that $\eps\in(0,1)$ so that $\eps^\zeta>\eps$. Using the fact that $\nabla u(s,\cdot)$ is $\gamma$-H\"older continuous uniformly in $s\in[0,T]$ with constant $\|u\|_{\infty,\mathcal{C}^{1,\gamma}}$, that $\psi$ is 2-Lipschitz and that $\nabla u$ is uniformly bounded by $1/2$ thanks to Lemma \ref{lem: grad}, we get
\begin{align}\label{eq:l2}
I^{N,\eps}_t\le& \frac{1}{\eps} 3 \E\left[\int_0^t\!\!\mathds{1}_{\{ \eps< Y^N_s-Y_s \le \eps^\zeta\}}e^{1-(Y^N_s-Y_s)/\eps}    2^{2\gamma} \|u\|^2_{\infty,\mathcal{C}^{1,\gamma}} |Y^N_s\!-\!Y_s|^{2\gamma} \di s\right]\\
&+\frac{1}{\eps} 3 \E\left[\int_0^t\!\!\mathds{1}_{\{ Y^N_s-Y_s > \eps^\zeta\}}e^{1-(Y^N_s-Y_s)/\eps} (2\|\nabla u\|_{L^\infty})^2\di s\right]\notag\\
\le & \frac{1}\eps \left( 3 T 2^{2\gamma}\|u\|^2_{\infty,\mathcal{C}^{1,\gamma}}
\eps^{2\gamma\zeta}+ 3T e^{1-\eps^{\zeta-1}} \right)\notag.
\end{align} 
With no loss of generality we can take $\eps=\|a^N-b\|_{{\infty},H^{-\beta}_{q}}$. Combining \eqref{eq:l1} and \eqref{eq:l2} we then find
\begin{align*}
\E&\left[L^0_t(Y^N - Y)\right]\\
\le& c_1 \|a^N-b\|_{{\infty},H^{-\beta}_{  q}}+c_2\|a^N-b\|^{2\gamma-1}_{{\infty},H^{-\beta}_{ q}}+c_3\|a^N-b\|^{2\gamma\zeta-1}_{{\infty},H^{-\beta}_{ q}}\notag\\
&+c_4\|a^N-b\|^{-1}_{{\infty},H^{-\beta}_{ q}}\exp\left(1-\|a^N-b\|^{\zeta-1}_{{\infty},H^{-\beta}_{  q}}\right)\notag\\
&+c_5\E\left[\int_0^t|Y^N_s-Y_s|\di s\right],\notag
\end{align*}
where
\begin{align*}
&c_1=4\!+\!8T(1+\lambda)\kappa_\rho + 3T\kappa^2_\rho\,,\qquad c_2=12 T (2\kappa_\rho)^{2\gamma} \|u\|^2_{\infty,\mathcal{C}^{1,\gamma}}\,,\\
&c_3=3 T  2^{2\gamma}\|u\|^2_{\infty,\mathcal{C}^{1,\gamma}}\,,\qquad c_4=3T\,,\qquad c_5=4(1+\lambda) .
\end{align*}
Since $\zeta\in(0,1)$, the term containing the exponential goes to zero faster than any polynomial as $N\to\infty$. Moreover, $\zeta$ can be taken arbitrarily close to one and $1/2<\gamma<\gamma_0$ was also arbitrary, hence \eqref{eq:localtime} holds and the proof is complete with $c_\gamma=c_3$ and $c'=c_5$.
\end{proof}

We are now ready to prove Proposition \ref{pr:main2}, which we recall below for the reader's convenience.

\noindent\textbf{Proposition \ref{pr:main2}.}
{\em Let Assumptions \ref{ass: main} and \ref{ass: b^N} hold. 
Take any $(\beta,q)$ such that $\beta\in(\beta_0,\tfrac{1}{2})$ and $q_0\ge q>\tilde q\ge \tilde q_0$, where $\tilde q:=(1-\beta)^{-1}$. Then, for any $1/2<\gamma<\gamma_0$ there is a constant $C_{\gamma}>0$ such that 
\begin{align}\label{eq:main2b}
\sup_{0\le t \le T}\E&\left[|X^N_t\!-\!X_t|\right]\le C_{\gamma} \|P_{\eta_N}\,b^N-b\|^{2\gamma-1}_{\infty,H^{-\beta}_q}  
\end{align}
as $N\to\infty$.}

\begin{proof} 
Recall the embedding $b\in \mathcal C([0,T]; H^{-\beta_0}_{\tilde q_0, q_0}) \subset \mathcal C([0,T]; H^{-\beta}_{\tilde q, q})$ (Remark \ref{rm:b^N} part (i)) and notice that $a^N=P_{\eta_N}b^N\to b$ in $\mathcal{C}([0,T];H^{-\beta}_{ q})$ as $N\to\infty$ (Remark \ref{rm:b^N} part (ii)).
 Next we note that
\begin{align*}
|X^N_t-X_t|=&|\psi^N(t,Y^N_t)-\psi(t,Y_t)|\\
\le& |\psi^N(t,Y^N_t)-\psi(t,Y^N_t)|+|\psi(t,Y^N_t)-\psi(t,Y_t)|\\
\le&2\kappa_\rho\|b-a^N\|_{{\infty},H^{-\beta}_{ q}}+2 |Y^N_t-Y_t| ,
\end{align*}
where in the final inequality we have used Lemma \ref{lem: psi} and that $\psi(t,\cdot)$ is 2-Lipschitz, uniformly in $t\in[0,T]$ (Lemma \ref{lem: grad}). Therefore it is sufficient to find a bound for $|Y^N_t-Y_t|$.

From It\^o-Tanaka formula we get
\begin{align*}
|Y^N_t-Y_t|=|y^N_0-y_0|+\int_0^t\text{sign}(Y^N_s-Y_s)\di (Y^N_s-Y_s)+\frac{1}{2}L^0_t(Y^N-Y).
\end{align*}
Taking expectation, using \eqref{eq: Y} and \eqref{eq: approximated Y} and removing the martingale term we obtain
\begin{align}\label{eq:35}
\E&\left[|Y^N_t\!-\!Y_t|\right]\\
=&|u^N(0,x)-u(0,x)|+  \frac12 \E\left[L^0_t(Y^N-Y)\right]\notag\\
&+\!(1\!+\!\lambda)\E\left[\int_0^t\!\text{sign}(Y^N_s\!-\!Y_s)\left(u^N(s,\psi^N(s,Y^N_s))\!-\!u^N(s,\psi(s,Y_s)\right)\di s\right]\notag\\
\notag \le &|u^N(0,x)-u(0,x)|+\frac{1}{2}\E\left[L^0_t(Y^N-Y)\right]\\
&+ 2(1\!+\!\lambda)t \kappa_\rho  \|b-a^N\|_{{\infty},H^{-\beta}_{q}} +(1\!+\!\lambda) \E\left[\int_0^t\!|Y^N_s-Y_s|\di s\right],\notag  
\end{align}
where the inequality follows from the bound \eqref{eq:e1} used in Proposition \ref{prop:localtime}. Using \eqref{eq:ineq} we have 
\[
|u^N(0,x)-u(0,x)|\le \kappa_\rho \|a^N-b\|_{{\infty},H^{-\beta}_{q}}. 
\]
Thanks to Proposition \ref{prop:localtime}, we have an upper bound for the local time and, in particular, for any $1/2<\gamma<\gamma_0$ we have
\begin{align}\label{eq:36}
\E&\left[|Y^N_t\!-\!Y_t|\right]\le C\|a^N-b\|^{2\gamma-1}_{{\infty},H^{-\beta}_{q}}+C'\E\left[\int_0^t|Y^N_s-Y_s|\di s\right]+ o\big(\|a^N-b\|^{2\gamma-1}_{{\infty},H^{-\beta}_{q}} \big),
\end{align}
as $N\to\infty$, where
\[
C=\tfrac{3}{2}2^{2\gamma}T\|u\|^2_{\infty,\mathcal{C}^{1,\gamma}},\quad C'=3(1+\lambda).
\]
By an application of Gronwall's lemma we conclude the proof and the constant $C_{\gamma}>0$ in \eqref{eq:main2b} can be taken as
$C_{\gamma}= (1+C) e^{TC'}$ since $o\big(\|a^N-b\|^{2\gamma-1}_{{\infty},H^{-\beta}_{q}} \big)\leq \|a^N-b\|^{2\gamma-1}_{{\infty},H^{-\beta}_{q}}$ for $N$ large enough.
\end{proof}

%%%%%%%%%%%%%%%%%%%%%%%%%%%%%%%%%%%%%%%%%%%%%%%%%%%%%%%%%%%%%%%%%%%%%%%%%%%%%%%%%%

\section{Convergence rate for Euler-Maruyama scheme}\label{sec:thm2} 
In this section we prove Proposition \ref{thm:EM}, which gives a bound for 
\begin{align}\label{eq:diff}
\E\left[|X^{N,m}_t-X^N_t|\right],
\end{align}
where $N$ is fixed and $m$ tends to infinity. Let us start with some initial considerations.

For each $N$ the drift in the equation for $X^N$ is certainly bounded, $\tfrac{1}{2}$-H\"older continuous in time and Lipschitz continuous in space, since $b^N$ is bounded measurable. Therefore we could use classical results (see, e.g., \cite{kloeden}) on the convergence of the Euler-Maruyama scheme in order to obtain a rate of convergence of order $m^{-1/2}$. Following this approach, the multiplicative constant in front of the rate $m^{-1/2}$ depends exponentially on the Lipschitz constant of the drift, i.e., in our case on the exponential of $\|\nabla (P_{\eta_N} b^N)\|_{\infty,L^\infty}$.

Substantially more refined results for SDEs with additive noise were obtained recently by \cite{dareiotis2020}, who found a strong convergence rate of order $m^{-1/2}$ under the sole requirement of a bounded (time-homogeneous) drift; see \cite[Theorem 1.2]{dareiotis2020}. In that theorem the multiplicative constant that appears in front of the rate $m^{-1/2}$ depends exponentially on the $L^\infty$-norm of the drift (\cite[Lemma 2.2]{dareiotis2020}), i.e., in our case on the exponential of $\|P_{\eta_N} b^N\|_{\infty,L^\infty}$.

Both $\|\nabla (P_{\eta_N} b^N)\|_{\infty,L^\infty}$ and $\|P_{\eta_N} b^N\|_{\infty,L^\infty}$ explode as $N\to\infty$ (and $\eta_N\to0$) at a rate depending on the inverse of $\eta_N$, hence increasing the overall approximation error exponentially as we send $(N,m)\to\infty$ at the same time. Of course one could let $\eta_N\to 0$ very slowly, in order to compensate for the exponential explosion, but this would produce a very slow rate of convergence of $X^N\to X$ (see Proposition \ref{pr:main2}) hence deteriorating, once again, the overall convergence rate. Here we find a compromise by contenting ourselves with a convergence rate for the Euler Maruyama scheme of order $m^{-1/2}$  (see \eqref{eq:EM})
but with a multiplicative constant (see \eqref{eq:C23}) which grows polynomially with the inverse of $\eta_N$ (rather than exponentially). 

We use a transformation which is the analogue of the one used to define the virtual solutions. That is, we transform the processes $ X^N $ and $ X^{N,m}$ into new processes $Y^N $ and $ Y^{N,m}$ whose dynamics are expressed in terms of It\^o's diffusions with `nice' coefficients.

Throughout this section Assumptions \ref{ass: main} and \ref{ass: b^N} are enforced.
Since the index $N$ is fixed, it is convenient to simplify the notation and write
\[
\XX:=X^N,\quad\XX^m:=X^{N,m},\quad\aa:=a^N=P_{\eta_N}b^N,\quad \hat u:=u^N,
\]
and denote 
\begin{align*}
&\hat\varphi(t,x):=\varphi^N(t,x)  = x + \hat  u (t,x)\quad\text{and}\quad\hat \psi (t,x):=\psi^N (t,x). 
\end{align*}
Using this notation we can define $\YY_t := \hat \varphi (t, \XX_t)$ and $\YY^m_t :=\hat \varphi (t, \XX^m_t)$ so that $\XX_t  = \hat \psi (t, \YY_t)$ and $\XX^m=\hat \psi (t, \YY^m_t)$. Recalling that $\hat \psi(t, \cdot)$  is 2-Lipschitz, uniformly in $t$, we obtain  
\begin{equation}\label{eq:X<Y}
\E\left[|\XX^{m}_t-\XX_t|\right]\leq 2 \E\left[|\YY^{m}_t-\YY_t|\right].
\end{equation}
In order to estimate the right-hand side in the expression above, we first find the dynamics of $\YY$ and $\YY^m$ in the next lemma. 

\begin{lemma}\label{lem:virt}
The dynamics of $\YY$ is given by
\begin{align}\label{eq:YY1}
\YY_t=\hat \varphi(0,x) \!+\!(1\!+\!\lambda)\int_0^t\! \hat u(s,\hat\psi(s,\YY_s))\di s\!+\!\int_0^t\!(\nabla \hat u(s,\hat \psi(s,\YY_s))\!+\!1)\di W_s,
\end{align}
for all $t\in[0,T]$. Moreover, for any $m\ge 1$   the dynamics of $\YY^m$  is given by
\begin{align}\label{eq:YY2}
\YY^m_t=\hat \varphi(0,x) \! +\!(1\!+\!\lambda)\int_0^t\! \hat u(s,\hat\psi(s,\YY^m_s))\di s\!+\!\int_0^t\!(\nabla \hat u(s,\hat \psi(s,\YY^m_s))\!+\!1)\di W_s\!+\!  E^m_t,
\end{align}
 for all $t\in[0,T]$, where the `{\em error process}' $E^m$ can be written in terms of $\XX^m$ as 
\begin{align*}
 E^m_t:= \int_0^t\left( \aa(t_{k(s)}, \XX^m_{t_{k(s)}})- \aa(s, \XX^m_{s})\right) \left(1+ \nabla\hat u (s,  \XX^m_{s} ) \right)\di s.
\end{align*}
\end{lemma}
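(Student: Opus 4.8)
The plan is to apply the It\^o--Tanaka machinery (heuristically, It\^o's formula for the transformation $\hat\varphi$) to the two processes $\XX$ and $\XX^m$ separately, and to track the extra terms that arise for $\XX^m$ because the drift in \eqref{eq:EuM} is frozen at the grid points $t_{k(s)}$ rather than evaluated at $(s,\XX^m_s)$.

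First I would recall that $\hat u = u^N$ solves the mild version of \eqref{eq:approxpde} with $\aa = P_{\eta_N} b^N$ smooth (bounded, Lipschitz in space, $\tfrac12$-H\"older in time), so here one can genuinely justify the use of the classical It\^o formula rather than a purely formal one: by the remark after \eqref{eq:gradcont} the solution $u^N$ can be upgraded to $\mathcal C^{1,2}([0,T)\times\R)\cap\mathcal C([0,T]\times\R)$. For $\YY_t = \hat\varphi(t,\XX_t) = \XX_t + \hat u(t,\XX_t)$, applying It\^o to $\hat u(t,\XX_t)$ and using that $\partial_t \hat u + \tfrac12\Delta \hat u + \aa\,\nabla\hat u - (\lambda+1)\hat u = -\aa$ together with $\di\XX_t = \aa(t,\XX_t)\di t + \di W_t$, the drift terms collapse: the $\aa(t,\XX_t)(1+\nabla\hat u(t,\XX_t))\di t$ contribution from $\di\XX_t + \nabla\hat u\,\di\XX_t$ exactly cancels the $-\aa$ and $\aa\nabla\hat u$ from the PDE, leaving $(1+\lambda)\hat u(t,\XX_t)\di t$. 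Writing $\XX_t = \hat\psi(t,\YY_t)$ in the surviving terms gives \eqref{eq:YY1}. This is a clean deterministic cancellation and should be presented as a short computation.

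Next I would repeat the computation for $\YY^m_t = \hat\varphi(t,\XX^m_t)$, where now $\di\XX^m_t = \aa(t_{k(t)},\XX^m_{t_{k(t)}})\di t + \di W_t$. Applying It\^o to $\hat u(t,\XX^m_t)$ gives the same PDE terms evaluated along $\XX^m$, namely $\partial_t\hat u(t,\XX^m_t) + \tfrac12\Delta\hat u(t,\XX^m_t)$, plus the stochastic-integral term $\nabla\hat u(t,\XX^m_t)\di W_t$, plus $\nabla\hat u(t,\XX^m_t)\aa(t_{k(t)},\XX^m_{t_{k(t)}})\di t$ from the finite-variation part of $\di\XX^m_t$. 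The PDE relation now reads $\partial_t\hat u + \tfrac12\Delta\hat u = (\lambda+1)\hat u - \aa(t,\XX^m_t) - \aa(t,\XX^m_t)\nabla\hat u(t,\XX^m_t)$ (evaluating the identity at the point $(t,\XX^m_t)$). Adding the $\di\XX^m_t$ contribution $\aa(t_{k(t)},\XX^m_{t_{k(t)}})\di t + \di W_t$ from the $\XX^m_t$ part of $\hat\varphi$, the terms involving $\aa$ no longer cancel exactly: instead of $-\aa(t,\XX^m_t)(1+\nabla\hat u(t,\XX^m_t)) + \aa(t_{k(t)},\XX^m_{t_{k(t)}})(1+\nabla\hat u(t,\XX^m_t))$ vanishing, it leaves precisely $\big(\aa(t_{k(t)},\XX^m_{t_{k(t)}}) - \aa(t,\XX^m_t)\big)\big(1+\nabla\hat u(t,\XX^m_t)\big)\di t$, which integrated in time is exactly the claimed error process $E^m_t$. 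Collecting the remaining $(1+\lambda)\hat u(t,\XX^m_t)\di t$ term and writing $\XX^m_t = \hat\psi(t,\YY^m_t)$ everywhere except inside $E^m$ yields \eqref{eq:YY2}.

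The only genuine subtlety — and the step I would be most careful about — is the justification of the classical It\^o formula: one must confirm that $\hat u \in \mathcal C^{1,2}$ on $[0,T)\times\R$ with enough integrability/regularity up to the boundary $t=T$, and that $\nabla\hat u$ and $\Delta\hat u$ have at most the growth needed for the stochastic integrals to be true martingales and the Lebesgue integrals to be finite. Since $\aa = P_{\eta_N}b^N$ is $\mathcal C^\infty$ in space with all derivatives bounded (being a heat-kernel mollification of a bounded function) and $\tfrac12$-H\"older in time, parabolic regularity gives $\hat u$ bounded with bounded $\nabla\hat u$ (indeed $\|\nabla\hat u\|_\infty\le\tfrac12$ by Lemma \ref{lem: grad}) and bounded $\Delta\hat u$, $\partial_t\hat u$ on compact time-subintervals; a standard localisation/limiting argument in $t\uparrow T$ then closes the gap. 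Everything else is the bookkeeping of the two cancellations described above, which is routine once the PDE \eqref{eq:approxpde} is written at the relevant space-time point.
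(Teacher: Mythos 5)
Your proposal is correct and follows essentially the same route as the paper: apply the classical It\^o formula to $\hat u(t,\XX_t)$ (respectively $\hat u(t,\XX^m_t)$), justified by upgrading the mild solution of \eqref{eq:approxpde} to a classical $\mathcal C^{1,2}$ solution thanks to the smoothness of $\aa=P_{\eta_N}b^N$, substitute the PDE, and observe that the drift cancellation is exact for $\XX$ but leaves the mismatch $\big(\aa(t_{k(s)},\XX^m_{t_{k(s)}})-\aa(s,\XX^m_s)\big)\big(1+\nabla\hat u(s,\XX^m_s)\big)\di s$ for $\XX^m$, which is precisely $E^m$ (the paper merely splits it into the two pieces $E^{1,m}$ and $E^{2,m}$ before recombining). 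Your extra care about regularity and localisation near $t=T$ is consistent with, and slightly more explicit than, the paper's treatment.
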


\begin{proof}
We start by proving \eqref{eq:YY1}. Since $\aa\in \C^1(\mathbb R)$, the unique mild solution $\hat u$ of the associated PDE \eqref{eq:approxpde} must be a classical solution, i.e., $\hat u \in \mathcal C^{1,2}([0,T)\times\mathbb{R})$. Then, applying It\^o's formula we obtain
\begin{align*}%\label{eq:virt1}
\hat u(t,\XX_t)=&\hat u(0,x)+\int_0^t\left(\partial_t \hat u+\tfrac{1}{2}\Delta \hat u+\aa\nabla \hat u\right)(s,\XX_s)\di s+\int_0^t\nabla \hat u(s,\XX_s)\di W_s\notag\\
=&\hat u(0,x)+\int_0^t\left((1+\lambda) \hat u(s,\XX_s)-\aa(s,\XX_s)\right)\di s+\int_0^t\nabla \hat u(s,\XX_s)\di W_s\notag.
\end{align*}
Plugging this into the definition of $\YY$ and using the SDE \eqref{eq: approximated sde} for $\XX$ we get
\begin{align*}
 \YY_t =& \hat \varphi(t,\XX_t)\\
 =& \hat u(t,\XX_t)+\XX_t\\
=&\hat\varphi(0,x) \!+\!(1\!+\!\lambda)\int_0^t\! \hat u(s,\hat\psi(s,\YY_s))\di s\!+\!\int_0^t\!(\nabla \hat u(s,\hat \psi(s,\YY_s))\!+\!1)\di W_s,
\end{align*}
upon recalling that $\XX_s=\hat\psi(s,\YY_s)$.

The proof of \eqref{eq:YY2} follows the same ideas but we have the additional error term $E^m_t$, due to the special drift of $\XX^m$ in \eqref{eq:EuM}. By  It\^o's formula indeed we obtain
\begin{align}\label{eq:u0}
 \hat u(t,\XX^m_t)=&\hat u(0,x)+ \int_0^t \Big[\left(\partial_t \hat u+\tfrac{1}{2}\Delta \hat u\right)(s,\XX^m_s) +  \aa(t_{k(s)},\XX^m_{t_{k(s)}})\nabla \hat u(s,\XX^m_s)\Big]\di s \\
 &+  \int_0^t\nabla \hat u(s,\XX^m_s)\di W_s.\notag
\end{align}
Using the PDE \eqref{eq:approxpde} we can substitute 
\[
\left(\partial_t \hat u+\tfrac{1}{2}\Delta \hat u\right)(s,\XX^m_s)=\big[(1+\lambda)\hat u-\aa\nabla\hat u-\aa](s,\XX^m_s)
\]
in \eqref{eq:u0} to obtain 
\begin{align}\label{etilde2}
 \hat u(t,\XX^m_t)=&\hat u(0,x)+ \int_0^t \left( (1+\lambda)\hat u(s,\XX^m_s)  - \aa(s,\XX^m_s)\right)\di s\\
 &+  \int_0^t\nabla \hat u(s,\XX^m_s)\di W_s + E^{1,m}_t,\notag
\end{align}
with  
\begin{align}\label{etilde1}
E^{1, m}_t :  =  \int_0^t \left(  \aa(t_{k(s)},\XX^m_{ t_{k(s)} }) - \aa(s,\XX^m_{s}) \right) \nabla \hat u(s,\XX^m_s)\di s.
\end{align}
Using the SDE \eqref{eq: approximated sde} for $\XX^m$ and the definition of $\YY^m$ we get
\begin{align*}
 \YY_t^m  =&\hat\varphi(0,x) \!+\!(1\!+\!\lambda)\int_0^t\! \hat u(s,\hat\psi(s,\YY^m_s))\di s\!+\!\int_0^t\!(\nabla \hat u(s,\hat \psi(s,\YY^m_s))\!+\!1)\di W_s\\
&+ E^{1, m}_t +E^{2, m}_t 
\end{align*}
where
\[
E^{2, m}_t :  =  \int_0^t \left(  \aa(t_{k(s)},\XX^m_{ t_{k(s)} }) - \aa(s,\XX^m_{s}) \right)   \di s.
\]
Hence, \eqref{eq:YY2} follows by setting $E^{m}_t= E^{1, m}_t +E^{2, m}_t $.
\end{proof}
To find a bound for \eqref{eq:X<Y} and prove the rate of convergence of the scheme, we will proceed similarly to the proof of  Proposition \ref{pr:main2}. Indeed, we will apply It\^o-Tanaka formula to $|\YY^m- \YY|$ and estimate the resulting terms. Preliminary bounds are obtained in the next two lemmas.  
\begin{lemma}\label{lem:E}
 Let $s\mapsto|\di E^m_s|$ be the infinitesimal variation of $E^m$. Then 
\begin{align}\label{eq:boundE}
\E\left[\int_0^t|\di E^m_s|\right]\le c_1(N)m^{-1}+c_2(N)m^{-\frac{1}{2}},
\end{align}
with 
\begin{align*}
&c_1(N)=6T\|a^N\|_{\infty,L^\infty} +\frac{3}{4}T^2\|\nabla a^N\|_{\infty,L^\infty}\|a^N\|_{\infty,L^\infty} \quad\text{and}\\
&c_2(N)=T^{\frac{3}{2}}\|\nabla a^N\|_{\infty,L^\infty}+ \frac{3}{2} [a^N]_{\frac12,L^\infty} T^{\frac{3}{2}}.
\end{align*}
\end{lemma}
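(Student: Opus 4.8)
The plan is to reduce the total variation of the absolutely continuous process $E^m$ to a discrete–versus–continuous increment of the smooth drift $\aa=P_{\eta_N}b^N$, and then to use the spatial Lipschitz and temporal $\tfrac12$-H\"older regularity of $\aa$ (which for each fixed $N$ comes from the heat-kernel smoothing) together with elementary moment estimates for the Euler increments of $\XX^m$.

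First, since $E^m$ has an absolutely continuous density, one has $|\di E^m_s|=\big|\aa(t_{k(s)},\XX^m_{t_{k(s)}})-\aa(s,\XX^m_s)\big|\,\big|1+\nabla\hat u(s,\XX^m_s)\big|\,\di s$. By Lemma \ref{lem: grad}, $\sup_{(t,x)}|\nabla\hat u(t,x)|\le\tfrac12$, so the second factor is at most $\tfrac32$. Hence, by Tonelli's theorem (the integrand is nonnegative),
\[
\E\Big[\int_0^t|\di E^m_s|\Big]\le\frac32\int_0^t\E\Big[\big|\aa(t_{k(s)},\XX^m_{t_{k(s)}})-\aa(s,\XX^m_s)\big|\Big]\di s .
\]

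Second, add and subtract $\aa(s,\XX^m_{t_{k(s)}})$ to split the increment into a temporal and a spatial part. For fixed $N$ the function $\aa=P_{\eta_N}b^N$ is bounded, $\tfrac12$-H\"older in time uniformly in space with seminorm $[a^N]_{\frac12,L^\infty}$, and Lipschitz in space with constant $\|\nabla a^N\|_{\infty,L^\infty}$; since $t_{k(s)}\le s<t_{k(s)}+T/m$ this gives
\[
\big|\aa(t_{k(s)},\XX^m_{t_{k(s)}})-\aa(s,\XX^m_s)\big|\le[a^N]_{\frac12,L^\infty}(s-t_{k(s)})^{1/2}+\|\nabla a^N\|_{\infty,L^\infty}\,\big|\XX^m_{t_{k(s)}}-\XX^m_s\big| .
\]
To control $\E[|\XX^m_{t_{k(s)}}-\XX^m_s|]$ I would use the Euler recursion \eqref{eq:EuM}: on the mesh interval containing $s$, $\XX^m_s-\XX^m_{t_{k(s)}}=\aa(t_{k(s)},\XX^m_{t_{k(s)}})(s-t_{k(s)})+(W_s-W_{t_{k(s)}})$, so with $|\aa|\le\|a^N\|_{\infty,L^\infty}$ and $\E[|W_s-W_{t_{k(s)}}|]\le(s-t_{k(s)})^{1/2}$,
\[
\E\big[|\XX^m_{t_{k(s)}}-\XX^m_s|\big]\le\|a^N\|_{\infty,L^\infty}(s-t_{k(s)})+(s-t_{k(s)})^{1/2} .
\]
Substituting back, integrating over $s\in[0,t]\subseteq[0,T]$ and using $\int_0^T(s-t_{k(s)})\,\di s=\tfrac{T^2}{2m}$ and $\int_0^T(s-t_{k(s)})^{1/2}\,\di s\le T^{3/2}m^{-1/2}$, one sees that the only $m^{-1}$-contribution is the drift part $\|a^N\|_{\infty,L^\infty}(s-t_{k(s)})$ of the spatial Euler increment, while its Brownian part and the temporal H\"older term each produce $m^{-1/2}$-contributions; collecting constants yields a bound of the form \eqref{eq:boundE} with $c_1(N),c_2(N)$ as stated.

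This argument is essentially routine and I do not expect a genuine obstacle. The points that need a little care are: (i) checking that $\aa=P_{\eta_N}b^N$ really is Lipschitz in space and $\tfrac12$-H\"older in time for each fixed $N$ — this is precisely where the heat-kernel mollification is used, so the argument degenerates as $\eta_N\to0$, which is why the constants necessarily depend on $N$; and (ii) integrating $s-t_{k(s)}$ precisely (rather than bounding it crudely by $T/m$) so that the $\|a^N\|_{\infty,L^\infty}$-term lands at order $m^{-1}$ and the sharp constants emerge.
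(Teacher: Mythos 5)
Your proof is correct and follows essentially the same route as the paper: bound $|1+\nabla\hat u|\le\tfrac32$ via Lemma \ref{lem: grad}, split the drift increment into a temporal $\tfrac12$-H\"older part and a spatial Lipschitz part, and control the Euler increment $\E\big[|\XX^m_s-\XX^m_{t_{k(s)}}|\big]$ through the frozen drift plus the Brownian increment. The only (harmless) differences are that you apply the decomposition uniformly on every interval, while the paper treats the first and last (partial) intervals by a crude $2\|a^N\|_{\infty,L^\infty}$ bound (which is where the $6T\|a^N\|_{\infty,L^\infty}$ term in $c_1(N)$ comes from), and that to land exactly on $T^{\frac32}\|\nabla a^N\|_{\infty,L^\infty}$ in $c_2(N)$ one needs the exact value $\int_0^T(s-t_{k(s)})^{1/2}\,\di s=\tfrac{2}{3}T^{\frac32}m^{-\frac12}$ (as the paper uses) rather than your cruder bound $T^{\frac32}m^{-\frac12}$ — otherwise you only get the same bound with a slightly larger constant, which is immaterial for how the lemma is used.
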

\begin{proof}
By definition of $E^m_t$ and using $|\nabla \hat u (s, \hat X^m_s)| \leq \tfrac12 $ from Lemma \ref{lem: grad} we have
\begin{align*}%\label{eq:Em-0}
\E&\left[\int_0^t|\di E^{m}_s|\right] \\
\leq &\frac32\E \left[ \int_0^t \left|  \aa(t_{k(s)},\XX^m_{ t_{k(s)} }) - \aa(s,\XX^m_{s}) \right|   \di s \right]\notag\\
\notag
\le& \frac32 \E \Big[ \int_{0}^{t_{1}} \left|  \aa(0,x) - \aa(s,\XX^m_{s}) \right|  \di s + \sum^{k(t)-1}_{k=1}\int_{t_k}^{t_{k+1}} \left|  \aa(t_{k},\XX^m_{ t_{k} }) - \aa(t_{k},\XX^m_{s}) \right|  \di s\Big]\\
\notag
&+ \frac32 \E \Big[ \sum^{k(t)-1}_{k=1}\int_{t_k}^{t_{k+1}} \left|  \aa(t_{k},\XX^m_{s}) - \aa(s,\XX^m_{s}) \right|  \di s+\int_{t_{k(t)}}^{t} \left|  \aa(t_{k(s)},\XX^m_{ t_{k(s)} } ) - \aa(s,\XX^m_{s}) \right|  \di s\Big].
\end{align*}
The first and last term in the final expression above are bounded by $6\frac Tm \|\aa\|_{\infty,L^\infty}$, recalling that $0\le t-t_{k(t)}\le T/m$. In the third term we recall the H\"older seminorm \eqref{eq:Hsemi} and obtain
\begin{align*}
&\E \left[ \sum^{k(t)-1}_{k=1}\int_{t_k}^{t_{k+1}} \left|  \aa(t_{k},\XX^m_{s}) - \aa(s,\XX^m_{s}) \right|  \di s\right]\le \sum^{k(t)-1}_{k=1}\int_{t_k}^{t_{k+1}}\E \left[  \left\| \aa(t_{k}) - \aa(s) \right\|_{L^\infty} \right]\di s\\
&\le [\hat a]_{\frac{1}{2},L^\infty}\sum^{k(t)-1}_{k=1}\int_{t_k}^{t_{k+1}}(s-t_k)^{\frac{1}{2}}\di s\le [\hat a]_{\frac{1}{2},L^\infty}T^{\frac{3}{2}}m^{-\frac{1}{2}},
\end{align*}
where in the final inequality we use that there are at most $m$ terms in the sum.

Finally, for the second term we have 
\begin{align*}
\E \left[ \sum^{k(t)-1}_{k=1}\int_{t_k}^{t_{k+1}} \left|  \aa(t_{k},\XX^m_{t_k}) - \aa(t_k,\XX^m_{s}) \right|  \di s\right]\le \|\nabla \aa\|_{\infty,L^\infty} \sum^{k(t)-1}_{k=1} \int_{t_k}^{t_{k+1}} \E \left[ \left| \XX^m_{ t_{k} } - \XX^m_{s} \right|\right]  \di s.
\end{align*}
Thanks to \eqref{eq:EuM}, each term in the sum above can be easily estimated as
\begin{align*}
&\int_{t_k}^{t_{k+1}} \E \left[ \left| \XX^m_{ t_{k} } - \XX^m_{s} \right|\right]  \di s\\
&=\int_{t_k}^{t_{k+1}} \E \left[ \left| \aa(t_{k},\XX^m_{ t_{k} })(s-t_k) + (W_{s}-W_{t_k}) \right|\right]  \di s\\
&\le \int_{t_k}^{t_{k+1}} \Big(\|\aa\|_{\infty,L^\infty}(s-t_k)+\E \left[ \left|(W_{s}-W_{t_k}) \right|\right] \Big) \di s\\
&= \frac{1}{2}\|\aa\|_{\infty,L^\infty}(t_{k+1}-t_k)^2+\E \left[ \left|W_{1}\right|\right]\int_{t_k}^{t_{k+1}} (s-t_k)^{\frac{1}{2}}  \di s\\
&= \frac{1}{2}\|\aa\|_{\infty,L^\infty}\Big(\frac{T}{m}\Big)^2+\frac{2}{3}\E \left[ \left|W_{1}\right|\right]\Big(\frac{T}{m}\Big)^{\frac{3}{2}}, 
\end{align*}
where again we used $t_{k+1}-t_k=T/m$. 

Combining all of the above estimates we conclude 
\begin{align*}%\label{eq:Em-0}
\E&\left[\int_0^t|\di E^{m}_s|\right]\\
\le&6\|\aa\|_{\infty,L^\infty}\frac{T}{m}+\frac{3}{2}\|\nabla \aa\|_{\infty,L^\infty}\Big(\frac{1}{2}\|\aa\|_{\infty,L^\infty} T^2 m^{-1}+\frac{2}{3}T^{\frac{3}{2}}m^{-\frac{1}{2}}\Big)+ \frac{3}{2} [\hat a]_{\frac12,L^\infty} T^{\frac{3}{2}}m^{-\frac{1}{2}}. 
\end{align*}
Rearranging terms gives \eqref{eq:boundE}.
\end{proof}

\begin{lemma}\label{lem:localtime}
The following holds  
\begin{align}\label{eq:ltYY}
\E\left[L^0_T(\YY-\YY^m)\right]\le& 
4(1+\lambda)
 \E\left[\int_0^T|\YY^m_s-\YY_s|\di s\right]+2\E\left[\int_0^T|\di E^m_s|\right] .
\end{align}
\end{lemma}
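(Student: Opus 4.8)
The plan is to apply the generic local-time estimate of Lemma \ref{lm:g} to the continuous semimartingale $\bar Y:=\YY^m-\YY$ and then to bound each of the three groups of terms in \eqref{eq:l0} by exploiting the explicit dynamics of $\YY$ and $\YY^m$ provided by Lemma \ref{lem:virt}. First I would observe that $L^0_T(\YY-\YY^m)=L^0_T(\bar Y)$, since the defining formula \eqref{eq:LT} only involves $|\bar Y_s|$ and $\langle\bar Y\rangle_s$. Subtracting \eqref{eq:YY1} from \eqref{eq:YY2} gives
\[
\di\bar Y_s=(1+\lambda)\big(\hat u(s,\hat\psi(s,\YY^m_s))-\hat u(s,\hat\psi(s,\YY_s))\big)\di s+\di E^m_s+\big(\nabla\hat u(s,\hat\psi(s,\YY^m_s))-\nabla\hat u(s,\hat\psi(s,\YY_s))\big)\di W_s,
\]
so that $\di\langle\bar Y\rangle_s=\big(\nabla\hat u(s,\hat\psi(s,\YY^m_s))-\nabla\hat u(s,\hat\psi(s,\YY_s))\big)^2\di s$. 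Since $|\nabla\hat u|\le\tfrac12$ by Lemma \ref{lem: grad} and $E^m$ has finite variation, $\bar Y$ is a genuine continuous semimartingale with bounded diffusion coefficient, so \eqref{eq:l0} may be invoked for every $\eps\in(0,1)$.

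Next I would estimate the term $-2\E[\int_0^T(\,\cdot\,)\di\bar Y_s]$ of \eqref{eq:l0}, where the weight $(\,\cdot\,)=\mathds{1}_{\{\bar Y_s\in(0,\eps)\}}+\mathds{1}_{\{\bar Y_s\ge\eps\}}e^{1-\bar Y_s/\eps}$ takes values in $[0,1]$. The contribution from the $\di W_s$ part above has zero expectation, being the integral of an adapted bounded integrand, hence a true martingale. The drift contribution splits into the $\hat u$-term and the $\di E^m$-term. For the former, $x\mapsto\hat u(s,\hat\psi(s,x))$ is $1$-Lipschitz uniformly in $s$, as the composition of the $\tfrac12$-Lipschitz map $\hat u(s,\cdot)$ and the $2$-Lipschitz map $\hat\psi(s,\cdot)$ (Lemma \ref{lem: grad}); using also that the weight is $\le1$, this term is bounded by $2(1+\lambda)\,\E[\int_0^T|\YY^m_s-\YY_s|\di s]$. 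For the latter, since the weight is $\le1$ and $E^m$ has finite variation, the term is bounded by $2\,\E[\int_0^T|\di E^m_s|]$ (which is finite by Lemma \ref{lem:E}).

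The crucial step is to show that the remaining term $\tfrac1\eps\E[\int_0^T\mathds{1}_{\{\bar Y_s>\eps\}}e^{1-\bar Y_s/\eps}\di\langle\bar Y\rangle_s]$ tends to $0$ as $\eps\to0$. Here I would use that $\nabla\hat u(s,\cdot)$ is $\gamma$-H\"older continuous uniformly in $s$ for some $\gamma>\tfrac12$: indeed $\hat u=u^N$ lies in $\mathcal C([0,T];H^{1+\delta}_p)$ for $(\delta,p)$ that can be chosen with $\delta-1/p>\tfrac12$ — precisely the choice underlying strong well-posedness of the $Y$-equations used throughout this section, cf.\ Remark \ref{rem:strong} — so that Morrey's embedding \eqref{eq:morrey} gives $u^N(s,\cdot)\in\mathcal C^{1,\gamma}$ with $\gamma=\delta-1/p$. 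Combined with the $2$-Lipschitz bound on $\hat\psi(s,\cdot)$, on $\{\bar Y_s>0\}$ one gets $\di\langle\bar Y\rangle_s\le c\,|\bar Y_s|^{2\gamma}\di s$; then, substituting $x=\bar Y_s/\eps$ and using that $\sup_{x\ge1}e^{1-x}x^{2\gamma}<\infty$, this term is bounded by $c\,T\,\eps^{2\gamma-1}\to0$ because $2\gamma-1>0$. This is the analogue of the bound on $I^{N,\eps}$ in the proof of Proposition \ref{prop:localtime}, but simpler here since we only need the term to vanish rather than a sharp rate.

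Finally, since the left-hand side of \eqref{eq:l0} is independent of $\eps$, letting $\eps\to0$ kills both the $4\eps$ term and the quadratic-variation term, leaving exactly \eqref{eq:ltYY} (with constant $2(1+\lambda)$, hence a fortiori $\le4(1+\lambda)$). The only real obstacle is the third step: the $\eps$-dependent estimate of the quadratic-variation term, which is where the H\"older exponent $\gamma>\tfrac12$ of $\nabla u^N$ — not merely its continuity — is indispensable.
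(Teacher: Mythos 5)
Your proof is correct and follows essentially the same route as the paper: apply Lemma \ref{lm:g} to $\YY^m-\YY$, use the dynamics from Lemma \ref{lem:virt}, the $\tfrac12$-Lipschitz bound on $\hat u$, the $2$-Lipschitz bound on $\hat\psi$ and the $\gamma$-H\"older continuity of $\nabla\hat u$ with $\gamma>\tfrac12$, then let $\eps\to0$. The only (harmless) deviation is your treatment of the quadratic-variation term, where the global H\"older bound together with $\sup_{x\ge1}e^{1-x}x^{2\gamma}<\infty$ yields $O(\eps^{2\gamma-1})$ directly, a slight streamlining of the paper's splitting at the threshold $\eps^\zeta$ used in the $I^{N,\eps}$ estimate; your constant $2(1+\lambda)$ is even a bit better than the stated $4(1+\lambda)$.
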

\begin{proof}
This estimate uses arguments analogous to those in the proof of Proposition \ref{prop:localtime}.
By Lemma \ref{lm:g} and using the dynamics \eqref{eq:YY1} and \eqref{eq:YY2}, and the fact that $\hat u (t,\cdot)$ and $\hat \psi(t,\cdot)$ are Lipschitz (uniformly in $t\in[0,T]$) we obtain
\begin{align}\label{eq:lt}
\E &\left[L^0_T(\YY-\YY^m)\right] \\
\le &4\eps\!+\! 4(1\!+\!\lambda) \E\left[\int_0^T\left|\YY_s\!-\!\YY^m_s\right|\di s\right]+2\E\left[\int_0^T|\di E^{m}_s|\right]\notag \\
&+ \frac1\eps \E\left[\int_0^T \mathds{1}_{\{\YY_s\!-\!\YY^m_s>\eps\}}e^{(1-(\YY_s\!-\!\YY^m_s)/\eps)}  \left| \nabla \hat u (s, \hat \psi(s, \YY_s) )  -  \nabla \hat u (s, \hat \psi(s, \YY^m_s) ) \right|^2\di s\right]\notag
\end{align}
having removed the martingale term. Notice that the last term is analogous to $I^{N, \eps}_t$ in \eqref{eq:l2} and with very similar calculations we get 
\begin{align}\label{eq:lt1}
\frac1\eps \E &\left[\int_0^T \mathds{1}_{\{\YY_s\!-\!\YY^m_s>\eps\}}e^{1-(\YY_s\!-\!\YY^m_s)/\eps}  \left| \nabla \hat u (s, \hat \psi(s, \YY_s) )  -  \nabla \hat u (s, \hat \psi(s, \YY^m_s) ) \right|^2\di s\right]\\
& \leq  \frac1\eps\left( \| \hat u\|^2_{\infty,\C^{1,\gamma}}2^{2\gamma} T\eps^{2\gamma\zeta}+T \exp(1-\eps^{\zeta-1})\right) ,\notag
\end{align}
with $1/2<\gamma<\gamma_0$ and any $\zeta\in(0,1)$ such that $2\gamma\zeta>1$. Hence the right-hand side of  \eqref{eq:lt1} tends to zero as $\eps\to 0$. Noting that also $4\eps \to 0$ in \eqref{eq:lt} concludes the proof. 
\end{proof}

We are now ready to prove Proposition \ref{thm:EM}, which we recall below for the reader's convenience.

\noindent{\bf Proposition \ref{thm:EM}.}~{\em 
Let Assumption \ref{ass: main} hold and let $b^N\in \mathcal{C}^{\frac{1}{2}}([0,T];H^0_{\tilde q_0, q_0})$ for some fixed $N$. Then, as $m\to\infty$, we have 
\begin{align}\label{eq:EMb}
\sup_{0\le t \le T}\E\left[|X^{N,m}_t\!-\!X^N_t|\right]\le C_2(N) m^{-1}+C_3(N) m^{-\frac{1}{2}} 
\end{align}
with
\begin{align}\label{eq:C23b}
\begin{split}
&C_2(N):=c\,\|P_{\eta_N} b^N\|_{\infty,L^\infty}\Big(1 +\|\nabla (P_{\eta_N} b^N)\|_{\infty,L^\infty}\Big),\\
&C_3(N):=c'\,\Big(\|\nabla (P_{\eta_N} b^N)\|_{\infty,L^\infty}+[P_{\eta_N} b^N]_{\frac12,L^\infty}\Big)
\end{split}
\end{align}
and $c,c'>0$ constants independent of $(N,m)$.
}
\begin{proof}
Since $ \hat \psi(t,\cdot) =  \psi^N(t,\cdot)$ is 2-Lipschitz we have
\begin{align}\label{eq:bb2}
\E\left[\left|\XX^m_t-\XX_t\right|\right]=\E\left[\left|\hat \psi(t,\YY^m_t)-\hat \psi(t,\YY_t)\right|\right]\le 2 \E\left[\left|\YY^m_t-\YY_t\right|\right].
\end{align}
We apply It\^o-Tanaka formula to $\big|\YY^m_t-\YY_t\big|$, using \eqref{eq:YY1} and \eqref{eq:YY2}, and removing the martingale term by taking expectation. Thus we obtain
\begin{align*}
\E&\left[\left|\YY^m_t-\YY_t\right|\right]\\
=&(1+\lambda)\E\left[\int_0^t\mathrm{sign}(\YY^m_s-\YY_s)\left( \hat u(s,\hat\psi(s,\YY^m_s))- \hat u(s,\hat\psi(s,\YY_s))\right)\di s\right]\notag\\
&+\E\left[\int_0^t \mathrm{sign}(\YY^m_s-\YY_s)\di  E^m_s\right]+\frac{1}{2}\E\left[L^0_t(\YY^m-\YY)\right]\notag\\
\le & 
(1+\lambda)\E\left[\int_0^t\left|\YY^m_s-\YY_s\right|\di s\right]+\E\left[\int_0^t\left|\di E^m_s\right|\right]+\frac{1}{2}\E\left[L^0_t(\YY^m-\YY)\right],\notag
\end{align*}
where we have also used that, $ \psi^N(t,\cdot)$ is 2-Lipschitz, uniformly in $t\in[0,T]$ and $\hat u$ is $\tfrac12$-Lipschitz. Applying Lemma \ref{lem:localtime} to the term featuring the local time we get
\begin{align*}
\E\left[\left|\YY^m_t-\YY_t\right|\right]
\le 3(1+\lambda) \E\left[\int_0^t\left|\YY^m_s-\YY_s\right|\di s\right]+2\E\left[\int_0^t\left|\di E^m_s\right|\right].
\end{align*}
Then, by Lemma \ref{lem:E} we obtain
\begin{align}
\E&\left[\left|\YY^m_t-\YY_t\right|\right]\\
\le & 3 (1+\lambda)
\E\left[\int_0^t\left|\YY^m_s-\YY_s\right|\di s\right]
+2 c_1(N) m^{-1}+2 c_2(N)m^{-\frac{1}{2}}\notag
\end{align}
with $c_1,c_2>0$ as in Lemma \ref{lem:E}. 
By Gronwall's inequality we get
\begin{align}\label{eq:bb3}
\E&\left[\left|\YY^m_t-\YY_t\right|\right]\le \tfrac{1}{2}C_2(N)  m^{-1} + \tfrac{1}{2}C_3(N) m^{-\frac{1}{2}}
\end{align}
where
\begin{align}\label{eq:c2}
C_2(N):=4c_1(N) e^{3(1+\lambda)T}\quad\text{and}\quad C_3(N):=4c_2(N) e^{3(1+\lambda)T}.
\end{align}
Then, finally plugging \eqref{eq:bb3} into \eqref{eq:bb2} we obtain \eqref{eq:EMb}.
\end{proof}

%%%%%%%%%%%%%%%%%%%%%%%%%%%%%%%%%%%%%%%%%%%%%%%%%%%%%%%%%%%%%%%%%%%

\appendix
\section{Haar and Faber basis}\label{sc: haar}
In this appendix we introduce Haar and Faber functions and discuss some of their key properties. These functions form a basis for certain fractional Sobolev spaces, which we use throughout the paper. Roughly speaking, Haar functions are `step functions' that form a basis for  $H^s_r$ with $-1/2<s<1/r$ and $2\leq r <\infty$, while Faber functions are `hat functions' (obtained by integrating Haar functions) that form a basis for $H^s_r$ with $1/2<s<1/r+1$ and  $2\leq r <\infty$. 

Using these bases it is possible to represent an element $f$ from either of those fractional Sobolev spaces in terms of infinite sums. Moreover, the sums can be cut to finite sums as a way of approximating the original function $f$. This procedure can be made rigorous thanks to the theory of fractional Sobolev spaces and to the properties of Haar and Faber function. The link between Haar and Faber representations is useful for numerical implementation and worth commenting on. Distributional drifts as those we consider in this paper can be often obtained as the (distributional) derivative of a function $g$ in $H^s_r$ with $1/2<s<1/r+1$ and  $2\leq r <\infty$. In that case, it is easy to obtain the coefficients on the Haar basis expansion by evaluating $g$ at a finite number of points (see Remark \ref{rmk: haar faber link}). Below we recall the key results and definitions that we use in this paper. 

Throughout the section we denote by $\mathcal S$ the space of Schwartz functions, and by $\mathcal S'$ its dual (the space of Schwartz distributions). Moreover we use $\mathcal D= \C_c^\infty$ to indicate $\C^\infty$-functions  with compact support and $\mathcal D'$ for its dual.
Next we introduce the Haar wavelet system on $\R$, see   \cite[equations  (2.93)--(2.96)]{Tri-bas}.
\begin{definition}[Haar wavelets on $\R$] \label{def: Haar system R}
  Let us define the mother wavelet $  x\mapsto h_M(x)$ by
 $  h_M : =  \mathds{1}_{\left[0,\frac{1}{2}\right)}-\mathds{1}_{\left[\frac{1}{2},1\right)} $. 
  The Haar wavelet  system on $\R$ is given by
\begin{equation}\label{eq: haar system R}
\{h_{j,m}:  j\in\N\cup\{-1\},\ m \in\Z\}
\end{equation}
where $ h_{-1,m}(x):=\sqrt{2}\left|h_M(x-m)\right|$  for $m\in\Z$,  
and $   h_{j,m}(x):= h_M(2^jx-m)$  for $ j\in\N$ and $m\in\Z$. 
Alternatively we can rearrange the system \eqref{eq: haar system R} as follows 
\begin{equation}\label{eq: haar system R bis}
\{h_{j,m}^k:  j\in\N\cup\{-1\},\ k \in\Z,\  m=0, \ldots, 2^j-1\}
\end{equation}
where $h_{-1,0}^k(x):= h_{-1,k}(x)$ for all $k\in\Z$, and $h_{j,m}^k(x):= h_{j,m}(x-k)$ for all $j\in\N, \ k \in\Z $ and $  m=0, \ldots, 2^j-1$.
\end{definition}

See Figure \ref{fig: haar wavelet} for the plot of a generic Haar function $h_{j,m}$. 
For future reference note that
\begin{align}\label{eq:hjm} 
 h_{j,m}(x)= \begin{cases}
 1 & \text{if } x\in [\frac m{2^j}, \frac{m+1/2}{2^j} )\\
 -1 & \text{if } x\in [\frac{ m+1/2}{2^j}, \frac{m+1}{2^j} ) \\
 0 &\text{else.}
\end{cases}
\end{align}

It turns out that the Haar wavelets system \eqref{eq: haar system R} (or equivalently \eqref{eq: haar system R bis}) is an unconditional basis for fractional Sobolev spaces on $\R$ of order ``close to zero'' (i.e.\ ${-1/2}<s<{1/r}$), as detailed in the theorem below which is taken from \cite[Theorem 3.3 and  Remark 3.4]{Tri-fab}.
\begin{theorem}\label{thm: haar R}
Let $0\leq r < \infty$, $ -\frac{1}{2}<{s}<\frac{1}{r} $, and let $f\in \mathcal S'(\R)$. Then $f\in H^{s}_r(\R)$ if and only if it can be represented as  
 \begin{equation}\label{eq: haar represent Rk} 
    f = \sum_{j=-1}^{+\infty}\sum_{k\in\Z} \sum_{m=0}^{2^j-1} \mu_{j,m}^k2^{-j\left({s}-\frac{1}{r}\right)}h_{j,m}^k,
 \end{equation} 
 with unconditional convergence in $\mathcal S'(\R)$  and locally in any space $H^{\sigma}_r(\R)$ with $\sigma<s$. Here  $\sum_{m=0}^{2^j-1}$ means $m=0$ when $j=-1$.  

The representation is unique, with the coefficients   given by
 \begin{equation*}
   \mu_{j,m}^k := 2^{j\left({s}-\frac{1}{r}+1\right)}\int_{\R}f(x)h_{j,m}^k(x) \di x,
\end{equation*} 
 where the   integral is to be understood  in the sense of  dual pairing.  Moreover the system 
\[
\left\{2^{-j\left({s}-\frac{1}{r}\right)}h_{j,m}^k:\ j\in\N\cup\{-1\},\ k\in\Z, \ m=0, \ldots, 2^j-1 \right\}
\]
is an unconditional normalised basis of $H^{s}_r(\R)$.
\end{theorem}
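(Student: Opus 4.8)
The plan is to derive this statement as a special case of the theory of Haar bases in Triebel--Lizorkin spaces, following \cite{Tri-fab}. The first step is the classical identification of the Bessel-potential space with a Triebel--Lizorkin space: for $1<r<\infty$ one has $H^s_r(\R)=F^s_{r,2}(\R)$ with equivalent norms (Littlewood--Paley theory), so it suffices to prove the statement with $H^s_r$ replaced by $F^s_{r,2}$. The second step is to invoke the general Haar-basis theorem for the $F$-scale: whenever the parameters satisfy a restriction of the form $\max\{\tfrac1r-1,\,\tfrac12-1\}<s<\tfrac1r$, the renormalised Haar system $\{2^{-j(s-1/r)}h^k_{j,m}\}$ is an unconditional basis of $F^s_{r,2}(\R)$, and the synthesis map $\boldsymbol\mu\mapsto\sum\mu^k_{j,m}2^{-j(s-1/r)}h^k_{j,m}$ is an isomorphism onto $F^s_{r,2}(\R)$ from the associated sequence space $f^s_{r,2}$. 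In the range relevant here, namely $r\ge 2$, one has $\tfrac12-1=-\tfrac12\ge\tfrac1r-1$, so the parameter window collapses to exactly $-\tfrac12<s<\tfrac1r$, which is the hypothesis of the theorem.

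The coefficients are then recovered from the biorthogonal dual system: the coefficient of $h^k_{j,m}$ in the expansion of $f\in F^s_{r,2}(\R)$ equals $2^{j(s-1/r+1)}\langle f,h^k_{j,m}\rangle$, the pairing being the $\mathcal S'$--$\mathcal S$ duality (equivalently $\mathcal D'$--$\mathcal D$, since $h^k_{j,m}\in L^\infty$ is compactly supported and $F^s_{r,2}(\R)\subset\mathcal S'(\R)$). Uniqueness of the representation, and the explicit formula for $\mu^k_{j,m}$, follow because the analysis map is a left inverse of the synthesis map on $f^s_{r,2}$. Unconditional convergence in $\mathcal S'(\R)$ is then immediate from the isomorphism together with $F^s_{r,2}(\R)\hookrightarrow\mathcal S'(\R)$, and convergence locally in $H^\sigma_r(\R)$ for $\sigma<s$ follows from the elementary monotonicity embedding $f^s_{r,2}\hookrightarrow f^\sigma_{r,2}$ applied to the tails of the series, which are supported on finitely many space translations. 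The convention that $\sum_{m=0}^{2^j-1}$ reduces to a single term for $j=-1$ simply reflects that the coarsest scale contributes the father-wavelet pieces $h_{-1,k}=\sqrt2\,\mathds{1}_{[k,k+1)}$, which carry no cancellation.

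The technical core, and the main obstacle, is the boundedness of the analysis operator $f\mapsto\{2^{j(s-1/r+1)}\langle f,h^k_{j,m}\rangle\}$ from $H^s_r(\R)$ into $f^s_{r,2}$. Haar functions are not smooth, so they cannot be treated as classical analysing wavelets; the device is to view them as finite differences of characteristic functions, which lets one fit them into the local-means / sub-atomic characterisation of the $F$-spaces. The upper endpoint $s<\tfrac1r$ keeps $H^s_r$ below the regularity threshold at which pairing against discontinuous box functions would break down, while the lower endpoint $s>-\tfrac12$ (the value $\tfrac1q-1$ for the second index $q=2$) is exactly what makes the single vanishing moment of $h_M$ suffice for the high-frequency part. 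Verifying these two endpoint conditions, together with the vector-valued Fefferman--Stein maximal inequality needed to pass to the $\ell^2$-square function defining $f^s_{r,2}$, is the substantive part of the argument; since all of this is carried out in full generality in \cite{Tri-fab}, in the paper we simply quote the relevant statements there.
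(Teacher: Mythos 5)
Your proposal is correct and follows essentially the same route as the paper, which does not prove this statement but quotes it directly from \cite[Theorem 3.3 and Remark 3.4]{Tri-fab}; your identification $H^s_r=F^s_{r,2}$, the parameter window collapsing to $-\tfrac12<s<\tfrac1r$ for $r\ge 2$ (with $q=2$), and the recovery of the coefficients by dual pairing are exactly the ingredients behind the cited result. The only difference is that you sketch the underlying Triebel--Lizorkin machinery (local means, Fefferman--Stein maximal inequality) before deferring to \cite{Tri-fab}, which the paper omits entirely.
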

It is shown in \cite[Theorem 2.9 and  Remark 2.12]{Tri-bas} that  \eqref{eq: haar represent Rk} can be  equivalently written as 
\begin{align}\label{eq:muh}
 f = \sum_{j=-1}^{+\infty}\sum_{m\in\Z} \mu_{j,m}2^{-j\left({s}-\frac{1}{r}\right)}h_{j,m},
 \end{align}
where $\mu_{j,m}=\mu^0_{j,m}$ and $h_{j,m}$ are as in \eqref{eq:hjm}.

\begin{remark}\label{rm: haar coeff}
Let us denote 
\begin{equation}\label{eq: haar coeff tilde}
\tilde{\mu}_{j,m}^k  := 2^{j}\int_{\R}f(x)h_{j,m}^k(x)\ \di x = 2^{j\left({-s}+\frac{1}{r}\right)}\mu_{j,m}^k.
\end{equation}
It follows from Theorem \ref{thm: haar R} and \eqref{eq:muh} that $\tilde \mu_{j,m}^0 = \tilde \mu_{j,m}$, where $\tilde\mu_{j,m}$ is defined as in \eqref{eq: haar coeff tilde} but with $h_{j,m}^k $ replaced by $h_{j,m}$. 

Moreover, from \eqref{eq: haar represent Rk} we get the more compact representation
\begin{equation}\label{eq: haar repres k}
f = \sum_{j=-1}^{+\infty}\sum_{k\in\Z}\sum_{m=0}^{2^j-1}\tilde{\mu}_{j,m}^k h_{j,m}^k.
\end{equation}
\end{remark}

\begin{remark}\label{rm: haar coeff pq}
The coefficients  $\tilde{\mu}_{j,m}^k$ do not actually depend on  $r$ or $s$. More precisely,   if $f\in H^s_r(\R)\cap H^{s'}_{r'}(\R)$ for some $r\neq r'$ and $s\neq s'$ then the representation \eqref{eq: haar repres k} is exactly the same in both spaces, with the same coefficients.
\end{remark}

In what follows we analyse fractional Sobolev spaces on an open bounded interval $I\subset\R$.  Let us recall \cite[Definition 1.24 (i)]{Tri-bas}: let $I$ be an open set in $\R$, then  
\begin{align}\label{eq:distr}
 H^s_ {r} (I): = \{ f \in \mathcal D'(I) : f = h_{\vert I} \text{ for some }h\in H^s_ {r}(\R) \}
\end{align}
endowed with the norm 
\[
\|f \|_{ H^s_ {r} (I)} = \inf\{ \|h\|_{ H^s_ {r} (\R)};\,h\in  H^s_ {r} (\R)\:\text{with}\: f = h_{\vert I}  \}.
\] 

With no loss of generality we specialise to $I:=(0,1)$ for simplicity of exposition. Next we introduce the Haar wavelet system on $I$ (see \cite[equations (2.128) and (2.129)]{Tri-bas} for details) which is useful for the fractional Sobolev space $H^s_ {r}(I)$. 
\begin{definition}[Haar wavelets on $I=(0,1)$] \label{def: Haar system I}
The Haar wavelet  system on $I$ is given by
\begin{equation}\label{eq: haar system I}
\{h_0, h_{j,m}:  j\in\N ,\ m =0, \ldots, 2^j-1\}
\end{equation}
where
\[ 
h_{0}(x):= \mathds{1}_I (x) 
\]
and $h_{j,m}$ is as in \eqref{eq:hjm}.
\end{definition}
Notice that the system \eqref{eq: haar system I} is essentially the restriction of \eqref{eq: haar system R} to the interval $I$. In particular, $h_{j,m}$ is now restricted to values of $m$ between $0$ and $2^j-1$ rather than $m\in\mathbb{Z}$ as in Definition \ref{def: Haar system R}. Moreover, the set of elements $h_{-1, m}$, defined on $\R$, with $m\in\Z$, has been replaced by $h_0$, defined on $I$. For the fractional Sobolev spaces on $H^s_ {r}(I)$ we have again a representation in terms of Haar functions, as illustrated below. For more details see \cite[Theorem 2.13]{Tri-bas}.

\begin{theorem}\label{thm: haar I}
Let $2\leq  {r} < \infty$, $ -\frac{1}{2}<{s}<\frac{1} {r} $, and let $f\in \mathcal D'(I)$. Then  $f\in H^{s}_ {r}(I) $ if and only if it can be represented as  
 \begin{equation}\label{eq: haar represent Ia}
    f = \mu_0 h_0 + \sum_{j=0}^{+\infty}\sum_{m=0}^{2^j-1}\mu_{j,m}2^{-j\left({s}-\frac{1} {r}\right)}h_{j,m} ,
 \end{equation} 
with unconditional convergence in  any space $H^{\sigma}_ {r}(I)$ with $\sigma<s$.
The representation is unique, with the coefficients   given by 
\begin{equation}\label{eq: mu0}
\mu_0  := \int_{I}f(x)h_{0}(x) \di x
\end{equation} 
and, for $j\in\N$ and $m =0, \ldots, 2^j-1 $, by
 \begin{equation*} 
      \mu_{j,m} := 2^{j\left({s}-\frac{1} {r}+1\right)}\int_{I}f(x)h_{j,m}(x) \di x,
    \end{equation*} 
 where the   integrals are to be understood  in the sense of  dual pairing. Moreover the system 
\[
\left\{ h_0, 2^{-j\left({s}-\frac{1} {r}\right)}h_{j,m}:\ j\in\N ,\ m =0, \ldots, 2^j-1 \right\}
\]
is an unconditional normalised basis of  $H^{s}_{r}(I)$.
\end{theorem}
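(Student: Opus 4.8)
The plan is to derive Theorem \ref{thm: haar I} from its whole-line counterpart, Theorem \ref{thm: haar R}, using the zero-extension/restriction pair between $H^\sigma_r(\R)$ and $H^\sigma_r(I)$. First I would record two elementary facts: by \eqref{eq:distr} the restriction map $R\colon H^\sigma_r(\R)\to H^\sigma_r(I)$, $h\mapsto h_{|I}$, has norm at most $1$ for every $\sigma\in\R$; and, since $r\ge2$ forces $-1+\tfrac1r\le-\tfrac12$, the whole admissible range $-\tfrac12<s<\tfrac1r$ lies inside the interval $(-1+\tfrac1r,\tfrac1r)$ on which extension by zero $E_0\colon H^s_r(I)\to H^s_r(\R)$ is bounded and satisfies $R\,E_0=\mathrm{id}$. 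I would also isolate the only combinatorial input needed: for $j\ge0$ the dyadic intervals $[2^{-j}m,2^{-j}(m+1))$ are nested inside the unit integer intervals, so a level-$j$ Haar function on $\R$ is either supported inside $(0,1)$ or essentially disjoint from it — no Haar function straddles $\partial I$.

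For \emph{necessity}, I would take $f\in H^s_r(I)$, pick an extension $h\in H^s_r(\R)$ with $Rh=f$, expand $h$ via Theorem \ref{thm: haar R} in the translated system \eqref{eq: haar system R bis}, and then restrict that expansion to $I$. By the nesting remark, for $j\ge0$ the term $h_{j,m}^k$ restricts to $h_{j,m}$ when $k=0$, $0\le m\le2^j-1$, and to $0$ otherwise, while for $j=-1$ the term $h_{-1,0}^k=\sqrt2\,\mathds{1}_{[k,k+1)}$ restricts to $\sqrt2\,h_0$ when $k=0$ and to $0$ otherwise. Unconditional convergence of the $\R$-expansion justifies discarding the vanishing terms and regrouping, and boundedness of $R$ on $H^\sigma_r$ (for $\sigma<s$) turns the local convergence on $\R$ into convergence in $H^\sigma_r(I)$; what survives is exactly \eqref{eq: haar represent Ia}. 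Because each surviving Haar function is supported in $I$, one has $\langle h,h_{j,m}\rangle=\langle f,h_{j,m}\rangle$, so after collecting the normalising powers of $2$ the coefficients come out to be precisely $\mu_0=\int_I f\,h_0$ and $\mu_{j,m}=2^{j(s-1/r+1)}\int_I f\,h_{j,m}$; in particular they do not depend on the chosen extension.

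For \emph{sufficiency}, \emph{uniqueness} and the \emph{basis property} I would go the other way: given $f$ represented as in \eqref{eq: haar represent Ia} with admissible coefficient sequence $\boldsymbol\mu=(\mu_0,(\mu_{j,m}))$, I would extend every basis function by zero to $\R$; since each is already supported in $\overline I$, this turns $h_0$ into $\mathds{1}_{(0,1)}=\tfrac1{\sqrt2}h_{-1,0}^0$ and $h_{j,m}$ into $h_{j,m}^0$, so the series becomes the special case of \eqref{eq: haar represent Rk} with all coefficients indexed by $k\ne0$ equal to zero, whose coefficient sequence has, up to a constant, the same sequence-space norm as $\boldsymbol\mu$. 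By the ``if'' part of Theorem \ref{thm: haar R} its sum $g$ lies in $H^s_r(\R)$, hence $f=Rg\in H^s_r(I)$ and $g=E_0 f$; combining $\|f\|_{H^s_r(I)}\le\|g\|_{H^s_r(\R)}$ with $\|E_0 f\|_{H^s_r(\R)}\le c\,\|f\|_{H^s_r(I)}$ and the norm equivalence on $\R$ yields $\|f\|_{H^s_r(I)}\sim\|\boldsymbol\mu\|$ and the unconditional basis property. Uniqueness is inherited: two representations of $f$ on $I$ would, after extension by zero, produce two expansions of $g$ as in \eqref{eq: haar represent Rk}, contradicting uniqueness on $\R$.

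The part I expect to be most delicate is the bookkeeping in the necessity step: verifying that restriction really annihilates every Haar function not supported in $I$ (the dyadic-nesting argument excluding straddling of $\partial I$), that the entire level-$(-1)$ family $\{h_{-1,k}\}_{k\in\Z}$ collapses onto the single function $h_0$, and that the leftover normalisation constants reassemble exactly into the stated dual-pairing formulas — and, crucially, that these coefficients are independent of the extension $h$, which holds only because the surviving Haar functions live inside $I$. A secondary point requiring care is the admissibility of extension by zero on the full order range $(-\tfrac12,\tfrac1r)$, which is exactly where the hypothesis $r\ge2$ enters.
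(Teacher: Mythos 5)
The paper does not actually prove this statement: it is quoted directly from \cite[Theorem 2.13]{Tri-bas}, just as Theorem \ref{thm: haar R} is quoted from the literature, so your argument is necessarily a different route — namely, a genuine derivation of the interval case from the whole-line case via the restriction/zero-extension pair. This reduction is sound, and your bookkeeping in the necessity step (all translated Haar functions with $k\neq 0$ restrict to zero on $I$, the level $j=-1$ family collapses onto $h_0$, and the normalising constants reassemble into \eqref{eq: mu0}) is precisely the computation the paper itself carries out, in the opposite direction, in the proof of Lemma \ref{lm: haar I R}. What your route buys is an actual proof, at the price of two imported facts not contained in the paper but standard: that restriction is an isomorphism from $\{g\in H^s_r(\R):\mathrm{supp}\,g\subset\overline I\}$ onto $H^s_r(I)$ for $\tfrac1r-1<s<\tfrac1r$ (this is indeed where $r\ge 2$ enters, as you observe), and that $H^s_r(\R)$ contains no nonzero distribution supported in the two boundary points in this range; the latter is what really underwrites both the extension-independence of the coefficients and your identification $g=E_0f$ in the sufficiency and uniqueness steps. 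Two small caveats: the boundary-adjacent Haar functions $h_{j,0}$ and $h_{j,2^j-1}$ are supported in $\overline I$ and touch $\partial I$, not strictly ``inside $(0,1)$'', so the independence of $\langle h,h_{j,m}\rangle$ from the chosen extension $h$ follows not from disjointness of supports but from the no-point-mass fact together with a translation/mollification argument; and the ``if'' direction genuinely requires the sequence-space ($f^-_{r,2}$-type) formulation of Theorem \ref{thm: haar R}, which the paper states only loosely — your phrase ``admissible coefficient sequence'' covers this, but that is the point where one must lean on Triebel's precise statement rather than on the paper's wording. Neither caveat breaks the argument.
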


Notice that \eqref{eq: haar represent Ia} can be written in terms of $\tilde   \mu_{j,m}$ (see Remark \ref{rm: haar coeff}) as
  \begin{equation}\label{eq: haar represent I}
  f = \mu_0 h_0 + \sum_{j=0}^{+\infty}\sum_{m=0}^{2^j-1}\tilde \mu_{j,m} h_{j,m} .
\end{equation}
Of course a distribution $f$ defined on $I$ can be seen as a distribution defined on $\mathbb{R}$ but only supported on $I$ (in the sense that $f(\phi)=0$ for all $\phi\in \mathcal D$ supported on $\R\setminus I$). The link between the series representations on $I$ and on $\R$ is given in the next lemma.
\begin{lemma}\label{lm: haar I R}
If $f\in H^s_{r}(\R)$ and supp$(f)\subset I$ then its representation on $\R$ given by \eqref{eq: haar repres k} (or equivalently by \eqref{eq: haar represent Rk}) coincides with its representation on $I$ given by  \eqref{eq: haar represent I}.
\end{lemma}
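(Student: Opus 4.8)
The plan is to start from the unconditional Haar representation of $f$ on $\R$ furnished by Theorem \ref{thm: haar R}, written in the compact form \eqref{eq: haar repres k},
\[
f = \sum_{j=-1}^{+\infty}\sum_{k\in\Z}\sum_{m=0}^{2^j-1}\tilde\mu_{j,m}^k\, h_{j,m}^k,\qquad \tilde\mu_{j,m}^k = 2^j\langle f,h_{j,m}^k\rangle,
\]
and to show that the support hypothesis forces all coefficients with $k\neq0$ to vanish, leaving a series that coincides term by term with the $I$-representation \eqref{eq: haar represent I}.

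First I would localise the supports. For $j\in\N$ and $m\in\{0,\ldots,2^j-1\}$ the function $h_{j,m}$ is supported in $[m2^{-j},(m+1)2^{-j})\subseteq[0,1)$ by \eqref{eq:hjm}, hence $h_{j,m}^k(\,\cdot\,)=h_{j,m}(\,\cdot\,-k)$ is supported in $[k,k+1)$; likewise $h_{-1,0}^k=h_{-1,k}$ is supported in $[k,k+1)$, since $h_{-1,k}(x)=\sqrt2\,|h_M(x-k)|$ and $|h_M|=\mathds{1}_{[0,1)}$. Thus $\mathrm{supp}(h_{j,m}^k)\cap I=\emptyset$ whenever $k\neq0$, and because $\mathrm{supp}(f)\subset I$ the dual pairing $\langle f,h_{j,m}^k\rangle$ vanishes for every $k\neq0$; consequently $\tilde\mu_{j,m}^k=0$ for $k\neq0$. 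The series above therefore collapses to
\[
f = \tilde\mu_{-1,0}^0\, h_{-1,0} + \sum_{j=0}^{+\infty}\sum_{m=0}^{2^j-1}\tilde\mu_{j,m}^0\, h_{j,m},
\]
with the convergence in $H^\sigma_r(I)$ for $\sigma<s$ inherited from the local convergence in $H^\sigma_r(\R)$ of the original expansion.

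It then remains to match this collapsed series with \eqref{eq: haar represent I}. For the terms with $j\in\N$, the relevant $h_{j,m}$ is supported in $[0,1)$ and $\mathrm{supp}(f)\subset I$, so the pairing $\langle f,h_{j,m}\rangle$ computed over $\R$ agrees with the one computed over $I$; hence $\tilde\mu_{j,m}^0=2^j\langle f,h_{j,m}\rangle=\tilde\mu_{j,m}$ as in \eqref{eq: haar coeff tilde}. For the exceptional level $j=-1$, one uses $h_{-1,0}=\sqrt2\,|h_M|=\sqrt2\,\mathds{1}_{[0,1)}=\sqrt2\,h_0$ as elements of $H^s_r$, whence $\tilde\mu_{-1,0}^0\,h_{-1,0} = \tfrac12\langle f,\sqrt2\,h_0\rangle\cdot\sqrt2\,h_0 = \langle f,h_0\rangle\,h_0 = \mu_0 h_0$, with $\mu_0$ as in \eqref{eq: mu0}. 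Putting these identifications together, the collapsed $\R$-series is exactly the $I$-series \eqref{eq: haar represent I}, and (invoking Remark \ref{rm: haar coeff pq}, so that no ambiguity arises from the choice of $(s,r)$) the claim follows. The argument is essentially bookkeeping, and the only point requiring care is the exceptional level $j=-1$: on $\R$ it contributes the single normalised wavelet $h_{-1,0}$, which represents the constant $h_0=\mathds{1}_I$ on $I$ only up to the factor $\sqrt2$, so one must track this normalisation through the definitions of $\tilde\mu_{-1,0}^0$ and $\mu_0$ to see that the two contributions are literally equal.
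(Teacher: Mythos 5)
Your proposal is correct and follows essentially the same route as the paper: the support of $h_{j,m}^k$ in $[k,k+1)$ kills all coefficients with $k\neq 0$, and the exceptional level $j=-1$ is reconciled with the $\mu_0 h_0$ term by tracking the factor $\sqrt2$ through $\tilde\mu_{-1,0}=2^{-1}\langle f,\sqrt2\,\mathds{1}_{(0,1)}\rangle$, exactly as in the paper's computation.
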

\begin{proof}
First we remark that in this case the restriction of $f$ to $I$ (denoted again by $f$) belongs to $  H^s_ {r}(I)$ by definition of the latter space. 
Since $\text{supp} (f)\subset I$,  it   follows that $\tilde \mu_{j,m}^k =0 $ for all $k\neq 0$ because the functions $h_{j,m}^k$ are supported on $(k, k+1)$  while $f$ is supported on $(0,1)$ which implies that the dual pairing between $h_{j,m}^k$ and $f$ is non-zero only if $k=0$. Hence the Haar representation \eqref{eq: haar repres k}   becomes 
\begin{align*} \nonumber
f&= \sum_{j=-1}^{+\infty} \sum_{k=0} \sum_{m=0}^{2^j-1} \tilde   \mu_{j,m}^k  h_{j,m}^k  \\ \nonumber
&= \sum_{j=-1}^{+\infty}  \sum_{m=0}^{2^j-1} \tilde   \mu_{j,m}   h_{j,m}\\
&= \tilde  \mu_{-1,0}   h_{-1
,0}   +  \sum_{j=0}^{+\infty}  \sum_{m=0}^{2^j-1} \tilde   \mu_{j,m}   h_{j,m},
\end{align*}
where we used the fact that $\tilde \mu_{j,m}^0 =\tilde \mu_{j,m}  $. The proof can be concluded by noticing that 
\begin{align*}
\tilde\mu_{-1,0} h_{-1,0} &=  2^{-1} \int_\R f(x) h_{-1,0}(x) \di x   h_{-1,0}  \\
&= 2^{-1} \int_\R f(x) \sqrt 2 \mathds 1_{(0,1)}(x) \di x \sqrt 2  \mathds 1_{(0,1)}\\
 &= \int_I f(x) \mathds 1_{(0,1)} (x)\di x \mathds 1_{(0,1)} = \mu_{0}   h_{0} . \qedhere
\end{align*}
\end{proof}

Next we recall the definition of Faber functions. They are denoted by $v_{jm}$ and are hat-functions, defined as the normalised integrals of the Haar functions $h_{jm}$ on $I$.  Notice that the Faber series representation holds in general only on bounded domains in $\R$. Here we only recall their definition on the unit interval $I=(0,1)$. More details can be found in \cite[Section 3.2.1]{Tri-fab}.

\begin{definition}[Faber basis on $I$]\label{def: Faber system I}
The Faber system on $(0,1)$ is given by 
\begin{equation*}
\left \{ v_0, v_1, v_{j,m} : j\in \N  , m=0, \ldots, 2^j-1 \right \}
\end{equation*}
where
\begin{align*}
& v_{0} (x) : = 1-x\qquad\text{and}\qquad v_{1} ( x): =  x
\end{align*}
for $0\leq x\leq 1$ (and zero outside $I$), and the hat-functions are defined as 
\[
v_{j,m}(x) = 2^{j+1}\int_0^x h_{j,m}(y) \di y,
\]
that is
\[ 
 v_{j,m}(x):= \begin{cases}
 2^{j+1}(x-2^{-j}m) & \text{if } x\in [\frac m{2^j}, \frac{m+1/2}{2^j} )\\
 2^{j+1}(2^{-j}(m+1) - x) & \text{if } x\in [\frac{ m+1/2}{2^j}, \frac{m+1}{2^j} ) \\
 0 &\text{else.}
\end{cases}
\]  
\end{definition} 
Using the Faber system on $I$ it is possible to represent elements of fractional Sobolev spaces on domain $I$ for $1/2 < s< 1+1/ {r}$ and $2\leq  {r}<\infty$ as we see below. For a proof see \cite[Theorem 3.1 and Corollary 3.3]{Tri-bas}. 
\begin{theorem}\label{thm: faber I}
    Let $g\in H^{s}_ {r}(I)$ for $2\leq  {r} < \infty$, and $\frac{1}{2}<s< 1+\frac{1} {r}$. Then we have the unique Faber representation for $g$ 
    \begin{align*}
    g &= \bar \mu_0v_0 +\bar \mu_1v_1 +\sum_{j=0}^{+\infty}\sum_{m=0}^{2^j-1}\bar \mu_{j,m}  v_{j,m}
      \end{align*}
    with unconditional convergence in $\mathcal{C}(I)$ and in $H^\sigma_ {r} (I)$ with $\sigma<s$.
     Here the coefficients $\bar \mu$ are explicitly given by 
    \begin{equation}\label{eq: faber coeff}
    \begin{cases}
   \bar  \mu_{j,m} &=-\frac12 \left(\Delta^2_{2^{-j-1}}g\right)(2^{-j}m)\\
     \bar   \mu_{0} &= g(0)\\
     \bar   \mu_{1} &= g(1)\\
    \end{cases}
    \end{equation}
and where   $(\Delta^2_{h}g)(x) := g(x+2h)-{2}g(x+h)+ g(x)$.
\end{theorem}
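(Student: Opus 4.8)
The plan is to prove Theorem~\ref{thm: faber I} by the classical Faber dyadic‑interpolation argument and to upgrade convergence from the uniform norm to the fractional Sobolev scale by differentiating and invoking the Haar‑basis result of Theorem~\ref{thm: haar I}. First I would check that the pointwise values entering \eqref{eq: faber coeff} make sense: since $r\ge 2$ we have $1/r\le\tfrac12<s$, so $\alpha:=s-1/r>0$, and the fractional Morrey embedding \eqref{eq:morrey} (applied on $\R$ and restricted to $I$) gives $H^s_r(I)\subset\C^{0,\alpha}(\bar I)\subset\C(\bar I)$. Hence $g$ is (Hölder) continuous on $\bar I$ and $g(0),g(1),g(2^{-j}m),g(2^{-j-1}(2m+1))$ — and therefore $\bar\mu_0,\bar\mu_1,\bar\mu_{j,m}$ — are well defined.

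Next I would set up the telescoping. For $J\ge0$ let $P_Jg$ denote the continuous function that is affine on each dyadic interval $[2^{-J}m,2^{-J}(m+1)]$, $m=0,\dots,2^J-1$, and agrees with $g$ at the nodes $2^{-J}k$, $k=0,\dots,2^J$. A direct computation gives $P_0g=g(0)v_0+g(1)v_1$, and, since $P_{J+1}g-P_Jg$ vanishes at every node of the level‑$J$ mesh and at the new midpoint $2^{-J-1}(2m+1)$ equals $g$ there minus the affine‑interpolant value $\tfrac12(g(2^{-J}m)+g(2^{-J}(m+1)))$,
\begin{align*}
P_{J+1}g-P_Jg&=\sum_{m=0}^{2^J-1}\Bigl(g\bigl(2^{-J-1}(2m+1)\bigr)-\tfrac12 g(2^{-J}m)-\tfrac12 g(2^{-J}(m+1))\Bigr)v_{J,m}\\
&=-\tfrac12\sum_{m=0}^{2^J-1}\bigl(\Delta^2_{2^{-J-1}}g\bigr)(2^{-J}m)\,v_{J,m}.
\end{align*}
Telescoping $g=P_0g+\sum_{J\ge0}(P_{J+1}g-P_Jg)$ then produces, at least formally, the Faber series of the statement with exactly the coefficients \eqref{eq: faber coeff}, so it only remains to justify convergence and uniqueness.

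Convergence in $\C(\bar I)$ is easy: $\|g-P_Jg\|_{L^\infty}$ is bounded by the modulus of continuity of $g$ at scale $2^{-J}$, so $P_Jg\to g$ uniformly. For convergence in $H^\sigma_r(I)$, $\sigma<s$, the idea is to pass to the Haar basis. From $v_{j,m}(x)=2^{j+1}\int_0^x h_{j,m}(y)\,\di y$ one gets $v_{j,m}'=2^{j+1}h_{j,m}$, while $v_0'=-\mathds{1}_I=-h_0$ and $v_1'=\mathds{1}_I=h_0$, so differentiating the Faber series termwise yields $g'=(\bar\mu_1-\bar\mu_0)h_0+\sum_j\sum_m 2^{j+1}\bar\mu_{j,m}h_{j,m}$; by the fundamental theorem of calculus (valid as $g\in\C(\bar I)$ is the primitive of $g'\in H^{s-1}_r(I)\subset L^1(I)$) one checks $2^{j+1}\bar\mu_{j,m}=2^j\int_I g'h_{j,m}\,\di x$ and $\bar\mu_1-\bar\mu_0=\int_I g'\,\di x$, i.e.\ these are precisely the Haar coefficients of $g'$ in the sense of Theorem~\ref{thm: haar I}. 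Since $\tfrac12<s<1+\tfrac1r$ forces $g'\in H^{s-1}_r(I)$ with $-\tfrac12<s-1<\tfrac1r$, Theorem~\ref{thm: haar I} applies to $g'$ and says this Haar series converges unconditionally to $g'$ in every $H^{\tau}_r(I)$ with $\tau<s-1$. Since the partial sums of the Faber series are exactly the interpolants $P_{J+1}g$, which satisfy $(g-P_{J+1}g)(0)=(g-P_{J+1}g)(1)=0$, a Poincaré‑type estimate — equivalently, boundedness of the "integrate from $0$" map $H^{\tau}_r(I)\to H^{\tau+1}_r(I)$ — gives $\|g-P_{J+1}g\|_{H^{\tau+1}_r(I)}\le c\|(g-P_{J+1}g)'\|_{H^{\tau}_r(I)}\to0$, and since $H^{\tau+1}_r(I)\subset H^\sigma_r(I)$ whenever $\sigma\le\tau+1<s$ this yields (unconditional) convergence in $H^\sigma_r(I)$ for every $\sigma<s$. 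Uniqueness follows by evaluation: a convergent Faber series takes at $x=0$ the value of its $v_0$‑coefficient, at $x=1$ the value of its $v_1$‑coefficient, and then, level by level at the dyadic midpoints, the values of its $v_{j,m}$‑coefficients, so the coefficients are determined by $g$.

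The part I expect to be the main obstacle is the boundary bookkeeping in this last step. On a bounded interval $\di/\di x$ is not an isomorphism $H^s_r(I)\to H^{s-1}_r(I)$ — primitives are determined only up to an affine function — so one must pin the primitive via the pointwise traces $g(0),g(1)$ (exactly what the two affine Faber functions $v_0,v_1$ encode) and verify that the resulting antiderivative map is bounded into $H^{\tau+1}_r(I)$ and compatible with the restriction‑from‑$\R$ definition \eqref{eq:distr} of the interval spaces. This is the point where the sharp interval machinery of \cite{Tri-bas,Tri-fab} is genuinely needed: either the equivalence of $\|g\|_{H^s_r(I)}$ with the sequence norm $\bigl(|\bar\mu_0|^r+|\bar\mu_1|^r+\sum_j 2^{j(sr-1)}\sum_m|\bar\mu_{j,m}|^r\bigr)^{1/r}$, or, equivalently, a Jackson estimate $\|g-P_Jg\|_{H^\sigma_r(I)}\le c\,2^{-J(s-\sigma)}\|g\|_{H^s_r(I)}$ together with a uniform bound $\sup_J\|P_J\|_{H^s_r(I)\to H^s_r(I)}<\infty$. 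Everything in the interior is elementary; it is this fractional‑scale approximation/boundedness statement that carries the real content of the theorem.
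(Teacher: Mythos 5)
The paper does not actually prove this statement: it is quoted verbatim from Triebel, with the proof deferred to \cite[Theorem 3.1 and Corollary 3.3]{Tri-bas}, so there is no in-paper argument to compare against line by line. Your reconstruction follows essentially the same route as the cited proof (as the paper's Remark \ref{rmk: haar faber link} indicates, Triebel's argument also passes through the Haar expansion of $g'$): the dyadic-interpolation telescoping giving exactly the second-difference coefficients, uniform convergence via the modulus of continuity (legitimate since $s>\tfrac12\ge\tfrac1r$ gives $H^s_r(I)\subset\mathcal{C}^{0,\alpha}$ by Morrey), identification of $2^{j+1}\bar\mu_{j,m}$ with the Haar coefficients of $g'\in H^{s-1}_r(I)$, and transfer of the unconditional $H^\tau_r$-convergence of Theorem \ref{thm: haar I} back to $H^{\tau+1}_r(I)$ through the primitive map pinned at the endpoints. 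That is the correct skeleton, and your honest identification of the antiderivative boundedness (equivalently the sequence-norm characterisation or Jackson estimate on the interval scale) as the step carrying the real content is accurate; leaving that to \cite{Tri-bas,Tri-fab} is consistent with how the paper itself handles the theorem.

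Two caveats. First, your justification of the coefficient identification via ``$g'\in H^{s-1}_r(I)\subset L^1(I)$'' is false whenever $s<1$ (then $s-1<0$ and $H^{s-1}_r$ contains genuine distributions; indeed in the paper's application $g=B^H$ has $s<1$ and $g'=b$ is not a function). The identity $2^{j}\langle g',h_{j,m}\rangle=-2^{j}\bigl(\Delta^2_{2^{-j-1}}g\bigr)(2^{-j}m)$ should instead be obtained by dual pairing (summation by parts against the jumps of $h_{j,m}$, using $h_{j,m}\in H^{\mu}_{r'}$ for $\mu<1/r'$ and $1-s<1/r'$), or by density of smooth functions in $H^s_r(I)$ together with continuity of both sides. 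Second, for the same reason the ``fundamental theorem of calculus'' applied to $g-P_{J+1}g$ must be read through the distributional primitive operator on $H^\tau_r(I)$ with $\tau$ possibly negative, which is precisely the boundedness statement you defer to the literature; once that is granted, unconditional convergence in $H^\sigma_r(I)$, $\sigma<s$, and in $\mathcal{C}(I)$ (choose $\sigma\in(\tfrac1r,s)$ and embed) follows as you say. With these repairs your proposal is a sound sketch of the standard proof.
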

This representation of $g$ using Faber functions is fundamental to calculate the coefficients for the Haar representation of $g'$, as we see below. 
\begin{remark}\label{rmk: haar faber link}
In the proof of the above theorem (see \cite[Theorem~3.1, Corollary~3.3]{Tri-bas}) the  following expansion for $g'\in H^{s-1}_ {r}(I)$ is derived 
\[
g'= (\bar \mu_1 - \bar \mu_0)h_0+ \sum_{j=0}^{+\infty}\sum_{m=0}^{2^j-1}2^{j+1}\bar \mu_{j,m}h_{j,m}.
\]
Comparing it with \eqref{eq: haar represent I} from  Theorem \ref{thm: haar I}, with $f=g'$,  we obtain an explicit representation of the coefficients $\tilde \mu_{j,m}$ and $\mu_0$ that appear in \eqref{eq: haar represent I}, that is
\begin{align}\label{eq: haar faber link}
& \mu_0 = \bar \mu_1 - \bar \mu_0 \qquad\text{and}\qquad \tilde \mu_{j,m} = 2^{j+1}\bar \mu_{j,m}. 
\end{align}
\end{remark}
The link expressed in \eqref{eq: haar faber link} together with the explicit expression \eqref{eq: faber coeff} is crucial to evaluate numerically the coefficients in the Haar expansion of an element $f\in H^{s-1}_ {r}(I)$ for $\tfrac12<s<1+\tfrac1r$ and $2\leq r <\infty$. Indeed to do so we only need to evaluate the associated function $g$ at (a finite number of) mesh points
\begin{equation}\label{eq: explicit repr}
\begin{cases}
\mu_0 &= g(1) - g(0)\\ 
\tilde \mu_{j,m} &= -2^j \left( g(\frac{m+1}{2^j}) -2  g(\frac{m+1/2}{2^j}) +  g(\frac{m}{2^j})\right).
\end{cases}
\end{equation} 

\subsection{Equivalent norms and coefficients of Haar series}\label{sc:coeffs}
For each $t\in[0,T]$ we have, from \cite[Theorem 2.9 and eq.\ (2.114), Sec.\ 2.2.3]{Tri-bas}, that the norms $\|b(t)\|_{ H^{-\beta}_{q}}$ and $\|\boldsymbol{\mu}(t;\beta,q)|f^-_{q,2}\|$ are equivalent, where
\begin{align}\label{eq:tribnorm}
\|\boldsymbol{\mu}(t;\beta,q)|f^-_{q,2}\|:=&\Big[\int_{\mathbb R}\Big( \sum_{j=-1}^\infty \sum_{m\in\Z}\big|\mu_{j,m}(t;\beta,q)2^{\frac{j}{q}}\mathds{1}_{[2^{-j}m,2^{-j}(m+1))}(x)\big|^2\Big)^{\frac{q}{2}}\di x\Big]^\frac{1}{q}.
\end{align}
We note that if $q=2$ the expression above simplifies to 
\begin{align}\label{eq:tribnorm2}
\|\boldsymbol{\mu}(t;\beta,2)|f^-_{2,2}\|=&\Big[\int_{\mathbb R}\Big( \sum_{j=-1}^\infty \sum_{m\in\Z}\big|\mu_{j,m}(t;\beta,2)\big|^2 2^{j}\mathds{1}_{[2^{-j}m,2^{-j}(m+1))}(x)\Big)\di x\Big]^\frac{1}{2}\\
=&\Big[ \sum_{j=-1}^\infty \sum_{m\in\Z}\big|\mu_{j,m}(t;\beta,2)\big|^2 2^{j}\int_{\mathbb R}\mathds{1}_{[2^{-j}m,2^{-j}(m+1))}(x)\di x\Big]^\frac{1}{2}\notag\\
=&\Big[ \sum_{j=-1}^\infty \sum_{m\in\Z}\big|\mu_{j,m}(t;\beta,2)\big|^2\Big]^{\frac{1}{2}},\notag
\end{align}
where we can swap the sums and the integral by monotone convergence.
Since the norms are equivalent for all $t\in[0,T]$ then 
\begin{align}\label{eq:equivn}
\|b\|_{\infty,H^{-\beta}_q}<\infty\iff\sup_{t\in[0,T]}\|\boldsymbol{\mu}(t;\beta,q)|f^-_{q,2}\|<\infty.
\end{align}

\section{Some estimates for the (killed) heat semigroup}\label{sc:P}

{\em Outline of the derivation of Eq.\ \eqref{eq:IZ1}}. Let us start from the first equation in \eqref{eq:IZ1}. To show that $P_t$ is a contraction on $H^{-s}_r$ for $s>0$ we write
\[
\|P_t f \|_{H^{-s}_r} =  \|A^{-s/2}P_t A^{s/2}A^{-s/2}f \|_{L^r} =  \|P_t A^{-s/2}f \|_{L^r},
\] 
where in the final equality we use that for $s>0$ the operators $A^{s/2}$ and $P_t$ commute by \cite[Theorem II.6.13]{pazy83}.
Since $A^{-s/2}f\in L^r(\R)$ and $P_t$ is a contraction on $L^r(\R)$ we get $\|P_t A^{-s/2} f \|_{L^r} \leq \|A^{-s/2} f \|_{L^r} = \|f\|_{H^{-s}_r}$, as needed. To prove that $P_t$ is a contraction for $H^{s}_r$ the idea is the same but we write $P_t f = A^{-s/2}A^{s/2} P_t f$.  

Let us now prove the second equation in \eqref{eq:IZ1}. First we recall that $\|P_t g -g \|_{L^r} \leq c\, t^{\eps/2} \|A^{\eps/2} g\|_{L^r}$ if $g\in D(A^{\eps/2})$, from \cite[Theorem II.6.13]{pazy83}.  For $f\in H^{-s}_r$ we choose $g=  A^{-(s+\eps)/2} f$ and we get 
\[
\|P_t f -f\|_{H^{-(s+\eps)}_r}  = \|A^{-(s+\eps)/2}(P_t f -f)\|_{L^r} =  \|(P_t A^{-(s+\eps)/2} f - A^{-(s+\eps)/2}f)\|_{L^r},
\] 
where in the final equality we use again that $A^{s/2}$ and $P_t$ commute.
Since $A^{-s/2}f \in L^r$ then $g=A^{-(s+\eps)/2}f \in H^\eps_r = D(A^{\eps/2})$ so we get
\[
\|(P_t A^{-(s+\eps)/2} f - A^{-(s+\eps)/2}f)\|_{L^r}  \leq c\, t^{\eps/2} \|A^{-s/2}f \|_{L^r} = c\, t^{\eps/2} \|f \|_{H^{-s}_r},
\]
as needed.
\medskip

{\em Outline of the derivation of Eq.\ \eqref{eq:IZ2}}. Let us start from the first equation in \eqref{eq:IZ2}. By \cite[Theorem II.6.13]{pazy83}, for $t\in(0,T)$, $\nu\geq0$ and $g\in L^r$ we have that $\|A^{\alpha/2 } P_t g\|_{L^r} \leq c \, t^{-\alpha/2} \|g\|_{L^r}$. Then choosing $\alpha = \nu+s$, $g = A^{-s/2} f$, and using again that $P_t$ and $A^{s/2}$ commute, we get  
\begin{align*}
\|P_t f\|_{H^\nu_r}  =& \| A^{\nu/2} P_t f\|_{L^r} = \| A^{(\nu+s)/2} P_t  A^{-s/2} f\|_{L^r}\\
 \leq& c\, t^{-(\nu+s)/2} \|A^{-s/2} f\|_{L^r} = c\,  t^{-(\nu+s)/2} \| f\|_{H^{-s}_r}.
\end{align*} 
Now if $\nu>1/r$ we can use fractional Morrey inequality and obtain 
\[
\|P_t f \|_{L^\infty} \leq \|P_t f \|_{C^{0,\alpha}} \leq \|P_t f\|_{H^\nu_r},
\]
by which we conclude. Similarly for the second bound in \eqref{eq:IZ2}, simply replace $\nu$ by $1+\nu$ and use  $\|\nabla P_t f \|_{L^\infty} \leq \|P_t f \|_{C^{1,\alpha}} \leq \|P_t f\|_{H^{1+\nu}_r}$ to  conclude.

\bibliographystyle{plain} 	
\bibliography{pre}

\begin{thebibliography}{10}

\bibitem{bally1996law}
V.~Bally and D.~Talay.
\newblock The law of the {E}uler scheme for stochastic differential equations.
\newblock {\em Probab.~Theory Related Fields}, 104(1):43--60, 1996.

\bibitem{Brox}
Th. Brox.
\newblock {A one-dimensional diffusion process in a Wiener medium}.
\newblock {\em Ann.\ Probab.}, 14(4):1206 -- 1218, 1986.

\bibitem{butkovsky2019}
O.~Butkovsky, K.~Dareiotis, and M.~Gerencs{\'e}r.
\newblock Approximation of {SDE}s -- a stochastic sewing approach.
\newblock {\em Probab. Theory Relat. Fields}, 2021.

\bibitem{cannizzaro2018}
G.~Cannizzaro and K.~Chouk.
\newblock Multidimensional sdes with singular drift and universal construction
  of the polymer measure with white noise potential.
\newblock {\em Ann.\ Probab.}, 46(3):1710--1763, 2018.

\bibitem{dareiotis2020}
K.~Dareiotis and M.~Gerencs{\'e}r.
\newblock On the regularisation of the noise for the {E}uler-{M}aruyama scheme
  with irregular drift.
\newblock {\em Electron.\ J.\ Probab.}, 25:1--18, 2020.

\bibitem{Delarue2016}
F.~Delarue and R.~Diel.
\newblock Rough paths and 1d {SDE} with a time dependent distributional drift:
  application to polymers.
\newblock {\em Probab.\ Theory Related Fields}, 165(1):1--63, 2016.

\bibitem{EMskew}
P.~\'Etoré and M.~Martinez.
\newblock {On the existence of a time inhomogeneous skew Brownian motion and
  some related laws}.
\newblock {\em Electron.\ J.\ Probab.}, 17:1 -- 27, 2012.

\bibitem{EMexact}
P.~\'Etoré and M.~Martinez.
\newblock Exact simulation of one-dimensional stochastic differential equations
  involving the local time at zero of the unknown process:.
\newblock {\em Monte Carlo Methods Appl.}, 19(1):41--71, 2013.

\bibitem{Fla-Iss-Rus-2017}
F.~Flandoli, E.~Issoglio, and F.~Russo.
\newblock Multidimensional stochastic differential equations with
  distributional drift.
\newblock {\em Trans.~Amer.~Math.~Soc.}, 369 (3):1655--1688, 2017.

\bibitem{FlandoliRussoWolf}
F.~Flandoli, F.~Russo, and J.~Wolf.
\newblock {Some SDEs with distributional drift Part I: General calculus}.
\newblock {\em Osaka J.\ Math.}, 40(2):493 -- 542, 2003.

\bibitem{FRW04}
F.\ Flandoli, F.\ Russo, and J.~Wolf.
\newblock Some {SDE}s with distributional drift. {II}. {L}yons-{Z}heng
  structure, {I}t\^{o}'s formula and semimartingale characterization.
\newblock {\em Random Oper.\ Stoch.\ Equ.}, 12(2):145--184, 2004.

\bibitem{Gyongy-Krylov-1996}
I.~Gy\"ongy and N.~Krylov.
\newblock Existence of strong solutions for {I}t\^o's stochastic equations via
  approximations.
\newblock {\em Probab. Theory Related Fields}, 105:143 -- 158, 1996.

\bibitem{Hal-Klo}
N.~Halidias and P.E. Kloeden.
\newblock A note on the {E}uler-{M}aruyama scheme for stochastic differential
  equations with a discontinuous monotone drift coefficient.
\newblock {\em BIT Numerical Mathematics}, 48:51–59, 2008.

\bibitem{harrison1981skew}
J.M.\ Harrison and L.A. Shepp.
\newblock On skew brownian motion.
\newblock {\em Ann.\ Probab.}, 9(2):309--313, 1981.

\bibitem{BroxLe}
Y.~Hu, K.~Lê, and L.~Mytnik.
\newblock {Stochastic differential equation for Brox diffusion}.
\newblock {\em Stoch.\ Process.\ Appl.}, 127(7):2281--2315, 2017.

\bibitem{Elena}
E.~Issoglio.
\newblock Transport equations with fractal noise: existence uniqueness and
  regularity of the solution.
\newblock {\em J.\ Anal.\ Appl.}, 32(1):37--53, 2013.

\bibitem{issoglio2019}
E.~Issoglio and S.~Jing.
\newblock Forward--backward {SDE}s with distributional coefficients.
\newblock {\em Stoch.\ Process.\ Appl.}, 130(1):47--78, 2020.

\bibitem{Iss-Rus-2200}
E.~Issoglio and F.~Russo.
\newblock A {F}eynman--{K}ac result via {M}arkov {BSDE}s with generalised
  drivers.
\newblock {\em Bernoulli}, 26(1):728--766, 2020.

\bibitem{issoglio2015}
E.~Issoglio and M.~Z{\"a}hle.
\newblock Regularity of the solutions to {SPDEs} in metric measure spaces.
\newblock {\em Stoch. Partial Differ. Equ. Anal. Comput.}, 3(2):272--289, 2015.

\bibitem{itomckean}
K.\ It\^{o} and H.P. McKean, Jr.
\newblock {\em Diffusion processes and their sample paths}.
\newblock Die Grundlehren der mathematischen Wissenschaften, Band 125. Academic
  Press, Inc., Publishers, New York; Springer-Verlag, Berlin-New York, 1965.

\bibitem{kloeden}
P.E. Kloeden and E.~Platen.
\newblock {\em Numerical solution of stochastic differential equations},
  volume~23.
\newblock Springer Science \& Business Media, 2013.

\bibitem{Koh-Lej-Kaz}
A.~Kohatsu-Higa, A.~Lejay, and K.~Yasuda.
\newblock Weak rate of convergence of the {E}uler-{M}aruyama scheme for
  stochastic differential equations with non-regular drift.
\newblock {\em J.\ Comput.\ Appl.\ Math.}, 326:138--158, 2016.

\bibitem{Kry-Ro}
N.~V. Krylov and M.~R{\"o}ckner.
\newblock Strong solutions of stochastic equations with singular time dependent
  drift.
\newblock {\em Probab. Theory Related Fields}, 131(2):154--196, 2005.

\bibitem{lejay}
A.~Lejay.
\newblock {On the constructions of the skew Brownian motion}.
\newblock {\em Probab.\ Surv.}, 3:413 -- 466, 2006.

\bibitem{lejayetal}
A.~Lejay, L.~Lenôtre, and G.~Pichot.
\newblock An exponential timestepping algorithm for diffusion with
  discontinuous coefficients.
\newblock {\em J.\ Comput.\ Phys.}, 396:888--904, 2019.

\bibitem{LM06}
A.~Lejay and M.~Martinez.
\newblock {A scheme for simulating one-dimensional diffusion processes with
  discontinuous coefficients}.
\newblock {\em Ann.\ Appl.\ Probab.}, 16(1):107 -- 139, 2006.

\bibitem{leobacher2017}
G.~Leobacher and M.~Sz\"{o}lgyenyi.
\newblock A strong order $1/2$ method for multidimensional {SDE}s with
  discontinuous drift.
\newblock {\em Ann.\ Appl.\ Probab.}, 27(4):2383--2418, 2017.

\bibitem{leobacher_et_al18}
G.~Leobacher and M.~Sz\"{o}lgyenyi.
\newblock Convergence of the {E}uler-{M}aruyama method for multidimensional
  {SDE}s with discontinuous drift and degenerate diffusion coefficient.
\newblock {\em Numer.\ Math.}, 138 (1):219--239, 2018.

\bibitem{pamen}
O.~{Menoukeu-Pamen} and D.~Taguchi.
\newblock {S}trong rate of convergence for the {E}uler-{M}aruyama approximation
  of {SDE}s with {H}\"older continuous drift coefficient.
\newblock {\em Stoch. Process. Appl.}, 127(8):2542--2559, 2017.

\bibitem{mikulevicius1991}
R.~Mikulevicius and E.~Platen.
\newblock Rate of convergence of the {E}uler approximation for diffusion
  processes.
\newblock {\em Math.\ Nachr.}, 151(1):233--239, 1991.

\bibitem{muller2018}
T.~M{\"u}ller-Gronbach and L.~Yaroslavtseva.
\newblock On the performance of the {E}uler--{M}aruyama scheme for {SDE}s with
  discontinuous drift coefficient.
\newblock {\em Ann.\ Inst.\ Henri Poincar{\'e} Probab.\ Stat.},
  56(2):1162--1178, 2020.

\bibitem{neuenkirch2019b}
A.~Neuenkirch and M.~Sz\"olgyenyi.
\newblock {The Euler–Maruyama scheme for SDEs with irregular drift:
  convergence rates via reduction to a quadrature problem}.
\newblock {\em IMA J.\ Numer.\ Anal.}, 41(2):1164--1196, 06 2020.

\bibitem{neuenkirch2019}
A.~Neuenkirch, M.~Sz\"olgyenyi, and L.~Szpruch.
\newblock An adaptive {E}uler-{M}aruyama scheme for stochastic differential
  equations with discontinuous drift and its convergence analysis.
\newblock {\em SIAM J.~Numer.~Anal.}, 57(1):378--403, 2019.

\bibitem{Ngo-Tag}
H.-L. Ngo and D.~Taguchi.
\newblock Strong rate of convergence for the {E}uler-{M}aruyama approximation
  of stochastic differential equations with irregular coefficients.
\newblock {\em Math.~Comp.}, 85:1793--1819, 2016.

\bibitem{ngo2017}
H.-L. Ngo and D.~Taguchi.
\newblock On the {E}uler-{M}aruyama approximation for one-dimensional
  stochastic differential equations with irregular coefficients.
\newblock {\em IMA J.~Numer.~Anal.}, 37(4):1864--1883, 2017.

\bibitem{pazy83}
A.~Pazy.
\newblock {\em Semigroups of linear operators and applications to partial
  differential equations}, volume~44 of {\em Applied Mathematical Sciences}.
\newblock Springer-Verlag, New York, 1983.

\bibitem{RussoTrutnau}
F.~Russo and G.~Trutnau.
\newblock {Some parabolic PDEs whose drift is an irregular random noise in
  space}.
\newblock {\em Ann.\ Probab.}, 35(6):2213 -- 2262, 2007.

\bibitem{stroock2007}
D.~W. Stroock and SR.S. Varadhan.
\newblock {\em Multidimensional diffusion processes}.
\newblock Springer, 2007.

\bibitem{triebel78}
H.~Triebel.
\newblock {\em Interpolation theory, function spaces, differential operators},
  volume~18 of {\em North-Holland Mathematical Library}.
\newblock North-Holland Publishing Co., Amsterdam, 1978.

\bibitem{Tri-fun1}
H.~Triebel.
\newblock {\em Theory of Function Spaces I}.
\newblock Birkhäuser Basel, 1983.

\bibitem{Tri-fun}
H.~Triebel.
\newblock {\em Theory of Function Spaces II}.
\newblock Birkhäuser Basel, 1992.

\bibitem{triebel97}
H.~Triebel.
\newblock {\em Fractals and Spectra}.
\newblock Birkhäuser, Basel, 1997.

\bibitem{Tri-bas}
H.~Triebel.
\newblock {\em Bases in function spaces, sampling, discrepancy, numerical
  integration}.
\newblock European Mathematical Society, 2010.

\bibitem{Tri-fab}
H.~Triebel.
\newblock {\em Faber systems and their use in sampling, discrepancy, numerical
  integration}.
\newblock European Mathematical Society, 2012.

\bibitem{Yan}
L.~Yan.
\newblock The {E}uler scheme with irregular coefficients.
\newblock {\em Ann.\ Probab.}, 30(3):1172--1194, 2002.

\end{thebibliography}

\end{document}